%% file: assignment_paper.tex
\documentclass{article}

\usepackage{setspace}

\input{include/header}
\input{include/symbols}

\allowdisplaybreaks

\title{Spectral analysis of the logit mapping and implications for stochastic user equilibrium algorithms}

\author[a]{Debojjal Bagchi\thanks{Corresponding author: Email: debojjalb@utexas.edu; Mobile: (737) 304-9684\\
Email addresses: debojjalb@utexas.edu (Debojjal Bagchi), sboyles@austin.utexas.edu (Stephen D. Boyles)}}
\author[a]{Stephen D. Boyles}

\affil[a]{\small Fariborz Maseeh Department of Civil, Architectural and Environmental Engineering, The University of Texas at Austin, 301 E. Dean Keeton St., Austin, TX, 78712, USA}
\date{ }
\begin{document}
 \maketitle
\vspace{-30pt}
\begin{abstract}
We analyze the Jacobian of the logit mapping for stochastic user equilibrium (SUE) and use it to develop two improved algorithms for path-based SUE. We show that the Jacobian decomposes into two matrices: one that annihilates differences of feasible path flow vectors, and another whose eigenvalues are all non-positive reals, provided link costs are monotone non-decreasing and separable.
Using these properties, we first show that the method of successive averages (MSA) with a small constant step-size $s$ converges linearly at a rate $1-s$, with the largest admissible step-size depending on the eigenvalues of the Jacobian of the logit mapping. Building on this result, we develop an adaptive constant step-size rule that retains the global convergence of MSA while achieving asymptotic linear convergence.
Our second algorithm is a Newton-based method using a reformulation of SUE as a root-finding problem. Unlike gradient-projection approaches that operate on the Hessian of the SUE objective function (a dense matrix), our method exploits the structure of the Jacobian of the logit mapping, making computations tractable and removing the need for manifold optimization. Numerical experiments show superlinear convergence on most tested networks, with our methods outperforming existing approaches on large networks or when demand is high. To our knowledge, this article is the first to report runtimes for logit-based SUE on networks as large as Chicago Regional and Philadelphia, providing a benchmark for future algorithmic development.
\vspace{3mm}
\Keywords{stochastic user equilibrium, logit choice model, method of successive averages, Newton's method, path-based assignment}
\end{abstract}

\section{Introduction}\label{sec:intro}

Many iterative methods for solving traffic assignment problems operate by computing a search direction from a feasible solution toward a ``target'' solution, and then take a step along this direction of a given size. In stochastic user equilibrium (SUE) with the logit path choice model, the target solution can be obtained from the current path flow solution through a continuous mapping, reviewed below, which we refer to as the ``logit mapping.'' In this paper, we present two algorithmic advances for path-based approaches to find SUE, both built on a spectral analysis of the Jacobian of the logit mapping. First, we design a step-size rule under which the method of successive averages (MSA) achieves asymptotic linear convergence while retaining global convergence. Second, we design a search direction that achieves quadratic convergence in a neighborhood of SUE. This search direction can be paired with any globally convergent algorithm and used adaptively once the path flow solution enters the neighborhood of quadratic convergence. Our numerical results show that these insights lead to very efficient algorithms to solve logit SUE problems.

MSA with a harmonic step-size sequence is a widely used algorithm for traffic assignment problems. Although MSA is globally convergent and popular due to its simplicity, it is known to suffer from slow, sublinear convergence. However, \cite{bargeraboyce06} showed that MSA can converge much faster when used with a constant step-size, provided the target solution varies smoothly in the current solution. Specifically, they showed that MSA with a small constant step-size converges asymptotically at a linear rate, with the rate depending on the largest eigenvalue of the Jacobian of the target mapping at the fixed point. The logit mapping is smooth, so this result should in principle hold for logit-based SUE. Yet recent studies rarely use a constant step-size in MSA for logit-based SUE. The main reason is a lack of clear guidance on when to switch to a constant step-size, and on how small that step-size must be to guarantee convergence~\citep{liu2009method}. We show that both these questions can be answered through a spectral analysis of the Jacobian of the logit mapping.

To this end, we compute the Jacobian of the logit mapping and show that it can be decomposed into a sum of two matrices. We will show that all of the eigenvalues of one of these matrices are real and non-positive, with the largest eigenvalue exactly zero and the smallest bounded from below, and that adding the other matrix has no impact on the convergence rate of MSA. Exploiting these properties, we prove that MSA converges linearly near equilibrium at a rate of $1-s$ for a small step-size $s$, independent of the eigenvalues of the Jacobian. This result complements \cite{bargeraboyce06}, who observed empirically that the rate of convergence near equilibrium was approximately $1-s$ in a different problem setting, suggesting that the specific values of the eigenvalues might not play a role in their specific problem. Our results show that this is exactly the case in the logit-based SUE setting. Furthermore, we derive an upper bound on the constant step-size that guarantees linear convergence close to equilibrium. Using these results, we then construct a step-size rule that steers any feasible starting solution into a region where linear convergence holds, while satisfying the conditions required for global convergence. We verify empirically that MSA with our proposed step-size rule converges asymptotically at rate $1-s$ on several real-world networks obtained from \cite{transportationNetworks}.

Although the proposed step-size rule achieves asymptotic linear convergence, the rate $1-s$ can become close to one for small $s$. Since a small step-size is required to ensure convergence, the resulting linear rate can itself be slow in practice. Faster path-based approaches for SUE, such as gradient projection, exploit second-order information through the Hessian of the SUE objective. However, as \cite{bekhor2005investigating} note, the Hessian of Fisk's SUE objective \citep{fisk1980some} is dense and impractical to invert, and its diagonal provides a poor descent direction. As a consequence, recent literature on second-order methods for SUE relies only on partial second-order information, such as diagonal approximations of the Hessian \citep{bekhor2005investigating} or secant approximations via Barzilai-Borwein (BB) step-sizes \citep{du2021faster}.

We will address these difficulties by reformulating SUE as a root-finding problem on the fixed-point system of equations and applying Newton's method to this system, exploiting the properties of the Jacobian of the logit mapping. Rather than working with the Hessian of the SUE objective, we update the current solution at each iteration by adding a ``step'' obtained from solving a linear system, which we refer to as the full Newton system. Our spectral analysis, however, reveals that this Newton system is singular and has infinitely many solutions. To resolve this issue, we will construct a reduced Newton system that provides the unique demand-feasible solution among all solutions of the full Newton system. We then show that close to equilibrium, taking the ``step'' from the reduced Newton system satisfies both demand conservation and non-negativity of path flows. As a result, manifold optimization is not needed near equilibrium with this approach, in contrast to most second-order path-based methods for SUE \citep{bekhor2005investigating}. To solve the reduced Newton system efficiently, we present an inexact Newton approach using a Krylov subspace method, and show that it also preserves feasibility and achieves quadratic convergence close to equilibrium.

We test the computational performance of our Newton-based approach and benchmark it against a recent quasi-Newton method using BB step-sizes \citep{du2021faster} on several real-world networks from \cite{transportationNetworks}. Since the Newton-based procedure is only locally convergent, we pair it with BB step-sizes and present a procedure to switch adaptively to Newton steps near equilibrium. We find that our Newton-based approach performs either similarly to or better than \cite{du2021faster}. Since \cite{du2021faster} presents one of the fastest known approaches for logit-based SUE, we conclude that our method is at the very least competitive with, and in some cases significantly faster than extant methods. Furthermore, our Newton-based method achieves tight convergence on networks as large as Chicago Regional and Philadelphia, even with inflated demand, within a modest computational budget. To the best of our knowledge, no prior work has reported logit-based SUE assignment results on instances of this size, and hence our results provide a benchmark for future algorithms to compare against.

The remainder of the paper is organized as follows. Section \ref{sec:literature} reviews relevant literature for SUE. Section \ref{sec:background} presents the logit route choice model and key mathematical properties of SUE required for developing the results in this paper. Section \ref{sec:3_Jacobian} computes the Jacobian of the logit mapping and summarizes its spectral properties. Section~\ref{sec:bound} proves the asymptotic linear convergence of MSA close to equilibrium and presents a step-size rule that is globally convergent while achieving asymptotic linear convergence. Section~\ref{sec:newton} presents an asymptotically quadratically convergent formulation for logit-based SUE. Finally, Section \ref{sec:results} presents the computational results, and Section \ref{sec:conc} concludes with recommendations and directions for future work.

\section{Literature review}\label{sec:literature}

Traffic assignment problems predict the routes travelers are likely to choose based on congestion caused by traffic flow in the network. These problems involve distributing the demand for every origin-destination (OD) pair in a network among a set of feasible paths. Traffic assignment problems can be broadly categorized into static and dynamic models, depending on whether they account for temporal variations in network conditions. A detailed discussion of these models can be found in \cite{blubook-vol1-v10}. Static and dynamic traffic assignment are both useful tools: dynamic assignment models offer greater traffic realism, whereas static models are faster to solve, have mathematical regularity properties~\citep{iryo2013properties}, and are more stable with respect to errors in the input data~\citep{boyles2019queue}. These latter points are reasons why static traffic assignment is still commonly used for long-range planning problems (where forecasting future demand is difficult) or in applications where many assignment runs are needed (model calibration, OD matrix estimation, or solving bi-level optimization problems such as network design or toll-setting). Clearly the greater realism of dynamic models is a more important consideration in many other applications; the two serve complementary roles as different tools for modeling, and research in both classes of models remains valuable. This article focuses entirely on static assignment.

Static traffic assignment models can be further divided into deterministic and stochastic models. Deterministic assignments assume that users have perfect knowledge of travel costs. Stochastic assignments relax this assumption \citep{daganzo1977stochastic} to model more realistic scenarios where travelers perceive travel costs with some degree of error. In stochastic traffic assignment, the utility of choosing a path is expressed as the sum of a deterministic component (typically a function of the path cost) and a random error term, and travelers choose the path with the highest perceived utility. A discrete choice model specifies the distribution from which these errors are drawn. The multinomial logit route choice model, obtained by assuming the errors are iid Gumbel random variates, is commonly used because it has an elegant mathematical representation amenable to computation, including representing equilibrium as the minimum of a strictly convex function~\citep{fisk1980some}.

Although the multinomial logit choice model is convenient mathematically, the iid assumption on the error terms can be strong, especially when paths share several links, as overlapping paths plausibly have correlated unobserved utilities. The probit model \citep{daganzo1977stochastic} assumes a multivariate normal distribution on the error terms, motivated by the central limit theorem, allowing for correlation between alternatives. However, it lacks a closed-form expression for path choice probabilities and is typically evaluated through simulation \citep{ben1999discrete}, which has limited its application in practice.

Several extensions of the logit model have also been proposed to account for correlation among overlapping paths while retaining analytical tractability. C-logit \citep{cascetta1996modified}, implicit availability/perception logit \citep{cascetta1999implicit}, and path-size logit \citep{ben1999discrete} retain the multinomial logit structure but capture path overlap through a ``commonality factor'' added to the deterministic part of the utility \citep{prashker2004route}. More general formulations come from the generalized extreme value family, such as the paired combinatorial logit \citep{chu1989paired} and the cross-nested logit \citep{vovsha1997cross}, which capture similarity between paths in the error component of the utility rather than the deterministic part. An even more general formulation is the mixed logit model \citep{mcfadden2000mixed}, in which the error term has two components: one drawn from a multivariate distribution (probit-like) and one iid Gumbel (logit-like). As with probit, path choice probabilities under mixed logit are typically computed via simulation. A comprehensive review of these route choice models can be found in \cite{prashker2004route}. Another class of logit-type choice models includes the recursive models introduced by \cite{fosgerau2013link}. These models are link-based and require no restrictions on the choice set; in fact, \cite{fosgerau2013link} show they are mathematically equivalent to a static multinomial logit model with infinitely many alternatives. These models formulate path choice as a Markov decision process in which links are states and a path is a sequence of link choices from an origin to a destination. Building on \cite{fosgerau2013link}, \cite{mai2015nested} proposed a nested recursive logit model that allows correlated utilities. A comprehensive review of several other recursive logit models can be found in \cite{zimmermann2020tutorial}.  Below, we focus only on the original multinomial logit model; broadening our analysis to these extensions of the logit model would be valuable but is beyond our present scope.



In addition to the distribution of unobserved utility, stochastic traffic assignment models vary based on the set of paths travelers consider.  Approaches to specifying this path set broadly fall into two categories: those that enumerate a finite set of paths, and those that operate over an implicit path set without explicitly generating it. Approaches in the first category include $k$-shortest paths \citep{brander1995comparative}, labelling-based paths in which shortest paths under different objectives (e.g., shortest, quickest, fuel-efficient) are combined \citep{ben1984modelling}, link penalty and elimination methods, and simulation-based path generation. Approaches in the second category include Dial's algorithm \citep{dial1971probabilistic}, which performs logit-based loading over an implicit acyclic path set consisting of paths along which a traveler monotonically moves away from the origin, called ``reasonable paths.'' \cite{bell1995alternatives} extends Dial's method to also allow cyclic paths. They propose two variants: the first considers a finite number of paths, and the second considers infinitely many paths in the presence of loops without enumerating them.  These approaches do not require explicit path enumeration, relying instead on a Markov property in which the choice of route from an intermediate node onward is independent of the route taken thus far. \cite{maher1997probit} performs an analogous loading under a probit choice model, again relying on the Markov assumption, without enumerating all paths. \cite{prashker2004route} provides a review of path sets used in conjunction with various route choice models.

MSA and its variants remain among the most widely used algorithms for stochastic traffic assignment, owing to their simplicity and broad applicability across network settings \citep{liu2009method}. MSA works by starting from an initial solution and iteratively updating the current solution as a convex combination of itself and a target solution. The weights in this combination are typically determined by a sequence of step-sizes. When the step-size follows a sequence decreasing to zero, such as $1/k$ at iteration $k$, MSA is known to suffer from slow tail convergence, leading to a sublinear convergence rate. 

To accelerate MSA, researchers often use ``adaptive averaging,'' where 
the step-size at each iteration is determined using information 
generated from previous iterations \citep{liu2009method}. Examples 
include the convex combination algorithm of \citet{chen1991algorithms}, 
the optimal step length algorithm of \citet{maher1998algorithms}, and 
the self-regulating scheme of \citet{liu2009method}. 
\citet{liu2009method} show empirically that their self-regulating 
variant converges significantly faster than standard MSA, although it 
remains sublinear. Their computational results also suggest that a 
hybrid strategy, starting with standard MSA and later switching to the 
optimal step length algorithm of \citet{maher1998algorithms}, can 
achieve linear convergence in practice. However, there is still no 
general way to determine exactly when to make that switch. \citet{bargeraboyce06} showed that if the target mapping is a smooth 
function of the current solution, then MSA with a small constant step 
size converges asymptotically at a linear rate governed by the dominant 
eigenvalue of the iteration matrix.  \citet{mounce2015convergence} further showed, that under certain assumptions, MSA with a constant step drives the iterates only into a 
neighborhood of equilibrium whose radius depends on the constant step-size. However, constant 
step-size rules in the context of logit-based SUE have not yet been explored in literature.

Deterministic models of traffic assignment have been studied extensively on networks of various scales, including large-scale networks such as Philadelphia and Chicago Regional \citep{transportationNetworks}, which contain tens of thousands of links. In contrast, research on stochastic traffic assignment has remained largely confined to smaller networks. Recent studies on SUE \citep{damberg1996algorithm, maher1998algorithms, chen1991algorithms, bekhor2005investigating, bekhor2007application, zhou2012c, xu2012path, chen2013self, yu2014solving, zhou2015modified, cantarella2015stochastic, xu2019improving, du2021faster, zhang2024distributed, wang2025enhancing} have primarily focused on modestly sized networks such as Sioux Falls, Winnipeg, and Chicago Sketch \citep{transportationNetworks}, which contain at most a few thousand nodes and links, far smaller than the networks used in deterministic traffic assignment studies.

Path-based approaches such as disaggregate simplicial decomposition \citep{damberg1996algorithm} and gradient projection \citep{bekhor2005investigating} form the basis of recent computationally fast approaches for logit-based SUE. Building on the SUE objective formulation of \citet{fisk1980some}, \citet{bekhor2005investigating} developed a gradient projection algorithm for logit-based SUE. Their algorithm works by projecting the gradient on the manifold of demand constraints, scaled by a diagonal approximation of the Hessian of Fisk's objective. More recently, \citet{du2021faster} used BB step-sizes, which incorporate second-order information inspired by quasi-Newton methods to accelerate convergence.  Their approach outperforms alternatives such as self-regulating averaging, and they report a runtime of approximately $20$ seconds on the Chicago Sketch network to reach a relative gap of $10^{-10}$ with a path set of approximately nine paths per OD pair. \citet{wang2025enhancing} report similar runtimes of around $50$ seconds on Chicago Sketch to reach the same relative gap using BB step-sizes within a gradient projection framework. In contrast, in the deterministic case, state-of-the-art algorithms (e.g., Algorithm B or TAPAS) achieve a similar gap on Chicago Sketch in only a few seconds. 

Such comparisons must be made carefully, as details of implementation, hardware, and convergence criteria play major roles in run times; but we nevertheless find the difference in reported run times striking, and believe there is value in developing faster algorithms for logit-based SUE.  Although the run time of a single assignment is rarely limiting, many applications involve repeated solution of assignment; as just a few examples, we give model calibration, including OD matrix estimation; Monte Carlo simulation to reflect uncertainty in parameters or future conditions; and bi-level optimization where assignment is a subproblem that may be solved thousands of times.  Any reduction of run time will immediately translate into direct improvements in such applications (e.g., additional calibration iterations or Monte Carlo samples within the same computational budget).

\section{Background}\label{sec:background}

Unlike deterministic user equilibrium, which assumes that all users have perfect knowledge of travel costs, SUE accounts for users' imperfect perception of travel costs. In SUE, users minimize their perceived travel costs between their origin and destination rather than the actual travel costs. If we assume the perception errors are independently and identically distributed according to a Gumbel distribution, the target path flows can be expressed as a closed-form equation that is continuous in the current path flows. The target path flows in this context refer to the distribution of flows across paths in an OD pair that users would choose given the current path costs.

We adopt the following notation conventions. Vector-valued functions are denoted using calligraphic letters (e.g., $\mathcal{Y}$), while scalar-valued functions are represented by lowercase Greek letters (e.g., $\gamma$). Sets are denoted using outlined Roman symbols (e.g., $\mathbb{Y}$). Vectors are represented by bold lowercase Roman letters (e.g., $\mb{y}$), and matrices are denoted by bold uppercase Roman letters (e.g., $\mb{Y}$). The $i^{\text{th}}$ component of a vector is written as $y_i$, and the $(i,j)^{\text{th}}$ component of a matrix is denoted by $Y_{ij}$. Different instances of a vector are distinguished using superscripts, such as $\mb{y}^{1}$ and $\mb{y}^{2}$. Further, throughout this paper, unless otherwise specified, $\|.\|$ denotes the $\ell_2$ vector norm and its induced matrix norm.   Table~\ref{tab:notation} lists the notation used in the paper, as detailed next.

The set of links in the network is denoted by $\Set{L}$, with each link indexed by $l$. The vector of link flows is denoted by $\mb{a}$, where $a_l$ represents the flow on link $l$. The set of OD pairs is denoted by $\Set{OD}$, and we write $m = \lvert \Set{OD} \rvert$ for the number of OD pairs. The demand between an origin $O$ and a destination $D$ is denoted by $d_{OD}$. For each OD pair, $\Set{P}_{OD}$ denotes the set of paths connecting $O$ to $D$, and $[\mb{h}]_{OD}$ denotes the vector of path flows for that OD pair. We assume each $\Set{P}_{OD}$ is finite, contains only acyclic paths, and does not change during the assignment procedure. For an OD pair indexed by $r$, we write $n_r = \lvert \Set{P}_{OD_r} \rvert$ for the number of paths in that OD pair. We denote by $\mb{1} \in \mathbb{R}^n$ and $\mb{1}_{OD} \in \mathbb{R}^{|\mathbb{P}_{OD}|}$ the all-ones vectors over all paths and over an OD pair, respectively, and by $\mb{I}$ the identity matrix. 

Many of the vectors and matrices that appear throughout this paper are naturally organized by OD pair. We assume an arbitrary but fixed ordering $OD_1, OD_2, \ldots, OD_m$ of the OD pairs, and within each OD pair an arbitrary but fixed ordering of the paths in that OD pair. We call a vector OD-blocked if its components are indexed by paths and grouped according to this fixed ordering, with the components indexed by $\mathbb{P}_{OD_1}$ appearing first, followed by those indexed by $\mathbb{P}_{OD_2}$, and so on. Similarly, we call a matrix OD-blocked if its rows and columns are indexed by paths and partitioned into blocks according to the same ordering. We organize matrices and vectors in an OD-blocked fashion whenever possible. For any OD-blocked vector $\mb{x}$, we write $[\mb{x}]_{OD}$ for its restriction to the components indexed by $\mathbb{P}_{OD}$. For any OD-blocked matrix $\mb{X}$, we write $[\mb{X}]_{OD}$ for its diagonal block corresponding to $\mathbb{P}_{OD}$, and $[\mb{X}]_{OD_1, OD_2}$ for the off-diagonal block whose rows are indexed by $\mathbb{P}_{OD_1}$ and whose columns are indexed by $\mathbb{P}_{OD_2}$.

We denote a path by $i$, its associated flow by $h_i$, and we let $OD(i)$ denote the OD pair to which path $i$ belongs. To distinguish paths within and across OD pairs, paths in the same OD pair are denoted using a prime, so $i$ and $i^\prime$ belong to the same OD pair, while $i$ and $j$ belong to different OD pairs. We denote the union of acyclic paths across all OD pairs by $\Set{P}$, with $n = \lvert \Set{P} \rvert = \sum_{r=1}^{m} n_r$ denoting the total number of paths. The vector $\mb{h} \in \mathbb{R}^{n}$ represents the ordered vector of all $[\mb{h}]_{OD}$ sub-vectors across OD pairs. A path flow vector $[\mb{h}]_{OD}$ is called feasible if each of its components is non-negative and they sum to $d_{OD}$, and the vector $\mb{h}$ is feasible if all its sub-vectors $[\mb{h}]_{OD}$ are feasible. The link-path incidence matrix $\mb{D} \in \{0,1\}^{|\mathbb{L}| \times n}$ is defined as
\begin{equation}
D_{li} =
\begin{cases}
1, & \text{if link } l \text{ is part of path } i, \\
0, & \text{otherwise}.
\end{cases}
\end{equation}

Given a feasible path flow vector $\mb{h}$, the corresponding path cost vector $\mb{c} = \mathcal{C}(\mb{h})$ is computed via network loading. The path flows are first aggregated into link flows by summing across all paths traversing each link. The cost on link $l$ is then evaluated using a link performance function $\tau_l(a_l)$. We assume the cost of each link depends only on the flow on that link. The cost of a path is the sum of the costs of all links it traverses. We further assume that each $\tau_l$ is twice continuously differentiable and monotone non-decreasing on $[0, \infty)$, with $\tau_l'$ Lipschitz continuous on the bounded feasible set of path flows. Suppose $\mathcal{T}$ denotes the vector-valued link cost mapping whose $l$-th component is $\tau_l(a_l)$. The path cost mapping then takes the composed form $\mathcal{C}(\mb{h}) = \mb{D}^T \mathcal{T}(\mb{D}\mb{h})$. The Jacobian of path costs with respect to path flows is given by the chain rule as $\mathcal{C}'(\mb{h}) = \mb{D}^T \mathcal{T}^{\prime}(\mb{a})\, \mb{D}$, where $\mathcal{T}^{\prime}(\mb{a}) := \diag(\tau_l'(a_l))$ is the diagonal matrix of marginal link costs. Given our assumption that the link cost functions are separable and monotone non-decreasing, $\mathcal{T}'(\mb{a})$ is positive semi-definite (PSD), and hence $\mathcal{C}'(\mb{h})$ is symmetric PSD at every feasible $\mb{h}$.

For a feasible path flow vector, the target path flows are obtained using the logit path choice model. Under this model, the target flow on path $i \in \Set{P}_{OD}$ is given by
\begin{equation}
    h_i = d_{OD} p_i = d_{OD} \left(\frac{\exp(-\theta c_i)}{\sum_{i^\prime \in \Set{P}_{OD(i)}} \exp(-\theta c_{i^\prime})} \right)\,,
    \label{eq:2_suepathflow}
\end{equation}
where $c_i$ is the cost associated with path $i$, $p_i$ is the probability of choosing path $i$ given the path cost vector $\mb{c}$, and $\theta > 0$ is a dispersion parameter. The parameter $\theta$ captures the sensitivity of travelers to differences in perceived path costs relative to their perception errors.

We denote the vector-valued logit probability mapping by $\mb{p} = \mathcal{P}(\mb{c}) \in \mathbb{R}^{n}$, whose components are the individual path probabilities $p_i$. This vector is composed of sub-vectors $[\mathcal{P}(\mb{c})]_{OD}$, each of which sums to one. We define $\mathcal{H}(\mb{h}, \mb{p})$ as the mapping that yields target path flows by multiplying the total OD demand (the sum of the components of $\mb{h}$ for each OD pair) by the corresponding probability vector. Composing these mappings, we write $\mathcal{L}(\mb{h}) := \mathcal{H}(\mb{h},\, \mathcal{P}(\mathcal{C}(\mb{h})))$ and refer to $\mathcal{L}$ as the logit mapping. As a composition of differentiable functions, $\mathcal{L}$ is itself differentiable.

The SUE path flow $\hat{\mb{h}}$ is defined as the solution to the fixed-point problem $\hat{\mb{h}} = \mathcal{L}(\hat{\mb{h}})$. It is well known that under mild conditions, this equilibrium exists, is unique, and can be characterized as the minimizer of a strictly convex function \citep{fisk1980some}. When an iterative procedure such as MSA is applied to compute the SUE, the procedure takes the form shown in Algorithm~\ref{alg:2_msa}. At each iteration $k$, a step-size $s_k \in (0,1]$ is chosen, and the current path flows are updated as a convex combination of the current and target flows. The target flows are then recomputed from the logit model applied to the updated path costs. The algorithm terminates when the current and target flows are sufficiently close for every OD pair, as measured by a gap function.

\begin{algorithm}
\caption{Method of successive averages}
\label{alg:2_msa}
\begin{algorithmic}[1]
\State \textbf{Initialization:} Set $\mb{h}^0$ to free-flow path flows.
\For{$k = 0,1,2,\dots$}
    \State \textbf{(a)} Choose a step-size $s_k \in (0,1]$.
    \State \textbf{(b)} Update path flows:
    \[
        \mb{h}^{k+1} \gets (1-s_k)\,\mb{h}^k + s_k\,\mathcal{L}(\mb{h}^k).
    \]
    \State \textbf{(c)} If the gap function evaluates to a sufficiently small value, \textbf{terminate}.
\EndFor
\end{algorithmic}
\end{algorithm}

\begin{table}
\centering
\small
\caption{Notation used throughout the paper.}
\label{tab:notation}
\begin{tabular}{@{}p{0.12\textwidth} p{0.16\textwidth} p{0.66\textwidth}@{}}
\toprule
\textbf{Symbol} & \textbf{Dimension} & \textbf{Description} \\
\midrule
\multicolumn{3}{@{}l}{\textit{Sets and indices}} \\
\midrule
$l \in \mathbb{L}$ & $|\mathbb{L}|$ & Links in the network \\
$OD \in \mathbb{OD}$ & $|\mathbb{OD}|$ & Origin-destination pair; indexed by $r, s$ when needed \\
$i, i' \in \mathbb{P}_{OD}$ & $|\mathbb{P}_{OD}|$ & Paths within the same OD pair ($j$ represents paths in a different OD pair)\\
$i \in \mathbb{P}$ & $|\mathbb{P}|=n$ & Path in the union of all acyclic paths \\
$\mathbb{U}$ & --- & Arbitrary neighborhood of $\hat{\mb{h}}$ \\
$k$ & --- & Iteration number of traffic assignment solver \\
\midrule
\multicolumn{3}{@{}l}{\textit{Scalars}} \\
\midrule
$d_{OD}$ & $\mathbb{R}_{+}$ & Demand for an OD pair \\
$\theta$ & $\mathbb{R}_{+}$ & Logit dispersion parameter \\
$s_k$ & $(0, 1]$ & MSA step-size in iteration $k$ \\
$g_k$ & $\mathbb{R}_{+}$ & Gap function value in iteration $k$ \\
$w_i$ & $\mathbb{R}$ & First-order derivative of the SUE objective with respect to $h_i$\\ 
$w_{\min}^{OD}$ & $\mathbb{R}$ & Minimum of $w_i$ over $i \in \Set{P}_{OD}$ \\
\midrule
\multicolumn{3}{@{}l}{\textit{Vectors}} \\
\midrule
$\mb{a}$ & $\mathbb{R}_{+}^{|\mathbb{L}|}$ & Link flow vector, components $a_l$ \\
$\mb{h}$, $[\mb{h}]_{OD}$ & $\mathbb{R}_{+}^{n}$, $\mathbb{R}_{+}^{|\mathbb{P}_{OD}|}$ & Path flow vector and OD sub-vector having components $h_i$ \\
$\mb{h}^k$, $\hat{\mb{h}}$ & $\mathbb{R}_{+}^{n}$ & Path flow iterate at iteration $k$ and SUE path flow\\
$\mb{c}$, $[\mb{c}]_{OD}$ & $\mathbb{R}_{+}^{n}$, $\mathbb{R}_{+}^{|\mathbb{P}_{OD}|}$ & Path cost vector and OD sub-vector having components $c_i$ \\
$\mb{p}$, $[\mb{p}]_{OD}$ & $(0,1)^{n}$, $(0,1)^{|\mathbb{P}_{OD}|}$ & Path probability vector and OD sub-vector having components $p_i$ \\
$\mb{0}$, $\mb{1}$, $\mb{1}_{OD}$ & $\mathbb{R}^n$, $\mathbb{R}^n$, $\mathbb{R}^{|\mathbb{P}_{OD}|}$ & Zero vector, and all-ones vectors over all paths and over an OD pair \\
$\mb{\bar{g}}$ & $\mathbb{R}_+^q$ & FIFO queue of gap values \\
$\boldsymbol{\delta}^k$ & $\mathbb{R}^n$ & Newton-like step at iteration $k$ \\
\midrule
\multicolumn{3}{@{}l}{\textit{Scalar-valued functions}} \\
\midrule
$\tau_l(a_l)$ & $\mathbb{R}_{+} \to \mathbb{R}_{+}$ & Link performance function on link $l$ \\
$\tau_l'(a_l)$ & $\mathbb{R}_{+} \to \mathbb{R}_{+}$ & Derivative of $\tau$ with respect to link flow \\
$OD(i)$ & $\mathbb{P} \to \mathbb{OD}$ & OD pair to which path $i$ belongs \\
\midrule
\multicolumn{3}{@{}l}{\textit{Vector-valued functions}} \\
\midrule
$\mathcal{C}(\mb{h})$ & $\mathbb{R}_{+}^{n} \to \mathbb{R}_{+}^{n}$ & Path cost mapping (network loading) \\
$\mathcal{P}(\mb{c})$ & $\mathbb{R}_{+}^{n} \to (0,1)^{n}$ & Logit path choice probability mapping \\
$\mathcal{H}(\mb{h}, \mb{p})$ & $\mathbb{R}_{+}^{n} \times (0,1)^{n} \to \mathbb{R}_{+}^{n}$ & Target path flow mapping \\
$\mathcal{L}(\mb{h})$ & $\mathbb{R}_{+}^{n} \to \mathbb{R}_{+}^{n}$ & Logit mapping defined as $\mathcal{H}(\mb{h}, \mathcal{P}(\mathcal{C}(\mb{h})))$\\
$\mathcal{F}(\mb{h})$ & $\mathbb{R}_{+}^{n} \to \mathbb{R}^{n}$ & Residual mapping defined as $\mathcal{L}(\mb{h}) - \mb{h}$ \\
$\mathcal{T}(\mb{a})$ & $\mathbb{R}_{+}^{|\mathbb{L}|} \to \mathbb{R}_{+}^{|\mathbb{L}|}$ & Mapping from link flows to link costs, $\mathcal{T}_{l}(\mb{a}) = \tau_l(a_l)$ \\
\midrule
\multicolumn{3}{@{}l}{\textit{Matrix-valued functions}} \\
\midrule
$\mathcal{C}'(\mb{h})$ & $\mathbb{R}_{+}^{n} \to \mathbb{R}^{n \times n}$ & Jacobian of path costs with respect to $\mb{h}$ \\
$\mathcal{P}'(\mb{c})$ & $\mathbb{R}_{+}^{n} \to \mathbb{R}^{n \times n}$ & Jacobian of path probabilities with respect to $\mb{c}$ \\
$\mathcal{H}'_{\mb{h}}(\mb{p})$ & $(0,1)^{n} \to \mathbb{R}^{n \times n}$ & Jacobian of $\mathcal{H}$ with respect to $\mb{h}$ ($\mb{p}$ fixed) \\
$\mathcal{H}'_{\mb{p}}(\mb{h})$ & $\mathbb{R}^{n}_+ \to \mathbb{R}^{n \times n}$ & Jacobian of $\mathcal{H}$ with respect to $\mb{p}$ ($\mb{h}$ fixed) \\
$\mathcal{L}'(\mb{h})$ & $\mathbb{R}_{+}^{n} \to \mathbb{R}^{n \times n}$ & Jacobian of the logit mapping with respect to $\mb{h}$\\
$\mathcal{F}'(\mb{h})$ & $\mathbb{R}_{+}^{n} \to \mathbb{R}^{n \times n}$ & Jacobian of $\mathcal{F}$ with respect to $\mb{h}$\\
$\mathcal{K}(\mb{h})$ & $\mathbb{R}_{+}^{n} \to \mathbb{R}^{n \times n}$ & Reduced Jacobian defined as $\mathcal{H}'_{\mb{p}}(\mb{h})\,\mathcal{P}'(\mb{c})\,\mathcal{C}'(\mb{h})$ \\
$\mathcal{T}^{\prime}(\mb{a})$ & $\mathbb{R}_{+}^{|\mathbb{L}|} \to \mathbb{R}^{|\mathbb{L}| \times |\mathbb{L}|}$ & Diagonal matrix of marginal link costs, $\mathcal{T}^{\prime}_{ll} = \tau'(a_l)$ \\\midrule
\multicolumn{3}{@{}l}{\textit{Matrices}} \\
\midrule
$\mb{D}$ & $\{0,1\}^{|\mathbb{L}| \times n}$ & Link-path incidence matrix \\
$\mb{I}$ & $\mathbb{R}^{n \times n}$ & Identity matrix \\
$\mb{J}$ & $\mathbb{R}^{n \times n}$ & Shorthand for $\mathcal{C}'(\mb{h})$ \\
$\mb{S}$ & $\mathbb{R}^{n \times n}$ & Shorthand for $-\mathcal{H}'_{\mb{p}}(\mb{h})\,\mathcal{P}'(\mb{c})$ \\
\bottomrule
\end{tabular}
\end{table}

\section{Spectral analysis of the logit mapping}\label{sec:3_Jacobian}

We start by analyzing the Jacobian of the logit mapping, focusing on its spectral properties. We first derive the Jacobian and summarize its key properties in Section~\ref{subsec:3_Jacobian}. We then decompose the Jacobian into two matrices and show that one term vanishes when multiplied by the difference of any two feasible path flow vectors. In Section~\ref{subsec:3_evals}, we characterize the eigenvalues of the remaining term and show that they are non-positive real numbers, with zero as an eigenvalue. We then use these spectral properties of the Jacobian in Section~\ref{sec:bound} to establish linear convergence of MSA with a constant step-size, and in Section~\ref{sec:newton} to derive a quadratically convergent traffic assignment algorithm for logit-SUE. We provide a worked example illustrating the construction of the Jacobian of the logit mapping and its properties on the Braess network in Appendix~\ref{app:braess_jacob}, for readers who prefer to follow the analysis along with a demonstration.

\subsection{Jacobian of the logit mapping}\label{subsec:3_Jacobian}

To construct the Jacobian $\mathcal{L}'(\mb{h}) \in \mathbb{R}^{n \times n}$, we need to compute the partial derivatives $\partial \mathcal{L}_i / \partial h_j$ for all pairs of paths $i, j \in \Set{P}$. To derive the Jacobian, we adopt the following notation. For any two OD pairs $OD_1$ and $OD_2$, we denote the number of paths in $OD_1$ as $n_1$ and the number of paths in $OD_2$ as $n_2$. Constructing the Jacobian requires some care, since perturbing a single path flow $h_j$ while holding all others fixed changes the total demand of the OD pair to which path $j$ belongs, but leaves the demands of all other OD pairs unchanged. To account for this, we express the mapping $\mathcal{L}$ in a more general form that allows each OD demand to vary, while preserving its value for demand-feasible solutions:
\begin{equation}
\mathcal{L}(\mb{h})_i = \left(\sum_{i^\prime \in \Set{P}_{OD(i)}} h_{i^\prime} \right)\, \mathcal{P}(\mathcal{C}(\mb{h}))_{i},
\label{eq:dem_h}
\end{equation}
where $OD(i)$ denotes the OD pair to which path $i$ belongs, and $\mathcal{P}(\mathcal{C}(\mb{h}))_{i}$ is the path choice probability of path $i$. Equation~\eqref{eq:dem_h} expresses each component $\mathcal{L}_i$ as the product of the demand of path $i$'s OD pair (obtained by summing path flows over all paths $i^\prime \in \Set{P}_{OD(i)}$) and the choice probability of path $i$. 

Because each partial derivative $\partial \mathcal{L}_i / \partial h_j$ behaves differently depending on whether paths $i$ and $j$ share an OD pair, the Jacobian admits a natural block structure as shown in Equation~\eqref{eq:jacobian_block}. We call the blocks $OD_1 = OD_2$ the ``within-OD blocks,'' and $OD_1 \neq OD_2$ the ``cross-OD blocks.''

\begin{equation}
\centering
\begin{tikzpicture}[scale=1.0, baseline=(current bounding box.center)]]
  \draw[thick] (0.0, 3.0) -- (-0.05, 3.0) -- (-0.05,-3.0) -- (0.0,-3.0);
  \draw[thick] (9.0, 3.0) -- ( 9.05, 3.0) -- ( 9.05,-3.0) -- (9.0,-3.0);
  \draw[dashed] (0.0, 1.0) -- (9.0, 1.0);
  \draw[dashed] (0.0,-1.0) -- (9.0,-1.0);
  \draw[dashed] (3.0, 3.0) -- (3.0,-3.0);
  \draw[dashed] (6.0, 3.0) -- (6.0,-3.0);
  \node[align=center] at (1.5, 2.6)
        {$\big[\mathcal{L}'\big]_{OD_1}$};
  \node[align=center] at (1.5, 2.1)
        {\footnotesize within-OD block};
  \node[align=center] at (1.5, 1.6)
        {\footnotesize $i, i^\prime \in \Set{P}_{OD_1}$};
  \node[align=center] at (4.5, 2.0)
        {$\cdots$};
  \node[align=center] at (7.5, 2.6)
        {$\big[\mathcal{L}'\big]_{OD_1,\, OD_m}$};
  \node[align=center] at (7.5, 2.1)
        {\footnotesize cross-OD block};
  \node[align=center] at (7.5, 1.6)
        {\footnotesize $i \in \Set{P}_{OD_1},\, j \in \Set{P}_{OD_m}$};
  \node[align=center] at (1.5, 0.0)
        {$\vdots$};
  \node[align=center] at (4.5, 0.0)
        {$\ddots$};
  \node[align=center] at (7.5, 0.0)
        {$\vdots$};
  \node[align=center] at (1.5,-2.0)
        {$\big[\mathcal{L}'\big]_{OD_m,\, OD_1}$};
  \node[align=center] at (4.5,-2.0)
        {$\cdots$};
  \node[align=center] at (7.5,-2.0)
        {$\big[\mathcal{L}'\big]_{OD_m}$};
  \node at (-1.0, 0.0) {$\mathcal{L}'(\mb{h}) \;=\;$};
  \node at (10, 0.0) {$\in \mathbb{R}^{n \times n}$};
\end{tikzpicture}
\label{eq:jacobian_block}
\end{equation}

Applying the chain rule, the Jacobian of $\mathcal{L}(\mb{h}) = \mathcal{H}(\mb{h},\, \mathcal{P}(\mathcal{C}(\mb{h})))$ with respect to $\mb{h}$ is
\begin{equation}
    \mathcal{L}'(\mb{h}) = \mathcal{H}'_{\mb{h}}(\mb{p}) + \mathcal{H}'_{\mb{p}}(\mb{h}) \, \mathcal{P}'(\mb{c}) \, \mathcal{C}'(\mb{h}), \label{eq:JacobianFull}
\end{equation}
where $\mathcal{H}'_{\mb{h}}(\mb{p})$ is the Jacobian of $\mathcal{H}$ with respect to path flows $\mb{h}$ (treating $\mb{p}$ as constant), $\mathcal{H}'_{\mb{p}}(\mb{h})$ is the Jacobian of $\mathcal{H}$ with respect to probabilities $\mb{p}$ (treating $\mb{h}$ as constant), $\mathcal{P}'(\mb{c})$ is the Jacobian of path probabilities with respect to path costs $\mb{c}$, and $\mathcal{C}'(\mb{h})$ is the Jacobian of path costs with respect to path flows $\mb{h}$. 

We analyze $\mathcal{L}'(\mb{h})$ block-wise to build the full Jacobian. In general, the $(OD_1, OD_2)$ block of $\mathcal{H}'_{\mb{h}}(\mb{p}) + \mathcal{H}'_{\mb{p}}(\mb{h}) \, \mathcal{P}'(\mb{c}) \, \mathcal{C}'(\mb{h})$ depends on every cross-OD block of each factor, since
\begin{equation}
    [\mathcal{H}'_{\mb{p}}(\mb{h}) \, \mathcal{P}'(\mb{c}) \, \mathcal{C}'(\mb{h})]_{OD_1, OD_2} = \sum_{OD_r,\, OD_s \in\, \mathbb{OD}} [\mathcal{H}'_{\mb{p}}(\mb{h})]_{OD_1, OD_r} \, [\mathcal{P}'(\mb{c})]_{OD_r, OD_s} \, [\mathcal{C}'(\mb{h})]_{OD_s, OD_2},
    \label{eq:blockproduct}
\end{equation}
so the within-OD and cross-OD blocks of $\mathcal{L}'(\mb{h})$ would not, in general, decouple from one another. However, both $\mathcal{H}'_{\mb{p}}(\mb{h})$ and $\mathcal{P}'(\mb{c})$ are in fact block-diagonal in the OD partition. The matrix $\mathcal{H}'_{\mb{p}}(\mb{h})$ is block-diagonal because, by Equation~\eqref{eq:dem_h}, $\mathcal{L}_i$ depends only on the probability $p_i$ of path $i$ itself, so $\partial \mathcal{H}_i / \partial p_j = 0$ whenever $j \notin \Set{P}_{OD(i)}$. The matrix $\mathcal{P}'(\mb{c})$ is block-diagonal because, under the logit model, the choice probability of a path within an OD pair depends only on the costs of paths within that same OD pair. Substituting this block-diagonal structure into Equation~\eqref{eq:blockproduct} collapses the double sum, and the $(OD_1, OD_2)$ block of the product reduces to
\begin{equation}
    [\mathcal{H}'_{\mb{p}}(\mb{h}) \, \mathcal{P}'(\mb{c}) \, \mathcal{C}'(\mb{h})]_{OD_1, OD_2} = [\mathcal{H}'_{\mb{p}}(\mb{h})]_{OD_1} \, [\mathcal{P}'(\mb{c})]_{OD_1} \, [\mathcal{C}'(\mb{h})]_{OD_1, OD_2}.
\end{equation}
This allows us to characterize $\mathcal{L}'(\mb{h})$ block by block. We now characterize each of these Jacobian matrices as block matrices organized by OD pairs.

The Jacobian of path costs with respect to path flows, $\mathcal{C}'(\mb{h}) \in \mathbb{R}^{n \times n}$, has entries
\begin{equation}
    \pdr{\mathcal{C}_i}{h_j} = \sum_{\substack{l \in \Set{L} \text{ common } \\ \text{ on paths } i \text{ and } j}} \tau_l'(a_l), \label{eq:3_codtderdef}
\end{equation}
for any pair of paths $i, j \in \Set{P}$ (whether or not they belong to the same OD pair), where $\tau_l(a_l)$ is the link cost of link $l$ with flow $a_l$.  We denote the within-OD block as $[\mathcal{C}'(\mb{h})]_{OD_1} \in \mathbb{R}^{n_1 \times n_1}$ and the cross-OD block as $[\mathcal{C}'(\mb{h})]_{OD_1, OD_2} \in \mathbb{R}^{n_1 \times n_2}$ with both having entries given by the same element-wise formula in Equation~\eqref{eq:3_codtderdef}. 

We now consider the within-OD blocks of $\mathcal{L}$. For paths $i, {i^\prime} \in \Set{P}_{OD_1}$, we examine how $\mathcal{L}$ changes with a change in flow within the same OD pair. In $\mathcal{H}'_{\mb{h}}(\mb{p})$ for this block (denoted by $[\mathcal{H}'_{\mb{h}}(\mb{p})]_{OD_1} \in \mathbb{R}^{n_1 \times n_1}$), we treat $\mb{p}$ as constant and find that $\partial \mathcal{H}_i / \partial h_{i^\prime} = p_i$, so each row of the within-OD block consists of the same value $p_i$. Similarly, the matrix $[\mathcal{H}'_{\mb{p}}(\mb{h})]_{OD_1} \in \mathbb{R}^{n_1 \times n_1}$ restricted to this block is diagonal with elements $\sum_{{i^\prime} \in \Set{P}_{OD_1}} h_{i^\prime} = d_{OD}$. The elements of $[\mathcal{P}'(\mb{c})]_{OD_1} \in \mathbb{R}^{n_1 \times n_1}$ within this block are the sensitivities of the logit model with respect to path costs. As is well-known \citep[e.g.,][]{koppelman2006self}, they are given by
\begin{equation}
    ([\mathcal{P}'(\mb{c})]_{OD})_{i{i^\prime}} = \begin{cases}
-\theta \cdot p_i \cdot (1 - p_i) & \text{if } i = {i^\prime} \in \Set{P}_{OD},\\
\theta \cdot p_i \cdot p_{i^\prime} & \text{if } i \neq {i^\prime} \in \Set{P}_{OD}.
\end{cases} \label{eq:pprime}
\end{equation}

Next, we consider the cross-OD blocks of $\mathcal{L}$. For $i \in \Set{P}_{OD_1}$ and $j \in \Set{P}_{OD_2}$ with $OD_1 \neq OD_2$, perturbing $h_j$ does not change the total demand in $OD_1$. Consequently, $\mathcal{H}'_{\mb{h}}$ does not contribute to the cross-OD blocks of $\mathcal{L}'(\mb{h})$, and
\begin{equation}
\{[\mathcal{L}'(\mb{h})]_{OD_1, OD_2}\}_{ij} = \left\{[\mathcal{H}'_{\mb{p}}(\mb{h})]_{OD_1} \, [\mathcal{P}'(\mb{c})]_{OD_1} \, [\mathcal{C}'(\mb{h})]_{OD_1, OD_2}\right\}_{ij}.
\label{eq:crossOD}
\end{equation}

The above block-by-block analysis reveals a useful structural property of $\mathcal{H}'_{\mb{h}}(\mb{p})$: although it is generally fully dense and nonzero, it annihilates differences of feasible path flow vectors. We state this formally in the next lemma.

\begin{lem}\label{lem:Hh_zero}
The product of $\mathcal{H}'_{\mb{h}}(\mb{p})$ with the difference of any two feasible path flow vectors is zero. That is, for any two feasible path flow vectors $\mb{h}^{1}, \mb{h}^{2} \in \mathbb{R}^{n}$,
\begin{equation}
\mathcal{H}'_{\mb{h}}(\mb{p})\,(\mb{h}^{1} - \mb{h}^{2}) = \mb{0}.
\end{equation}
\end{lem}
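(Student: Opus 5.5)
The plan is to write $\mathcal{H}'_{\mb{h}}(\mb{p})$ explicitly in block form and then check the product one OD block at a time. By Equation~\eqref{eq:dem_h}, $\mathcal{H}_i$ depends on $\mb{h}$ only through the OD demand $\sum_{i^\prime \in \Set{P}_{OD(i)}} h_{i^\prime}$. So with $\mb{p}$ held fixed, $\partial \mathcal{H}_i/\partial h_{i^\prime} = p_i$ for $i^\prime \in \Set{P}_{OD(i)}$, and $\partial \mathcal{H}_i/\partial h_j = 0$ for $j \notin \Set{P}_{OD(i)}$. As noted in the block-by-block analysis above, the cross-OD blocks of $\mathcal{H}'_{\mb{h}}(\mb{p})$ therefore vanish. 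Each within-OD block has every row constant, equal to $p_i$, so it is the rank-one outer product
\begin{equation*}
[\mathcal{H}'_{\mb{h}}(\mb{p})]_{OD} = [\mb{p}]_{OD}\,\mb{1}_{OD}^T .
\end{equation*}

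With this structure, the OD-blocked product reduces to one term per OD pair:
\begin{equation*}
[\mathcal{H}'_{\mb{h}}(\mb{p})\,(\mb{h}^{1}-\mb{h}^{2})]_{OD} = [\mb{p}]_{OD}\,\mb{1}_{OD}^T\big([\mb{h}^{1}]_{OD}-[\mb{h}^{2}]_{OD}\big) .
\end{equation*}
Feasibility of $\mb{h}^1$ and $\mb{h}^2$ gives $\mb{1}_{OD}^T[\mb{h}^{1}]_{OD} = \mb{1}_{OD}^T[\mb{h}^{2}]_{OD} = d_{OD}$, so the scalar factor is $d_{OD}-d_{OD}=0$. Every OD block of the product is therefore $\mb{0}$, and hence so is the whole product. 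Non-negativity of the flows is never used; only demand conservation matters.

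The only step that needs care is the cross-OD blocks. One has to confirm that $\mathcal{H}'_{\mb{h}}(\mb{p})$ is block-diagonal, meaning that perturbing $h_j$ in another OD pair does not affect $\mathcal{H}_i$ once $\mb{p}$ is frozen. This follows directly from the generalized form in Equation~\eqref{eq:dem_h}, where the demand factor sums only over $\Set{P}_{OD(i)}$. The dependence of $\mb{p}$ on $\mb{h}$ is deliberately excluded here, because it is carried by the second term $\mathcal{H}'_{\mb{p}}(\mb{h})\,\mathcal{P}'(\mb{c})\,\mathcal{C}'(\mb{h})$ of Equation~\eqref{eq:JacobianFull}. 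Once block-diagonality is established, the rest is the one-line computation above.
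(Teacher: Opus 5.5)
Your proposal is correct and follows essentially the same route as the paper: you use the block-diagonality of $\mathcal{H}'_{\mb{h}}(\mb{p})$ with constant-row within-OD blocks, and then demand conservation makes each OD block of the product vanish. Writing each block as the outer product $[\mb{p}]_{OD}\,\mb{1}_{OD}^T$ is just a compact restatement of the paper's componentwise computation $(\mathcal{H}'_{\mb{h}}(\mb{p})\,\mb{h}^{1})_i = p_i\, d_{OD}$, which gives the same value for $\mb{h}^{2}$.
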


\begin{proof}
We know $\mathcal{H}'_{\mb{h}}(\mb{p})$ has a block-diagonal structure: the blocks $[\mathcal{H}'_{\mb{h}}(\mb{p})]_{OD_r} \in \mathbb{R}^{n_r \times n_r}$ have each row consisting of the same value $p_i$, and the blocks ($[\mathcal{H}'_{\mb{h}}(\mb{p})]_{OD_r, OD_s}$) with $r\neq s$ are zero because perturbing $h_j$ in one OD pair does not change the demand of another. For any two feasible path flow vectors $\mb{h}^{1}, \mb{h}^{2} \in \mathbb{R}^{n}$, within each OD block the $i$-th component of the product satisfies
\begin{align}
    (\mathcal{H}'_{\mb{h}}(\mb{p}) \, \mb{h}^{1})_i
= \sum_{i^\prime \in \Set{P}_{OD(i)}} p_i \, h^{1}_{i^\prime}
= p_i \, d_{OD},
\end{align}
which is the same for $\mb{h}^{2}$. Hence, $\mathcal{H}'_{\mb{h}}(\mb{p})(\mb{h}^{1} - \mb{h}^{2}) = \mb{0}$. In other words, the first term in the Jacobian decomposition~\eqref{eq:JacobianFull} vanishes whenever it acts on the difference of two feasible path flow vectors.
\end{proof}

Lemma~\ref{lem:Hh_zero} has an important consequence for the structure of the Jacobian $\mathcal{L}'(\mb{h})$. The Jacobian $\mathcal{L}'(\mb{h})$ splits into two terms: the first involving $\mathcal{H}'_{\mb{h}}(\mb{p})$, which captures the dependence of target path flows on current path flows through the demand, and the second involving the chain $\mathcal{H}'_{\mb{p}}(\mb{h}) \cdot \mathcal{P}'(\mb{c}) \cdot \mathcal{C}'(\mb{h})$, which captures the dependence through the path choice probabilities. By Lemma~\ref{lem:Hh_zero}, the first term vanishes when applied to differences of feasible path flow vectors, so only the second term contributes to $\mathcal{L}'(\mb{h})(\mb{h}^1 - \mb{h}^2)$. This property motivates the following definition.

\begin{dfn}
The reduced Jacobian operator $\mathcal{K}(\mb{h}) \in \mathbb{R}^{n \times n}$ is defined as
\begin{equation}\label{eq:3_defK}
\mathcal{K}(\mb{h}) := \mathcal{H}'_{\mb{p}}(\mb{h}) \cdot \mathcal{P}'(\mb{c}) \cdot \mathcal{C}'(\mb{h}),
\end{equation}
which isolates the second term in~\eqref{eq:JacobianFull}.
\end{dfn}

When the Jacobian of the logit mapping $\mathcal{L}$ is applied to the difference of two feasible path flow vectors $\mb{h}^{1}$ and $\mb{h}^{2}$, Lemma~\ref{lem:Hh_zero} implies that
\begin{align}
    \mathcal{L}'(\mb{h})(\mb{h}^{1} - \mb{h}^{2}) = \mathcal{K}(\mb{h})(\mb{h}^{1} - \mb{h}^{2}). \label{eq:equalLK}
\end{align}

\subsection{Spectral analysis of the reduced Jacobian\label{subsec:3_evals}}

We now perform a spectral analysis of $\mathcal{K}(\mb{h})$. We can express the matrix $\mathcal{K}(\mb{h})$ using the block notation from Section~\ref{subsec:3_Jacobian}. We define two new matrices $\mb{J} := \mathcal{C}'(\mb{h})$ and $\mb{S} := -\mathcal{H}'_{\mb{p}}(\mb{h}) \, \mathcal{P}'(\mb{c})$ to help with the spectral analysis of $\mathcal{K}(\mb{h})$, and write $\mathcal{K}(\mb{h})$ as $-\mb{S} \mb{J}$. Because both $\mathcal{H}'_{\mb{p}}(\mb{h})$ and $\mathcal{P}'(\mb{c})$ are block-diagonal with respect to OD pairs, $\mb{S}$ is also block-diagonal. For any given OD pair, the within-OD block of $\mb{S}$ is
\begin{equation}
    [\mb{S}]_{OD} = -[\mathcal{H}'_{\mb{p}}(\mb{h})]_{OD} \, [\mathcal{P}'(\mb{c})]_{OD} = d_{OD} \theta \left( \diag([\mb{p}]_{OD}) - [\mb{p}]_{OD} ([\mb{p}]_{OD})^T \right), \label{eq:S}
\end{equation}
where $[\mb{p}]_{OD}$ is the column vector of path choice probabilities for that specific OD pair. The matrix $\mb{S}$ is symmetric, being the difference of two symmetric matrices. Furthermore, $\mb{J} = \mathcal{C}'(\mb{h})$ is symmetric by Equation~\eqref{eq:3_codtderdef}. This decomposition therefore expresses $\mathcal{K}(\mb{h})$ as the product of two symmetric matrices.

Our goal in this subsection is to characterize the spectrum of $\mathcal{K}(\mb{h})$. Our argument is distributed over three lemmas leading to the main result. In Lemma~\ref{lem:onesS} we show the vector of all-ones lies in the left null space of $\mb{S}$, which immediately yields zero as an eigenvalue of $\mathcal{K}(\mb{h})$. Then, in Lemma~\ref{lem:Spsd}, we show the matrix $\mb{S}$ is PSD, and therefore admits a symmetric square root $\mb{S}^{1/2}$. Finally, in Lemma~\ref{lem:Snorm} we show the spectral norm of $\mb{S}$ is bounded above by $\theta \max d_{OD}$. With these three lemmas in hand, we prove the main spectral result of this paper, which establishes that all eigenvalues of $\mathcal{K}(\mb{h})$ are real and non-positive, that zero is the maximum eigenvalue, and that the minimum eigenvalue is bounded below in terms of the maximum OD demand and the spectral norm of $\mathcal{C}'(\mb{h})$.

\begin{lem}\label{lem:onesS}
The all-ones vector $\mb{1} \in \mathbb{R}^n$ satisfies $\mb{1}^T \mb{S} = \mb{0}^T$. Consequently, $0$ is an eigenvalue of $\mathcal{K}(\mb{h})$.
\end{lem}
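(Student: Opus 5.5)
The plan is to check the claim block by block, then pass from a left null vector of $\mb{S}$ to a zero eigenvalue of $\mathcal{K}(\mb{h}) = -\mb{S}\mb{J}$.

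\textbf{Step 1: $\mb{1}^T\mb{S}=\mb{0}^T$.} Since $\mb{S}$ is block-diagonal in the OD partition, $\mb{1}^T\mb{S}=\mb{0}^T$ holds if and only if $\mb{1}_{OD}^T[\mb{S}]_{OD}=\mb{0}^T$ for every OD pair. Fix an OD pair and use the explicit form~\eqref{eq:S}. We have
\[
\mb{1}_{OD}^T \diag([\mb{p}]_{OD}) = ([\mb{p}]_{OD})^T
\]
and
\[
\mb{1}_{OD}^T [\mb{p}]_{OD}([\mb{p}]_{OD})^T = \Big(\textstyle\sum_{i\in\Set{P}_{OD}} p_i\Big)([\mb{p}]_{OD})^T = ([\mb{p}]_{OD})^T,
\]
because the logit probabilities within an OD pair sum to one. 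The two terms therefore cancel, giving $\mb{1}_{OD}^T[\mb{S}]_{OD}=\mb{0}^T$. Stacking over OD pairs yields $\mb{1}^T\mb{S}=\mb{0}^T$. Equivalently, since $\mb{S}$ is symmetric, $\mb{S}\mb{1}=\mb{0}$.

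\textbf{Step 2: $0$ is an eigenvalue of $\mathcal{K}(\mb{h})$.} From Step 1,
\[
\mb{1}^T\mathcal{K}(\mb{h}) = -\mb{1}^T\mb{S}\mb{J} = \mb{0}^T,
\]
and $\mb{1}\neq\mb{0}$. So $\mb{1}$ is a left eigenvector of $\mathcal{K}(\mb{h})$ with eigenvalue $0$. Consequently $\mathcal{K}(\mb{h})$ is singular, i.e.\ $\det\mathcal{K}(\mb{h})=0$. Since the eigenvalues of a matrix and its transpose coincide, $0$ is an eigenvalue of $\mathcal{K}(\mb{h})$ itself.

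Note that I do not expect $\mb{1}$ to be a right eigenvector, because $\mb{J}\mb{1}$ need not vanish. This is why the argument goes through the left null space and the transpose: a direct right-null-vector argument would fail.

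The only point needing care is the use of $\sum_{i\in\Set{P}_{OD}} p_i = 1$, which holds by the definition of $\mathcal{P}$. Everything else is routine.
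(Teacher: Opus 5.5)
Your proposal is correct and follows essentially the same route as the paper. You verify $\mb{1}_{OD}^T[\mb{S}]_{OD}=\mb{0}^T$ blockwise using $\sum_{i\in\Set{P}_{OD}}p_i=1$, stack over OD pairs, and then read off $\mb{1}^T\mathcal{K}(\mb{h})=-(\mb{1}^T\mb{S})\mb{J}=\mb{0}^T$, which makes $\mb{1}$ a left eigenvector for eigenvalue $0$; your explicit transpose remark simply spells out the last step.
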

\begin{proof}
Since Equation~\eqref{eq:S} holds for each OD block, and the path choice probabilities sum to one, the vector of all ones $\mb{1}_{OD} \in \mathbb{R}^{|\mathbb{P}_{OD}|}$ satisfies
\begin{align}
    \mb{1}_{OD}^T [\mb{S}]_{OD} &= d_{OD} \theta \left( \mb{1}_{OD}^T \diag([\mb{p}]_{OD}) - \mb{1}_{OD}^T [\mb{p}]_{OD} ([\mb{p}]_{OD})^T \right)\\& = d_{OD} \theta \left( ([\mb{p}]_{OD})^T - (1)([\mb{p}]_{OD})^T \right) = \mb{0}^T. \label{eq:odDS}
\end{align}
Equation~\eqref{eq:odDS} holds for every block along the diagonal, so the all-ones vector $\mb{1} \in \mathbb{R}^n$ satisfies $\mb{1}^T \mb{S} = \mb{0}^T$. Consequently, $\mb{1}^T \mathcal{K}(\mb{h}) = -(\mb{1}^T \mb{S}) \mb{J} = \mb{0}^T$, and hence $0$ is an eigenvalue of $\mathcal{K}(\mb{h})$.
\end{proof}

To analyze the remaining eigenvalues, we first establish that $\mb{S}$ is PSD. This ensures that a real symmetric square root $\mb{S}^{1/2}$ exists, which allows us to compare the eigenvalues of $\mathcal{K}(\mb{h}) = -\mb{S}\mb{J}$ with those of $-\mb{S}^{1/2}\mb{J}\mb{S}^{1/2}$. This is helpful because, although $-\mb{S}\mb{J}$ is not necessarily symmetric, $-\mb{S}^{1/2}\mb{J}\mb{S}^{1/2}$ is, allowing us to invoke the spectral theorem for real symmetric matrices.

\begin{lem}\label{lem:Spsd}
The matrix $\mb{S}$ is symmetric PSD, and therefore possesses a symmetric square root $\mb{S}^{1/2}$.
\end{lem}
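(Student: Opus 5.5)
The plan is to reduce everything to a single OD block, since $\mb{S}$ is block-diagonal in the OD partition and a block-diagonal symmetric matrix is PSD if and only if each diagonal block is PSD. Symmetry of $\mb{S}$ has already been noted after Equation~\eqref{eq:S}, so what remains is to show that each block $[\mb{S}]_{OD} = d_{OD}\theta\,(\diag([\mb{p}]_{OD}) - [\mb{p}]_{OD}[\mb{p}]_{OD}^T)$ is PSD. Because $d_{OD}\theta \ge 0$, it suffices to show that $\mb{M} := \diag(\mb{p}) - \mb{p}\mb{p}^T$ is PSD for any probability vector $\mb{p}$ with positive entries summing to one.

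For an arbitrary $\mb{x}$ we compute the quadratic form
\[
\mb{x}^T\mb{M}\mb{x} = \sum_i p_i x_i^2 - \Big(\sum_i p_i x_i\Big)^2 .
\]
This is the variance of a random variable taking value $x_i$ with probability $p_i$, so it is non-negative. Equivalently, Jensen's inequality or Cauchy--Schwarz applied to $\sqrt{p_i}\cdot\sqrt{p_i}x_i$ gives $(\sum_i p_i x_i)^2 \le (\sum_i p_i)(\sum_i p_i x_i^2) = \sum_i p_i x_i^2$, using $\sum_i p_i = 1$. One could also write the form explicitly as $\tfrac12\sum_{i,i'} p_i p_{i'}(x_i - x_{i'})^2 \ge 0$. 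Summing the block quadratic forms, each scaled by $d_{OD}\theta\ge 0$, gives $\mb{x}^T\mb{S}\mb{x}\ge 0$ for all $\mb{x}\in\mathbb{R}^n$.

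Finally, since $\mb{S}$ is real symmetric PSD, the spectral theorem gives $\mb{S} = \mb{Q}\boldsymbol{\Lambda}\mb{Q}^T$ with $\boldsymbol{\Lambda}$ diagonal and non-negative. Setting $\mb{S}^{1/2} := \mb{Q}\boldsymbol{\Lambda}^{1/2}\mb{Q}^T$ yields a symmetric PSD matrix whose square is $\mb{S}$.

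There is no real obstacle here. The only step that needs care is recognizing the variance identity (Cauchy--Schwarz), which relies on the within-OD probabilities summing to one. The paper defines $d_{OD}$ as a demand, so $d_{OD}\ge 0$ and the nonnegativity of $d_{OD}\theta$ holds.
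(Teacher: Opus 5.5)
Your proposal is correct and follows essentially the same route as the paper: reduce to the OD blocks via block-diagonality, identify each block's quadratic form $\sum_i p_i x_i^2 - \bigl(\sum_i p_i x_i\bigr)^2$ as a variance, and use $d_{OD}\theta \ge 0$. Your Cauchy--Schwarz and pairwise-difference checks and the explicit spectral construction of $\mb{S}^{1/2}$ are harmless additions to the same argument.
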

\begin{proof}
Since $\mb{S}$ is block-diagonal, it is PSD if and only if each block $[\mb{S}]_{OD}$ is PSD. For any real vector $\mb{x} \in \mathbb{R}^{|\mathbb{P}_{OD}|}$ we evaluate the quadratic form:
\begin{align}
    \mb{x}^T [\mb{S}]_{OD} \mb{x} &= d_{OD} \theta \, \mb{x}^T \left( \diag([\mb{p}]_{OD}) - [\mb{p}]_{OD} ([\mb{p}]_{OD})^T \right) \mb{x}  \\
    &= d_{OD} \theta \left( \sum_{i \in \Set{P}_{OD}} p_i x_i^2 - \left(\sum_{i \in \Set{P}_{OD}} p_i x_i\right)^2 \right).
\end{align}
Because $[\mb{p}]_{OD}$ represents a valid probability distribution (i.e., $p_i \ge 0$ and sums to unity), this difference is guaranteed to be non-negative as it represents the variance of a random variable taking values $x_i$ with probabilities $p_i$. Because $d_{OD} \ge 0$ and $\theta > 0$, $\mb{x}^T [\mb{S}]_{OD} \mb{x} \ge 0$, so every block of $\mb{S}$ is PSD. Hence, $\mb{S}$ is symmetric PSD and therefore possesses a symmetric square root $\mb{S}^{1/2}$.
\end{proof}

The next lemma bounds the spectral norm of $\mb{S}$, which will be used to derive the lower bound on the minimum eigenvalue of $\mathcal{K}(\mb{h})$.

\begin{lem}\label{lem:Snorm}
The spectral norm of $\mb{S}$ satisfies $\left\|\mb{S}\right\| \le \theta  \left( \max\limits_{OD} d_{OD} \right)$.
\end{lem}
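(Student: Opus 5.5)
Because $\mb{S}$ is block-diagonal with respect to the OD partition, its spectral norm is the maximum of the spectral norms of its diagonal blocks. The plan is therefore to bound each block separately by $\theta d_{OD}$ and then take the maximum over OD pairs. By Lemma~\ref{lem:Spsd}, each block $[\mb{S}]_{OD}$ is symmetric PSD. Its spectral norm is thus its largest eigenvalue, which equals $\max_{\|\mb{x}\|=1} \mb{x}^T [\mb{S}]_{OD}\mb{x}$ by the Rayleigh quotient characterization.

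For a unit vector $\mb{x} \in \mathbb{R}^{|\mathbb{P}_{OD}|}$, I will reuse the quadratic form computed in the proof of Lemma~\ref{lem:Spsd}:
\begin{equation*}
\mb{x}^T [\mb{S}]_{OD}\mb{x} = d_{OD}\theta\left(\sum_{i} p_i x_i^2 - \Bigl(\sum_i p_i x_i\Bigr)^2\right) \le d_{OD}\theta \sum_i p_i x_i^2 .
\end{equation*}
The inequality holds because we drop a nonnegative square. Since $0 \le p_i \le 1$, we have $\sum_i p_i x_i^2 \le \max_i p_i \sum_i x_i^2 \le \|\mb{x}\|^2 = 1$. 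This gives $\|[\mb{S}]_{OD}\| \le \theta d_{OD}$.

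To finish, let $\mb{x}$ be an arbitrary vector in $\mathbb{R}^n$, partitioned into OD sub-vectors. Then $\|\mb{S}\mb{x}\|^2 = \sum_{OD} \|[\mb{S}]_{OD}[\mb{x}]_{OD}\|^2 \le \sum_{OD} \theta^2 d_{OD}^2 \|[\mb{x}]_{OD}\|^2 \le \theta^2 (\max_{OD} d_{OD})^2 \|\mb{x}\|^2$, which is the claim.

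None of these steps should pose a real obstacle. The only point needing care is the justification that the norm of a PSD block equals its maximal Rayleigh quotient. An alternative route is the Gershgorin circle theorem: the row $i$ of a block has diagonal entry $d_{OD}\theta p_i(1-p_i)$ and off-diagonal absolute sum $d_{OD}\theta p_i(1-p_i)$, so every eigenvalue lies in $[0, 2d_{OD}\theta p_i(1-p_i)] \subseteq [0, d_{OD}\theta/2]$. That would even give a sharper constant, but the variance argument above suffices for the stated bound.
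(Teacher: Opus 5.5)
Your proposal is correct and follows essentially the same argument as the paper: reduce to the OD blocks, write the Rayleigh quotient of $[\mb{S}]_{OD}$ as a variance, drop the squared mean, and bound $\sum_i p_i x_i^2$ by $1$ for a unit vector $\mb{x}$. The differences are minor: you bound that sum by $\max_i p_i$ where the paper uses $\max_i x_i^2$, you verify the block-diagonal norm step explicitly where the paper only asserts it, and your Gershgorin aside is also correct, sharpening the constant to $\theta(\max_{OD} d_{OD})/2$.
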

\begin{proof}
Since $\mb{S}$ is a symmetric PSD matrix, its spectral norm is given by
\begin{equation}
    \left\|\mb{S}\right\| = \max_{\|\mb{x}\| = 1} \mb{x}^T \mb{S} \mb{x}. \label{eq:specnormdef}
\end{equation}
We know that $\mb{S}$ is block-diagonal with respect to OD pairs. So, its spectral norm is the maximum of the norms of its blocks. From Equation~\eqref{eq:S}, the within-OD block is
\begin{equation}
    [\mb{S}]_{OD} = d_{OD} \theta \left( \diag([\mb{p}]_{OD}) - [\mb{p}]_{OD} ([\mb{p}]_{OD})^T \right).
\end{equation}
Given that $[\mb{p}]_{OD}$ is a probability vector, for any unit vector $\mb{x}$,
\begin{align}
    \mb{x}^T \left( \diag([\mb{p}]_{OD}) - [\mb{p}]_{OD} ([\mb{p}]_{OD})^T \right) \mb{x} &= 
    \sum_{i \in \mathbb{P}_{OD}} p_i x_i^2 - \left(\sum_{i \in \mathbb{P}_{OD}} p_i x_i\right)^2 \\ &\leq \sum_{i \in \mathbb{P}_{OD}} p_i x_i^2 \leq \max_{i \in \mathbb{P}_{OD}} x_i^2 \leq \|\mb{x}\|^2 = 1.
\end{align}
Applying~\eqref{eq:specnormdef} to the PSD matrix $\diag([\mb{p}]_{OD}) - [\mb{p}]_{OD}([\mb{p}]_{OD})^T$ gives $\left\|\diag([\mb{p}]_{OD}) - [\mb{p}]_{OD}([\mb{p}]_{OD})^T\right\| \le 1$, hence $\left\|[\mb{S}]_{OD}\right\| \le d_{OD} \theta$. Taking the maximum over OD pairs, $\left\|\mb{S}\right\| \le \theta (\max d_{OD})$.
\end{proof}
We are now ready to prove the main spectral result on the reduced Jacobian of the logit mapping.

\begin{thm}\label{thm:spectrum} The eigenvalues of $\mathcal{K}(\mb{h})$ have the following properties:
\begin{enumerate}[itemsep=0pt, topsep=0pt]
    \item All eigenvalues of $\mathcal{K}(\mb{h})$ are real and non-positive.
    \item The maximum eigenvalue is zero.
    \item The minimum eigenvalue, $\lambda_{\min}$, is bounded below by $-\theta \left( \max\limits_{OD} d_{OD} \right) \|\mathcal{C}'(\mb{h})\|$.
\end{enumerate}
\end{thm}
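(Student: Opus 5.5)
The plan is to reduce the spectrum of $\mathcal{K}(\mb{h}) = -\mb{S}\mb{J}$ to that of the symmetric matrix $-\mb{S}^{1/2}\mb{J}\mb{S}^{1/2}$. The tool is the standard fact that for square matrices $\mb{A}$ and $\mb{B}$, the products $\mb{A}\mb{B}$ and $\mb{B}\mb{A}$ have the same characteristic polynomial, and hence the same eigenvalues with algebraic multiplicity. By Lemma~\ref{lem:Spsd}, $\mb{S}$ has a symmetric PSD square root $\mb{S}^{1/2}$. Taking $\mb{A} = \mb{S}^{1/2}$ and $\mb{B} = \mb{S}^{1/2}\mb{J}$ gives
\begin{equation*}
\mb{S}\mb{J} = \mb{A}\mb{B}, \qquad \mb{B}\mb{A} = \mb{S}^{1/2}\mb{J}\mb{S}^{1/2}.
\end{equation*}
Therefore $\mathcal{K}(\mb{h})$ and $-\mb{S}^{1/2}\mb{J}\mb{S}^{1/2}$ have identical spectra. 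Since $\mb{J} = \mathcal{C}'(\mb{h})$ is symmetric, so is $\mb{S}^{1/2}\mb{J}\mb{S}^{1/2}$, and the spectral theorem shows its eigenvalues are real. This gives the reality claim in item 1.

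For non-positivity, I will use the fact, noted in Section~\ref{sec:background}, that $\mb{J} = \mb{D}^T\mathcal{T}'(\mb{a})\mb{D}$ is PSD under the separability and monotonicity assumptions. Then for any $\mb{x}$,
\begin{equation*}
\mb{x}^T \mb{S}^{1/2}\mb{J}\mb{S}^{1/2}\mb{x} = (\mb{S}^{1/2}\mb{x})^T \mb{J} (\mb{S}^{1/2}\mb{x}) \ge 0,
\end{equation*}
so $\mb{S}^{1/2}\mb{J}\mb{S}^{1/2}$ is PSD. Its negative therefore has all eigenvalues $\le 0$, and by the spectral equivalence so does $\mathcal{K}(\mb{h})$. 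Item 2 then follows directly: Lemma~\ref{lem:onesS} shows $0$ is an eigenvalue, and all eigenvalues are non-positive, so $0$ is the maximum.

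For item 3, I will bound the largest eigenvalue of the PSD matrix $\mb{S}^{1/2}\mb{J}\mb{S}^{1/2}$ by its spectral norm and use submultiplicativity:
\begin{equation*}
\|\mb{S}^{1/2}\mb{J}\mb{S}^{1/2}\| \le \|\mb{S}^{1/2}\|^2\,\|\mb{J}\| = \|\mb{S}\|\,\|\mb{J}\|.
\end{equation*}
The equality $\|\mb{S}^{1/2}\|^2 = \|\mb{S}\|$ holds because the eigenvalues of $\mb{S}^{1/2}$ are the square roots of those of $\mb{S}$. Lemma~\ref{lem:Snorm} gives $\|\mb{S}\| \le \theta \max_{OD} d_{OD}$. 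Combining these, $\lambda_{\min}(\mathcal{K}(\mb{h})) = -\lambda_{\max}(\mb{S}^{1/2}\mb{J}\mb{S}^{1/2}) \ge -\theta(\max_{OD} d_{OD})\|\mathcal{C}'(\mb{h})\|$.

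The step that needs the most care is the spectral transfer between $-\mb{S}\mb{J}$ and $-\mb{S}^{1/2}\mb{J}\mb{S}^{1/2}$, because $\mb{S}$ is singular (Lemma~\ref{lem:onesS}). A similarity transformation by $\mb{S}^{1/2}$ is therefore unavailable, and I will rely instead on the $\mb{A}\mb{B}$ versus $\mb{B}\mb{A}$ characteristic-polynomial identity. That identity holds for arbitrary square matrices, for example by a continuity argument or the Schur complement determinant identity, so no invertibility is needed. The theorem asserts only that the eigenvalues are real, not that $\mathcal{K}(\mb{h})$ is diagonalizable, so matching characteristic polynomials is enough for all three items.
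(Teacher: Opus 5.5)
Your proof is correct and follows the paper's route. You transfer the spectrum of $-\mb{S}\mb{J}$ to the symmetric matrix $-\mb{S}^{1/2}\mb{J}\mb{S}^{1/2}$ via the $\mb{A}\mb{B}$ versus $\mb{B}\mb{A}$ identity with $\mb{A}=\mb{S}^{1/2}$, $\mb{B}=\mb{S}^{1/2}\mb{J}$; you use positive semidefiniteness of $\mb{J}$ for non-positivity, Lemma~\ref{lem:onesS} for the zero eigenvalue, and $\|\mb{S}^{1/2}\|^2\|\mb{J}\|=\|\mb{S}\|\|\mb{J}\|$ with Lemma~\ref{lem:Snorm} for the lower bound. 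The one minor difference is that you use the full characteristic-polynomial version of the identity, so zero and multiplicities transfer too, whereas the paper uses only the nonzero-eigenvalue version and gets zero separately from Lemma~\ref{lem:onesS}; this is a harmless and slightly cleaner refinement.
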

\begin{proof}
By Lemma~\ref{lem:onesS}, zero is an eigenvalue of $\mathcal{K}(\mb{h})$. We now show that every other eigenvalue is real and non-positive.

Let $\lambda$ be any non-zero eigenvalue of $\mathcal{K}(\mb{h}) = -\mb{S} \mb{J}$. The non-zero eigenvalues of $-\mb{S} \mb{J}$ are identical to those of the matrix $-\mb{S}^{1/2} \mb{J} \mb{S}^{1/2}$. (This follows from the standard linear algebra property that the matrix products $\mb{A}\mb{B}$ and $\mb{B}\mb{A}$ share the same non-zero eigenvalues. Letting $\mb{A} = \mb{S}^{1/2}$ and $\mb{B} = \mb{S}^{1/2} \mb{J}$ yields $\mb{A}\mb{B} = \mb{S}\mb{J}$ and $\mb{B}\mb{A} = \mb{S}^{1/2} \mb{J} \mb{S}^{1/2}$.) The existence of real symmetric $\mb{S}^{1/2}$ is guaranteed by Lemma~\ref{lem:Spsd}. 

From Equation~\eqref{eq:3_codtderdef}, the Jacobian $\mb{J}$ is symmetric. Consequently, the matrix $\mb{S}^{1/2} \mb{J} \mb{S}^{1/2}$ is a real symmetric matrix, since $(\mb{S}^{1/2} \mb{J} \mb{S}^{1/2})^T = \mb{S}^{1/2} \mb{J}^T \mb{S}^{1/2} = \mb{S}^{1/2} \mb{J} \mb{S}^{1/2}$. By the spectral theorem, all its eigenvalues are real. Let $\mb{w} \in \mathbb{R}^n$ be an eigenvector with eigenvalue $\lambda \in \mathbb{R}$ such that
\begin{equation}
    -\mb{S}^{1/2} \mb{J} \mb{S}^{1/2} \mb{w} = \lambda \mb{w}.
\end{equation}
Pre-multiplying by $\mb{w}^T$ yields $-\mb{w}^T \mb{S}^{1/2} \mb{J} \mb{S}^{1/2} \mb{w} = \lambda \|\mb{w}\|^2$. Dividing both sides by the positive scalar $\|\mb{w}\|^2$, we isolate $\lambda$:
\begin{equation}
    \lambda = -\frac{1}{\|\mb{w}\|^2} \mb{w}^T \mb{S}^{1/2} \mb{J} \mb{S}^{1/2} \mb{w}. \label{eq:bigbiglamda}
\end{equation}
To simplify this quadratic form, we define $\mb{u} = \mb{S}^{1/2} \mb{w} \in \mathbb{R}^n$. Substituting into Equation~\eqref{eq:bigbiglamda} gives
\begin{equation}
    \lambda = -\frac{1}{\|\mb{w}\|^2} \mb{u}^T \mb{J} \mb{u}.
\end{equation}

Under our assumptions, each link cost function $\tau_l(a_l)$ is monotone non-decreasing and the cost of each link depends only on the flow on that link. Consequently, $\mb{J} = \mathcal{C}'(\mb{h}) = \mb{D}^T \mathcal{T}'(\mb{a})\, \mb{D}$ is symmetric PSD, and hence $\mb{u}^T \mb{J} \mb{u} \ge 0$ for any vector $\mb{u}$. It immediately follows that $\lambda \le 0$. Combined with Lemma~\ref{lem:onesS}, this establishes properties 1 and 2 of the theorem.

It remains to establish the lower bound on $\lambda_{\min}$ of $\mathcal{K}(\mb{h}) = -\mb{S}\mb{J}$. Since $-\mb{S}\mb{J}$ and $-\mb{S}^{1/2}\mb{J}\mb{S}^{1/2}$ share the same nonzero eigenvalues, $-\lambda_{\min}$ is the maximum eigenvalue of $\mb{S}^{1/2}\mb{J}\mb{S}^{1/2}$. Because $\mb{S}^{1/2} \mb{J} \mb{S}^{1/2}$ is a real symmetric PSD matrix, its spectral norm equals its largest eigenvalue, hence $-\lambda_{\min} = \|\mb{S}^{1/2} \mb{J} \mb{S}^{1/2}\|$. By submultiplicativity of the spectral norm and the identity $\|\mb{S}^{1/2}\|^2 = \|\mb{S}\|$ for PSD matrices,
\begin{equation}
    -\lambda_{\min} = \left\|\mb{S}^{1/2} \mb{J} \mb{S}^{1/2}\right\| \le \|\mb{S}^{1/2}\| \, \|\mb{J}\| \, \|\mb{S}^{1/2}\| = \|\mb{S}\| \, \|\mb{J}\|. \label{eq:lbeval}
\end{equation}

Applying Lemma~\ref{lem:Snorm} to bound $\|\mb{S}\|$, we obtain
\begin{equation}\label{eq:lambda_min_bound}
    \lambda_{\min} \ge - \, \theta \left( \max_{OD} d_{OD} \right) \|\mathcal{C}'(\mb{h})\|,
\end{equation}
which establishes property 3.
\end{proof}

\section{Convergence analysis of MSA in logit-based SUE}\label{sec:bound}

With the spectral properties of the Jacobian of the logit mapping in hand, we establish linear convergence of MSA with a constant step-size. We first establish the local convergence rate of MSA for logit-based SUE in Theorem~\ref{thm:msa_convergence}, following an approach similar to \cite{bargeraboyce06}. We show that, near equilibrium, MSA with a small constant step-size $s$ converges at a linear rate of $1 - s$, independent of the Jacobian. We also derive an upper bound $s_g$ such that any step-size $s < s_g$ guarantees linear convergence in a neighborhood of the SUE solution. Based on these results, we then propose a step-size rule in Algorithm~\ref{alg:4_stepsize} that ensures asymptotic linear convergence of MSA starting from any feasible solution. The method starts with a large step-size and reduces it adaptively, ensuring it ultimately becomes lower than $s_g$ close to SUE. We then establish convergence of this adaptive scheme through Lemma~\ref{lem:finite_resets} and Theorem~\ref{thm:global_convergence}.

\begin{thm}[Asymptotic linear convergence of MSA for logit-based SUE]\label{thm:msa_convergence}
Let $\hat{\mb{h}}$ denote the logit-based SUE path flow solution, and let $\{\mb{h}^k\}$ be the sequence of iterates generated by the MSA update
\begin{equation*}
    \mb{h}^{k+1} = (1-s)\mb{h}^k + s\mathcal{L}(\mb{h}^k),
\end{equation*}
with constant step-size $s \in (0,1)$. Assume the link cost functions are monotone non-decreasing. Then MSA converges linearly to $\hat{\mb{h}}$ for any step-size satisfying
\begin{equation}\label{eq:step_size_b}
    0 < s < \frac{2}{2 - \lambda_{\min}} =: s_g,
\end{equation}
where $\lambda_{\min}$ is the smallest (most negative) eigenvalue of $\mathcal{K}(\hat{\mb{h}})$. Moreover, applying the bounds on $\lambda_{\min}$ derived in Theorem~\ref{thm:spectrum}, convergence is guaranteed whenever
\begin{equation}\label{eq:step_size_c}
    0 < s < \frac{2}{2 + \theta \left( \max\limits_{OD} d_{OD} \right) \|\mathcal{C}'(\hat{\mb{h}})\|} \leq s_g.
\end{equation}
Under these conditions, the asymptotic rate of convergence of MSA is
\begin{equation}\label{eq:conv_rate_def}
    r^*(s) = 1 - s.
\end{equation}
\end{thm}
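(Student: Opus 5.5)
The plan is to linearize the MSA iteration map $\Phi(\mb{h}) := (1-s)\mb{h} + s\mathcal{L}(\mb{h})$ around the fixed point $\hat{\mb{h}}$. We then restrict attention to the affine subspace of demand-feasible vectors, where Lemma~\ref{lem:Hh_zero} lets us replace $\mathcal{L}'$ by $\mathcal{K}$.

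Write the error as $\mb{e}^k := \mb{h}^k - \hat{\mb{h}}$. Every iterate is a convex combination of feasible vectors, so it is feasible, and hence $\mb{e}^k$ lies in the subspace $\mathbb{V} := \{\mb{x} : \mb{1}_{OD}^T[\mb{x}]_{OD} = 0 \ \forall OD\}$. Since $\mathcal{L}$ is $C^1$ with locally Lipschitz derivative (from the assumptions on $\tau_l$), Taylor's theorem together with \eqref{eq:equalLK} gives
\begin{equation*}
\mb{e}^{k+1} = \bigl((1-s)\mb{I} + s\,\mathcal{K}(\hat{\mb{h}})\bigr)\mb{e}^k + O(\|\mb{e}^k\|^2).
\end{equation*}
By Lemma~\ref{lem:onesS} applied blockwise ($\mb{1}_{OD}^T[\mb{S}]_{OD} = \mb{0}^T$ and $\mb{S}$ is block-diagonal), $\mathcal{K}(\hat{\mb{h}})$ maps $\mathbb{R}^n$ into $\mathbb{V}$. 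So $\mb{M} := (1-s)\mb{I} + s\mathcal{K}(\hat{\mb{h}})$ leaves $\mathbb{V}$ invariant, and the local rate is governed by the spectral radius of $\mb{M}|_{\mathbb{V}}$.

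\textbf{Eigenvalues of $\mb{M}$.} Each eigenvalue of $\mb{M}$ has the form $\mu = 1 - s + s\lambda$, where $\lambda\in[\lambda_{\min},0]$ is a real eigenvalue of $\mathcal{K}(\hat{\mb{h}})$ by Theorem~\ref{thm:spectrum}. Hence $\mu \le 1-s < 1$ always holds. The condition $\mu > -1$ for all $\lambda$ is equivalent to $s(2-\lambda_{\min}) < 2$, which is exactly \eqref{eq:step_size_b}.

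\textbf{Diagonalizability.} For the rate to be exactly $1-s$ rather than $1-s+\epsilon$, I would show that $\mathcal{K} = -\mb{S}\mb{J}$ is diagonalizable. It is similar to a symmetric matrix on $\operatorname{range}(\mb{S})$: take $\mb{S}^{1/2}$ and use the inner product induced by $\mb{S}^{+}$ on $\operatorname{range}(\mb{S})$. This range equals $\mathbb{V}$ whenever all logit probabilities are positive, which holds at SUE. In that inner product, $\mb{S}\mb{J}$ is self-adjoint, because $\langle \mb{S}\mb{J}\mb{x},\mb{y}\rangle_{\mb{S}^+} = \mb{x}^T\mb{J}\mb{y}$ for $\mb{x},\mb{y}\in\mathbb{V}$. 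Thus $\mb{M}|_{\mathbb{V}}$ is self-adjoint, its induced norm equals its spectral radius $\rho = \max(1-s,\; s - 1 - s\lambda_{\min}\cdot(-1))$, and the quadratic remainder is absorbed by a standard Ostrowski-type argument. This yields local linear convergence in $\mathbb{U}$.

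\textbf{Rate and bound.} For small $s$ the largest $|\mu|$ is attained at $\lambda=0$, giving $r^*(s)=1-s$. More precisely this holds whenever $1-s \ge |1-s+s\lambda_{\min}|$, i.e.\ $s\le 2/(2-\lambda_{\min})$. Note that the eigenvalue $0$ of $\mathcal{K}$ does occur on $\mathbb{V}$ generically: $\mathcal{K}$ restricted to $\mathbb{V}$ is singular whenever $\mb{J}$ has a nontrivial kernel there. Otherwise the rate is $1-s+s\lambda_{\max}^{\mathbb{V}}$, which is still at most $1-s$. Finally, \eqref{eq:step_size_c} follows by substituting the bound $-\lambda_{\min}\le\theta(\max d_{OD})\|\mathcal{C}'(\hat{\mb{h}})\|$ from Theorem~\ref{thm:spectrum}; the map $x\mapsto 2/(2+x)$ is decreasing, so this gives a smaller admissible threshold.

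\textbf{Main obstacle.} I expect the delicate step to be the restriction to $\mathbb{V}$ together with the non-symmetry of $\mathcal{K}$. The full Jacobian $\mathcal{L}'$ also has $\mathcal{H}'_{\mb{h}}$ components outside $\mathbb{V}$. We need both the invariance of $\mathbb{V}$ and the $\mb{S}^+$-self-adjointness to convert spectral information into an actual norm contraction. A second subtlety is whether the eigenvalue $0$ is actually realized on $\mathbb{V}$. If it is not, the asymptotic rate can only be claimed as an upper bound $1-s$, with equality when $\mb{J}$ is singular on $\mathbb{V}$, as is typical when there are more paths than links.
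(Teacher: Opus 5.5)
Your argument is correct, and it is sharper than the paper's. The paper uses the same skeleton. It linearizes the MSA map at $\hat{\mb{h}}$ and replaces $\mathcal{L}'(\hat{\mb{h}})$ by $\mathcal{K}(\hat{\mb{h}})$ via Lemma~\ref{lem:Hh_zero}. It then places the spectrum of $(1-s)\mb{I}+s\mathcal{K}(\hat{\mb{h}})$ in $[1-s+s\lambda_{\min},\,1-s]$ using Theorem~\ref{thm:spectrum}, and reads off \eqref{eq:step_size_b} from $|1-s+s\lambda_{\min}|<1-s$. However, it takes the spectral radius over all $n$ eigenvalues of $\mathcal{K}(\hat{\mb{h}})$ and uses the zero eigenvalue from Lemma~\ref{lem:onesS} to conclude $r^*(s)=1-s$. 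It never restricts to the subspace $\mathbb{V}$ where the errors live, and it never addresses the non-normality of $\mathcal{K}$.

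Your $\mb{S}^{+}$-self-adjointness on $\mathbb{V}$ handles the non-normality and gives a genuine one-step contraction in a fixed norm. This is not strictly needed for an asymptotic root rate, where a norm within $\varepsilon$ of the spectral radius suffices, but it makes the Ostrowski step clean.

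Your caveat about the zero eigenvalue is the more important difference, and it is right. Because $\mathcal{K}(\hat{\mb{h}})$ maps $\mathbb{R}^n$ into $\mathbb{V}$, the zero eigenvalues supplied by Lemma~\ref{lem:onesS} belong to the induced map on $\mathbb{R}^n/\mathbb{V}$ and say nothing about errors in $\mathbb{V}$. Zero is an eigenvalue of $\mathcal{K}(\hat{\mb{h}})$ restricted to $\mathbb{V}$ exactly when $\ker\mathcal{C}'(\hat{\mb{h}})\cap\mathbb{V}\neq\{\mb{0}\}$. Indeed, if $\mb{S}\mb{J}\mb{z}=\mb{0}$ with $\mb{z}\in\mathbb{V}$, then $\mb{J}\mb{z}\in\ker\mb{S}=\mathbb{V}^{\perp}$, so $\mb{z}^T\mb{J}\mb{z}=0$ and hence $\mb{J}\mb{z}=\mb{0}$. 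A dimension count guarantees this condition when $n>|\mathbb{L}|+m$, which holds in every network the paper tests, but it fails in general.

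The Braess example of Appendix~\ref{app:braess_jacob} is a case where it fails. There $\ker\mb{J}$ is spanned by $(1,1,-1)^T\notin\mathbb{V}$, so the reported zero eigenvalue is the quotient one. At equilibrium the eigenvalues on $\mathbb{V}$ are $-6p_1\approx-1.58$ and $-6p_1p_3\approx-0.75$. For small $s$ this gives a rate of roughly $1-1.75s$, not $1-s$. So what you prove, $r^*(s)\le 1-s$ with equality under the kernel condition, is the correct form of \eqref{eq:conv_rate_def}. Do not try to force the unconditional equality.

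A few slips to fix:
\begin{itemize}
\item The condition $1-s+s\lambda_{\min}>-1$ is equivalent to $s(1-\lambda_{\min})<2$, not to $s(2-\lambda_{\min})<2$. The latter is the stronger condition $1-s+s\lambda_{\min}>-(1-s)$, which you state correctly in your ``Rate and bound'' step. Since it implies the former, nothing breaks once you write an implication instead of an equivalence.
\item The second entry of your $\rho$ should be $|1-s+s\lambda_{\min}|$, which equals $s-1-s\lambda_{\min}$ when that is positive. It is not $s-1+s\lambda_{\min}$.
\item When zero is not an eigenvalue on $\mathbb{V}$, the rate is $\max\bigl(1-s+s\lambda_{\max}^{\mathbb{V}},\,|1-s+s\lambda_{\min}|\bigr)$, not the first term alone. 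Both terms are still below $1-s$ for $s<s_g$.
\item The kernel condition is structural (enough path redundancy relative to links and OD pairs), not generic, so drop ``generically''.
\end{itemize}
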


\begin{proof}
We define the error vector $\mb{e}^k = \mb{h}^k - \hat{\mb{h}}$ at iteration $k$ of MSA. Linearizing the logit mapping near the equilibrium $\hat{\mb{h}}$ gives $\mathcal{L}(\mb{h}^k) - \mathcal{L}(\hat{\mb{h}}) \approx \mathcal{L}'(\hat{\mb{h}})\mb{e}^k$. The MSA update then yields
\begin{equation}\label{eq:err_evol}
    \mb{e}^{k+1} \approx \bigl((1-s)\mb{I} + s\mathcal{L}'(\hat{\mb{h}})\bigr)\mb{e}^k.
\end{equation}
Since $\mb{e}^k$ is the difference of two feasible path flow vectors, Lemma~\ref{lem:Hh_zero} implies $\mathcal{H}'_{\mb{h}}(\hat{\mb{p}})\mb{e}^k = \mb{0}$, so the contribution of $\mathcal{H}'_{\mb{h}}$ vanishes and the linearization~\eqref{eq:err_evol} simplifies to
\begin{equation}\label{eq:error_evolution}
    \mb{e}^{k+1} \approx \bigl((1-s)\mb{I} + s\mathcal{K}(\hat{\mb{h}})\bigr)\mb{e}^k.
\end{equation}
This is a linear iteration whose convergence is governed by the spectral radius of the iteration matrix. The asymptotic rate of convergence is therefore
\begin{equation*}
    r^*(s) = \max_i |1 - s + s\lambda_i|,
\end{equation*}
where $\lambda_1, \ldots, \lambda_n$ are the eigenvalues of $\mathcal{K}(\hat{\mb{h}})$. By the results of Section~\ref{subsec:3_evals}, all eigenvalues of $\mathcal{K}(\hat{\mb{h}})$ are real and non-positive, and zero is an eigenvalue, so $\max_i \lambda_i = 0$.

For convergence at rate $1-s$, the spectral radius must satisfy $\left|1 - s + s\lambda_i\right| < 1-s$ for all eigenvalues. This condition is most restrictive at the most negative eigenvalue, requiring $s-1 < 1 - s + s\lambda_{\min}$, which yields the step-size bound \eqref{eq:step_size_b}. Substituting the bound on $\lambda_{\min}$ from Theorem~\ref{thm:spectrum} produces the more conservative but explicit bound \eqref{eq:step_size_c}.

Finally, since $\max_i \lambda_i = 0$, for any $s$ satisfying \eqref{eq:step_size_b} the spectral radius is attained at the zero eigenvalue, giving $r^*(s) = \left|1 - s + s \cdot 0\right| = 1 - s$.
\end{proof}

We note that unlike the general result of \cite{bargeraboyce06}, where the asymptotic convergence rate of MSA $r^*(s) \approx 1 - s(1 - \max_i\{\operatorname{Re}(\lambda_i)\})$ depends on the eigenvalues of the Jacobian even for small $s$, Theorem~\ref{thm:msa_convergence} shows that the convergence rate for logit-based SUE is exactly $1 - s$ regardless of network topology, cost functions, or demand levels, provided only that link cost functions are monotone non-decreasing and that $s$ is sufficiently small. The expressions in \eqref{eq:step_size_b} and \eqref{eq:step_size_c} quantify the notion of ``sufficiently small step-sizes'' from \cite{bargeraboyce06}; note that our bound ensuring a ``sufficiently small'' step-size does depend on the demand level and the cost Jacobian, but the convergence rate itself does not.

The bound in \eqref{eq:step_size_c} is pessimistic, because it uses the worst-case bound on the minimum eigenvalue. In practice, as demonstrated in our numerical results, much larger values of $s$ can still yield linear convergence at rate $1 - s$. 

We now present a procedure to determine the step-size sequence for MSA that ensures global convergence starting from any feasible solution and local linear convergence near the SUE. We refer to the procedure to generate this sequence as \texttt{ACS}, and Algorithm~\ref{alg:4_stepsize} describes the method used to generate the sequence. When the step-size sequence from Algorithm~\ref{alg:4_stepsize} is used within MSA, we refer to the algorithm as \texttt{MSA-ACS}. 

In Algorithm~\ref{alg:4_stepsize}, a first-in-first-out queue $\mb{\bar{g}}$ tracks the last $q$ values of the gap function. During the first $I_s$ iterations (lines 3--4), the step-size follows the standard MSA rule $\hat{s} = 1/k$. After iteration $I_s$, a constant step-size of $1/I_s$ is used as long as the gap decreases by more than $\epsilon$ relative to the previous $q$ iterations. We refer to the failure of this decrease, $\frac{\bar{g}_0 - \bar{g}_{q-1}}{\bar{g}_0} < \epsilon$, as the \textit{reset condition}: when it triggers, the step-size is reset to the reciprocal of the current iteration number $1/k$ and held constant thereafter (lines 6--7); the reset condition is then checked again over the next $q$ iterations and the step-size is reset again only if it triggers. In our implementation of Algorithm~\ref{alg:4_stepsize}, we set $\epsilon = 0.01$ and $q=3$. If $I_s$ is chosen so that the first constant step-size $\hat{s} = 1/I_s$ is sufficiently small and $\mb{h}^k$ is sufficiently close to equilibrium, the reset condition should never trigger. To monitor this, $\hat{s}$ can be compared with $1/I_s$ at the end of the assignment procedure to determine whether the step-size had to be reduced.

\begin{algorithm}
\caption{Adaptive constant step-size (\texttt{ACS}) \label{alg:4_stepsize}}
\begin{algorithmic}[1]
    \State \textbf{Input:} $k$ (iteration), $\mb{\bar{g}}$ (FIFO list of last $q$ gap values with $\bar{g}_{q-1}$ the newest entry), current step-size $\hat{s}$
    \State \textbf{Hyperparameters:} $I_s$ (initial MSA phase length), $\epsilon=0.01$ (relative gap-reduction threshold)
    \If{$k \leq I_s$}
        \State $\hat{s} \gets 1/k$
    \Else
        \If{$\frac{\bar{g}_0 - \bar{g}_{q-1}}{\bar{g}_0} < \epsilon$} \Comment{Reset condition}
            \State $\hat{s} \gets 1/k$ \Comment{Implicitly held constant otherwise}
        \EndIf
    \EndIf
    \State \textbf{Return:} $\hat{s}$ (step-size for iteration $k$)
\end{algorithmic}
\end{algorithm}

The idea in Algorithm~\ref{alg:4_stepsize} is to follow MSA with harmonic step-sizes until $\mb{h}^k$ 
is in a neighborhood close enough to equilibrium with $\hat{s} < s_g$, then maintain a 
constant step-size for linear convergence. Intuitively, since $s_g > 0$ and the harmonic step-sizes decrease 
monotonically, $\hat{s}$ must eventually fall below $s_g$. From that point onward, the 
algorithm can only cycle through finitely many constant-step phases before the iterates 
enter a neighborhood of equilibrium where linear convergence applies. We now formalize this argument below. First, we show in Lemma~\ref{lem:finite_resets} that if the reset condition triggers only finitely many times, the iterates converge to $\hat{\mb{h}}$. Theorem~\ref{thm:global_convergence} then uses this fact to show global convergence. 

\begin{lem}[Convergence under finite resets]\label{lem:finite_resets}
Assume $\epsilon \in (0, 1)$ and that the gap function $g(\mb{h})$ satisfies the properties:
\begin{enumerate}
    \item[(i)] $g(\mb{h}) \ge 0$ for all feasible $\mb{h}$, with $g(\mb{h}) = 0$ if and only if $\mb{h} = \hat{\mb{h}}$;
    \item[(ii)] $g$ is a continuous function.
\end{enumerate}
If the reset condition in Algorithm~\ref{alg:4_stepsize} triggers only finitely many times, then $\mb{h}^k \to \hat{\mb{h}}$ as $k \to \infty$.
\end{lem}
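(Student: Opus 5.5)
If the reset condition triggers only finitely many times, then after the last reset, at some iteration $K$, the step-size is a fixed constant $\bar s = 1/K_r\in(0,1]$ forever. (Here $K_r$ is the iteration of the last reset; if no reset ever occurs after the initial phase, $K_r = I_s$.) The plan is to combine two facts about the tail of the run. First, the failure of the reset condition forces the gap to shrink geometrically along a subsequence. Second, continuity of $g$ together with property (i) and compactness of the feasible set turns a vanishing gap into convergence of the iterates.

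\textbf{Step 1 (geometric decay of the gap).} For all $k \ge K$ the reset condition fails, so $\bar g_{q-1} \le (1-\epsilon)\bar g_0$. In terms of the iterates this reads
\begin{equation*}
g(\mb{h}^{k}) \le (1-\epsilon)\, g(\mb{h}^{k-q+1}).
\end{equation*}
The precise index offset depends on how the FIFO queue is filled; I will fix the convention that the queue holds $g(\mb{h}^{k-q+1}),\dots,g(\mb{h}^{k})$. Chaining this inequality along each residue class modulo $q-1$ gives
\begin{equation*}
g(\mb{h}^{K+t(q-1)+j}) \le (1-\epsilon)^t\, g(\mb{h}^{K+j})
\end{equation*}
for each fixed $j\in\{0,\dots,q-2\}$. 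There are finitely many residue classes, each decaying geometrically since $\epsilon\in(0,1)$, so $g(\mb{h}^k)\to 0$. (If $q=2$ the chain is simply consecutive.) One small point to note is that $\bar g_0>0$ is needed for the ratio to be defined. If some gap is zero, then by (i) the iterate is already $\hat{\mb{h}}$, and $\hat{\mb{h}}$ is a fixed point of $\mathcal{L}$, so the MSA update keeps the iterates there.

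\textbf{Step 2 (gap to iterates).} Each MSA iterate is a convex combination of feasible vectors, since $\mathcal{L}$ maps feasible flows to feasible flows. Hence $\{\mb{h}^k\}$ lies in the compact feasible polytope. Suppose $\mb{h}^k\not\to\hat{\mb{h}}$. Then some subsequence stays at distance at least $\eta>0$ from $\hat{\mb{h}}$, and by compactness it has a further subsequence converging to a feasible $\bar{\mb{h}}\neq\hat{\mb{h}}$. By continuity (ii), $g(\bar{\mb{h}})=\lim g(\mb{h}^{k})=0$, and then (i) gives $\bar{\mb{h}}=\hat{\mb{h}}$, a contradiction. Equivalently, $g$ attains a positive minimum on the compact set $\{\mb{h}\text{ feasible}: \|\mb{h}-\hat{\mb{h}}\|\ge\eta\}$.

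\textbf{Expected obstacle.} The main delicacy is Step 1's bookkeeping: the reset test compares gaps $q-1$ iterations apart, not consecutive ones. This only yields decay along interleaved subsequences, and I must make sure every iteration index beyond $K$ is covered by some chain. Closely related, I need the iterates to stay well-defined and feasible for every step-size $s\in(0,1]$, which holds because the update is a convex combination. Notably, no local spectral result such as Theorem~\ref{thm:msa_convergence} is needed here: the finite-reset assumption alone forces the gap to vanish.
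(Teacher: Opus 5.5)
Your proof is correct and follows essentially the same route as the paper's. In both, the failed reset test gives $g_k \le (1-\epsilon)\, g_{k-q+1}$ for every $k$ past the last reset, chaining this yields $g_k \to 0$, and compactness of the feasible set together with continuity and property (i) forces every limit point to be $\hat{\mb{h}}$. Three details you add are left implicit in the paper: the residue-class bookkeeping, the observation that a zero gap pins the iterates at the fixed point $\hat{\mb{h}}$ (so the ratio in the reset test is well defined), and the explicit subsequence argument turning a unique limit point into convergence.
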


\begin{proof}
If the reset condition triggers only finitely many times, there exists an iteration $k'$, after which the reset condition never triggers. At any iteration $k > k'$, the FIFO queue $\mb{\bar{g}}$ entries satisfy $\bar{g}_0 = g_{k - q + 1}$ and $\bar{g}_{q-1} = g_k$. By the design of Algorithm~\ref{alg:4_stepsize}, if the reset condition does not hold true, we have $\frac{\bar{g}_0 - \bar{g}_{q-1}}{\bar{g}_0} \ge \epsilon$ at every check for $k > k'$, which is equivalent to $g_k \le (1 - \epsilon)\, g_{k - q + 1}$. So every gap value is at most $(1-\epsilon)$ times the value $q-1$ iterations earlier. 

Applying the inequality $g_k \le (1 - \epsilon)\, g_{k - q + 1}$ repeatedly from any large $k > k'$, we eventually land at some iteration at or before $k'$, with each step contributing a factor of $(1-\epsilon)$. Since the gap function values at iterations up to $k'$ are bounded, $g_k \to 0$ as $k \to \infty$. Given that path flows lie in a compact feasible region, the sequence $\{\mb{h}^k\}$ has at least one limit point. As $g_k \to 0$, assumption (ii) ensures every limit point satisfies $g = 0$. By assumption (i), the only such point is $\hat{\mb{h}}$. Hence $\mb{h}^k \to \hat{\mb{h}}$.
\end{proof}

One choice of gap function satisfying assumptions (i) and (ii) in Lemma~\ref{lem:finite_resets} is the fixed-point residual $\|\mathcal{L}(\mb{h}) - \mb{h}\|$, which measures how far the current flow is from being a fixed point of the logit mapping. Another gap function which satisfies these conditions is the average excess cost (AEC), defined as
\begin{equation*}
    \text{AEC} = \frac{\sum_{OD \in \Set{OD}}\sum_{i \in \Set{P}_{OD}} h_i \left(w_i - w_{\min}^{OD}\right)}{\sum_{OD \in \Set{OD}} d_{OD}},
\end{equation*}
where $w_i$ is the first-order derivative of the SUE objective function with respect to path flow $h_i$, and $w_{\min}^{OD} = \min_{i \in \Set{P}_{OD}} w_i$. Both AEC and residual of the fixed point mapping are non-negative, vanish only at $\hat{\mb{h}}$, and are continuous in $\mb{h}$ whenever the link cost functions are continuous. We also note that another commonly used gap function in SUE \citep{du2021faster, zhang2024distributed, wang2025enhancing}, the relative gap (RGAP), defined as
\begin{equation}
    \text{RGAP} = \frac{\sum_{OD \in \Set{OD}}\sum_{i \in \Set{P}_{OD}} h_i \left(w_i - w_{\min}^{OD}\right)}{\sum_{OD \in \Set{OD}}\sum_{i \in \Set{P}_{OD}} h_i \left|w_i\right|}, \label{eq:rgaap}
\end{equation}
does \textit{not} satisfy the continuity assumption, since the denominator can vanish as path flows vary. In our numerical results, we will use $\|\mathcal{L}(\mb{h}) - \mb{h}\|$ as the gap function for determining the step-size in Algorithm~\ref{alg:4_stepsize}.  In addition to satisfying the conditions of Lemma~\ref{lem:finite_resets}, this choice also avoids any additional floating-point operations per iteration to compute extra gap metrics, since both $\mb{h}$ and $\mathcal{L}(\mb{h})$ are already computed during MSA.  (The gap function used in Algorithm~\ref{alg:4_stepsize} to determine the step-size sequence need not be the same as the one used to evaluate solution quality at each iteration of MSA.  Our numerical results will use RGAP as the gap measure for evaluating solution quality to facilitate benchmarking, since most algorithms in the current literature use RGAP.)  We now use Lemma~\ref{lem:finite_resets} to establish global convergence of MSA with the step-size sequence from Algorithm~\ref{alg:4_stepsize}.

\begin{thm}[Global convergence of MSA with adaptive step-size]\label{thm:global_convergence}
Let $\hat{\mb{h}}$ denote the logit-based SUE path flow solution, and let $\{\mb{h}^k\}$ be the sequence of iterates generated by MSA with step-sizes from Algorithm~\ref{alg:4_stepsize} starting from any feasible initial solution. If the gap function satisfies the conditions in Lemma~\ref{lem:finite_resets}, then for any $\epsilon \in (0,1)$, $q > 1$, and $I_s > 1$, the sequence $\{\mb{h}^k\}$ converges to $\hat{\mb{h}}$.
\end{thm}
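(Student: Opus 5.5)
The plan is to split into two cases according to whether the reset condition in Algorithm~\ref{alg:4_stepsize} triggers finitely or infinitely often. If it triggers only finitely many times, Lemma~\ref{lem:finite_resets} applies directly and gives $\mb{h}^k \to \hat{\mb{h}}$. The whole argument therefore reduces to the case of infinitely many resets. In that case I will show that the generated step-size sequence satisfies the classical MSA conditions $\sum_k s_k = \infty$ and $s_k \to 0$, and then conclude by the standard global convergence argument for MSA applied to logit SUE.

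\textbf{Infinitely many resets.} I would first record the structure of the step sizes. Each reset at iteration $k_t$ sets $\hat{s} = 1/k_t$, and $\hat{s}$ is held constant until the next reset at $k_{t+1}$. Since the resets are separated by at least $q$ iterations and $k_t \to \infty$, we get $s_k \le 1/k_t \to 0$. For divergence, on every iteration $k \in [k_t, k_{t+1})$ we have $s_k = 1/k_t \ge 1/k$. Before $I_s$ the steps are exactly $1/k$, and between $I_s$ and the first reset they equal $1/I_s \ge 1/k$. Hence $s_k \ge 1/k$ for all $k$, so $\sum_k s_k = \infty$. It also follows that the step sizes are always at least harmonic and eventually vanish.

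\textbf{Convergence under these conditions.} For this step I would invoke the classical MSA convergence argument for logit SUE based on Fisk's strictly convex objective $z$. The key fact is that $\mathcal{L}(\mb{h}) - \mb{h}$ is a descent direction, with $\nabla z(\mb{h})^T(\mathcal{L}(\mb{h})-\mb{h}) \le -\phi(\mb{h})$ for some continuous $\phi \ge 0$ that vanishes only at $\hat{\mb{h}}$. Because $\nabla z$ is Lipschitz on the compact feasible set, a Taylor bound gives
\[
z(\mb{h}^{k+1}) \le z(\mb{h}^k) - s_k \phi(\mb{h}^k) + \tfrac{M}{2} s_k^2 \|\mathcal{L}(\mb{h}^k)-\mb{h}^k\|^2 .
\]
If $\sum s_k^2 < \infty$ held, this would immediately give $\liminf \phi = 0$. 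We do not have square-summability, however, because the steps are held constant between resets.

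\textbf{Main obstacle.} The hard step is exactly this missing $\sum s_k^2<\infty$. I would instead use $s_k\to 0$ together with $\sum s_k=\infty$, which is the Blum/Zangwill-type argument. Suppose the iterates stay outside a neighborhood $\mathbb{U}$ of $\hat{\mb{h}}$ from some point on. Then $\phi \ge \delta > 0$ there, and once $s_k$ is small enough the per-step decrease $s_k(\delta - \tfrac{M}{2} s_k B)$ is at least $s_k\delta/2$. Summing these decreases makes $z$ diverge to $-\infty$, which contradicts boundedness of $z$. So the iterates enter every neighborhood infinitely often. Because the steps vanish, the iterates can only leave $\mathbb{U}$ by steps of size $O(s_k)$. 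Beyond the first few such steps, $z$ strictly decreases near any point where $\phi>0$, and this prevents the iterates from escaping a sublevel set of $z$ around $\hat{\mb{h}}$. Since $z$ is strictly convex, such sublevel sets shrink to $\hat{\mb{h}}$, which gives $\mb{h}^k \to \hat{\mb{h}}$. If this descent route becomes too technical, an alternative is to argue that after enough resets $\hat{s} < s_g$, and then use Theorem~\ref{thm:msa_convergence} locally once the iterates are close. However, that route still needs the descent argument to get the iterates close in the first place.
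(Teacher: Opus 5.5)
Your proposal is correct, but in the infinitely-many-resets case it takes a genuinely different route from the paper.

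The paper argues by contradiction. If the iterates never enter $\mathbb{U}$, the gap stays above some $g_{\min}>0$. A constant-step phase survives only while the gap shrinks by a factor $(1-\epsilon)$ every $q-1$ iterations, so all phase lengths $N_j$ are bounded by a uniform $N_{\max}$. The phase start indices then grow linearly in $j$, which gives $\sum_k s_k=\infty$ and $\sum_k s_k^2<\infty$, and the Powell--Sheffi MSA theorem produces the contradiction.

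You instead use the fact that the logit iteration is deterministic. Square-summability exists to control noise, so you do not need it: $s_k\ge 1/k$ and $s_k\to 0$ suffice once you run the descent argument on Fisk's objective yourself. That argument rests on the identity $\nabla z(\hat{\mb{h}}+\cdot)$-free form
$\nabla z(\mb{h})^T(\mathcal{L}(\mb{h})-\mb{h})=-\frac{1}{\theta}\sum_i(\mathcal{L}(\mb{h})_i-h_i)(\ln\mathcal{L}(\mb{h})_i-\ln h_i)$, which holds because the OD-constant parts of $\nabla z$ vanish against zero OD sums.

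Your route buys two things. First, you never need to bound the phase lengths. Second, your sublevel-set trapping step shows that the whole sequence eventually stays in every neighborhood, whereas the paper's contradiction argument by itself only shows that the iterates enter every $\mathbb{U}$. The trapping works because, for large $k$, the objective strictly decreases in the band $\{z\ge z(\hat{\mb{h}})+\alpha/2\}$, while from $\{z<z(\hat{\mb{h}})+\alpha/2\}$ a step of size $s_k$ cannot push $z$ past $z(\hat{\mb{h}})+\alpha$. The price is that you must verify the descent inequality and the smoothness estimates yourself rather than cite a theorem.

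One claim needs repair. $\nabla z$ is not Lipschitz on the whole feasible set, because the entropy term contributes $\frac{1}{\theta}(\ln h_i+1)$ to the gradient. For the same reason, your $\phi$ is only continuous away from the boundary. To fix this, note that $\mathcal{L}(\mb{h})_i\ge\underline{y}:=\min_{OD}d_{OD}\cdot\min_{\mb{h},i}p_i>0$ on the compact feasible set. Hence $\min_i h^{k+1}_i\ge\min(\min_i h^k_i,\underline{y})$, and all iterates from $\mb{h}^1$ on lie in a compact set bounded away from the boundary. On that set your Taylor bound and $\delta>0$ are valid.

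Separately, your assertion that resets are at least $q$ iterations apart is unnecessary; only $k_t\to\infty$ is used.
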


\begin{proof}
We want to show that for any arbitrarily small neighborhood $\mathbb{U}$ around $\hat{\mb{h}}$ where a constant step-size guarantees linear convergence, the sequence of iterates $\{\mb{h}^k\}$ generated by Algorithm~\ref{alg:4_stepsize} eventually enters $\mathbb{U}$. To see this, we observe that for the initial $I_s$ iterations in Algorithm~\ref{alg:4_stepsize} the step-size follows the harmonic sequence $s_k = 1/k$. For $k > I_s$, the algorithm sets the step-size based on reduction in the gap function. If the gap reduces by more than $\epsilon$ ($\frac{\bar{g}_0 - \bar{g}_{q-1}}{\bar{g}_0} \ge \epsilon$), the step-size remains constant. Otherwise, if the gap does not reduce enough ($\frac{\bar{g}_0 - \bar{g}_{q-1}}{\bar{g}_0} < \epsilon$), the algorithm shrinks the step-size by resetting it to $1/k$. Consequently, two cases arise in the algorithm's long run: either the reset condition triggers finitely many times, or it triggers infinitely many times. We show that in either case the iterates $\mb{h}^k$ must enter $\mathbb{U}$.

\textit{Case 1: The reset condition occurs finitely many times.} By Lemma~\ref{lem:finite_resets}, $\mb{h}^k \to \hat{\mb{h}}$, so the iterates eventually enter $\mathbb{U}$.

\textit{Case 2: The reset condition triggers infinitely many times.} Assume for the sake of contradiction that the iterates $\mb{h}^k$ never enter the neighborhood $\mathbb{U}$. Then, $\{s_k\}$ follows a sequence:
\begin{align}
\{s_k\} =
\Bigg\{
&\underbrace{1, \frac{1}{2}, \dots, \frac{1}{I_s}}_{\substack{\text{length } I_s}},\;
\underbrace{\frac{1}{I_s+1}, \dots, \frac{1}{I_s+1}}_{\substack{\text{length } N_1}},\;
\underbrace{\frac{1}{I_s+N_1+1}, \dots, \frac{1}{I_s+N_1+1}}_{\substack{\text{length } N_2}},\;
\ldots \notag\\
&\ldots,\;
\underbrace{\frac{1}{I_s+\sum_{m=1}^{j-1}N_m+1},
\dots,
\frac{1}{I_s+\sum_{m=1}^{j-1}N_m+1}}_{\substack{\text{length } N_j}},\;
\ldots
\Bigg\}
\end{align}
Here, $N_j$ represents the length of each constant step-size after the $j$-th reset condition is met. Since $\mb{h}^k$ never enters $\mathbb{U}$ and the feasible region is compact, by continuity the gap remains bounded below by some $g_{\min} > 0$ for all $k$. During each constant step-size phase, the algorithm only maintains the constant step-size as long as the gap reduces by at least a factor of $\epsilon$ every $q-1$ iterations. Because the gap is bounded above by some $g_{\max}$ and bounded below by $g_{\min} > 0$, and the gap reduces by at least a factor of $\epsilon$ every $q-1$ iterations, each $N_j$ is bounded by some finite $N_{\max}$ that depends only on $g_{\max}$, $g_{\min}$, $\epsilon$, and $q$.

The starting iteration of the $j$-th constant step-size is $i_j = I_s + \sum_{m=1}^{j-1} N_m + 1$. Since $N_j$ is bounded between $1$ and $N_{\max}$, we have $I_s + (j-1) + 1 \leq i_j \leq I_s + N_{\max}(j-1) + 1$ for all $j \geq 1$. Let $S_1$ denote the sum of the harmonic series up to $I_s$ and $S_2$ denote the sum of squares of the harmonic series up to $I_s$. We now verify that this infinite sequence satisfies the two criteria of convergence of step-sizes in MSA:
\begin{align}
    \sum_k s_k &= S_1 +  \sum_{j=1}^{\infty} \frac{N_j}{i_j} \ge S_1 + \sum_{j=1}^{\infty} \frac{1}{I_s + N_{\max}(j-1)+1} = \infty \label{eq:wrong1}\,,\\
    \sum_k s_k^2 &= S_2 + \sum_{j=1}^{\infty} \frac{N_j}{i_j^2} \le S_2 + \sum_{j=1}^{\infty} \frac{N_{\max}}{(I_s +j)^2} < \infty \label{eq:wrong2}\,.
\end{align}

Since both conditions are satisfied, the standard convergence result for MSA with diminishing step-sizes \citep{powell1982convergence} together with compactness of the feasible region implies $\mb{h}^k \to \hat{\mb{h}}$. This requires the iterates to eventually enter every neighborhood of $\hat{\mb{h}}$, including $\mathbb{U}$, contradicting the assumption that they never enter $\mathbb{U}$. Hence the iterates must enter $\mathbb{U}$ in this case as well.
\end{proof}

While the convergence of the algorithm is guaranteed for any $\epsilon \in (0,1)$ and $I_s > 1$, its practical efficiency is sensitive to these hyperparameters. If $\epsilon$ is chosen to be large, the required gap reduction over iterations ($\frac{\bar{g}_0 - \bar{g}_{q-1}}{\bar{g}_0} \ge \epsilon$) becomes difficult to achieve. This forces the algorithm to continuously trigger the reset condition, meaning $s_k$ is set as $1/k$ most of the time. Under these conditions, the step-size $s_k$ becomes very small. Consequently, even after the iterates enter the neighborhood $\mathbb{U}$, the asymptotic linear convergence rate $(1 - s)$ approaches one, reducing the speed of linear convergence. Conversely, if $\epsilon$ is extremely small, the reset condition is satisfied very rarely, causing a long delay before entering $\mathbb{U}$ where linear convergence applies. An ideal choice of $\epsilon$ balances the two, allowing the iterates to enter $\mathbb{U}$ quickly, after which the algorithm ceases resetting and fixes a constant step-size $s$, enabling rapid linear convergence at the rate $(1 - s)$. A similar tradeoff applies for $I_s$ as well. If $I_s$ is set too small, $\hat{s}$ may not have been reduced enough by the end of the harmonic phase. The algorithm then spends several constant step-size steps without $\mb{h}^k$ converging, until the reset condition evaluates as true. At that point, $\hat{s}$ drops to a much smaller value, possibly well below what is needed for linear convergence. On the other hand, if $I_s$ is set too large, the first $\hat{s}$ becomes too small to begin with.

\section{A quadratically convergent method for logit-based SUE}\label{sec:newton}

The linear convergence rate of $1 - \hat{s}$ established in Section~\ref{sec:bound} shows that MSA with a constant step-size avoids the sublinear tail convergence of diminishing step-size sequences. However, linear convergence may still be slow if $\hat{s} \approx 0$. In this section, we develop a quadratically convergent method that uses step-sizes inspired by Newton's method.  The straightforward implementation of Newton's method in convex optimization involves inverting the Hessian of the objective function, which in this case is large and dense.  Approximating the Hessian by its diagonal entries, a common alternative in quasi-Newton schemes, is reasonable in deterministic user equilibrium, but \cite{bekhor2005investigating} show that in the SUE setting, such an approximation discards a significant amount of information.  We sidestep these difficulties by working with the Jacobian of the logit mapping, rather than the Hessian of the SUE objective, exploiting its spectral properties to guarantee quadratic convergence in a neighborhood of the SUE solution. We further show that using an inexact Newton method, this strategy can be implemented efficiently in practice.

We start from the fact that SUE is the solution to the system $\mathcal{L}(\mb{h}) = \mb{h}$ and reformulate it as a root-finding problem for the mapping $\mathcal{F}(\mb{h}) := \mathcal{L}(\mb{h}) - \mb{h}$. We then apply Newton's method to find a root of $\mathcal{F}(\mb{h})$, which requires solving a linear system involving the Jacobian $\mathcal{F}'(\mb{h}) = \mathcal{L}'(\mb{h}) - \mb{I}$. We refer to this linear system as the \emph{full Newton system}, and show that it is singular and admits infinitely many solutions. Among these infinite solutions, we show that exactly one preserves demand conservation, and we construct a reduced Newton system which has a unique solution that coincides with this demand-preserving solution of the full Newton system. We verify that the Jacobian of the reduced system is nonsingular and Lipschitz continuous near the solution, which together guarantee locally quadratic convergence of the resulting Newton iterates.

Furthermore, we show that close to the SUE solution, the step obtained from the reduced Newton system to the current path flow vector corresponds to an updated path flow vector that satisfies both demand conservation and non-negativity. Consequently, no projection onto the feasible region is required, eliminating the need for the manifold optimization typically required in second-order path-based methods for SUE. To make the approach computationally practical, we solve the reduced Newton system inexactly using a Krylov subspace method, and show that the resulting iterates also preserve demand and retain local quadratic convergence.

We begin in Section~\ref{subsec:4_update} by formulating the step-size updates and show that it preserves demand conservation. In Section~\ref{subsec:4_convergence}, we prove that the method converges to SUE and that the convergence is quadratic near equilibrium. Finally, in Section~\ref{subsec:4_computation}, we develop an inexact Newton implementation that solves the reduced Newton system with a matrix-free representation. We show that this implementation preserves demand at every iteration and retains local quadratic convergence under an adaptive tolerance rule. Throughout this section, readers may refer to Appendix~\ref{app:braess_newton} for a worked example on the Braess network that illustrates the construction of the full Newton system, its singularity, the reduced Newton system, and the demand-preserving property of the reduced Newton system.

\subsection{A Newton-like step-size}\label{subsec:4_update}
The SUE solution is the fixed point of $\mathcal{F}(\mb{h}) = \mathcal{L}(\mb{h}) - \mb{h}$. Equivalently, to find the SUE solution, we need a root of $\mathcal{F}(\mb{h})$. The standard Newton step $\boldsymbol{\hat{\delta}}^k$ to find the root at iterate $\mb{h}^k$ solves
\begin{equation}
\mathcal{F}'(\mb{h}^k)\,\boldsymbol{\hat{\delta}}^k = -\mathcal{F}(\mb{h}^k). \label{eq:fullNewton}
\end{equation}
We refer to the system in \eqref{eq:fullNewton} as the full Newton system. Using the decomposition of the logit mapping from Section~\ref{sec:3_Jacobian}, $\boldsymbol{\hat{\delta}}^k$ satisfies
\begin{equation}
    \bigl(\mathcal{H}'_{\mb{h}}(\mb{p}^k) + \mathcal{K}(\mb{h}^k) - \mb{I}\bigr)\,\boldsymbol{\hat{\delta}}^k = -\mathcal{F}(\mb{h}^k)\,. \label{eq:4_fullnewton}
\end{equation}
However, the matrix $\mathcal{H}'_{\mb{h}}(\mb{p})$ contributes an eigenvalue of one to $\mathcal{L}'(\mb{h})$ for each OD pair. This is because, as shown in Section~\ref{subsec:3_Jacobian}, the within-OD blocks $[\mathcal{H}'_{\mb{h}}(\mb{p})]_{OD}$ of $\mathcal{H}'_{\mb{h}}(\mb{p})$ have every entry in row $i$ equal to $p_i$. Since the probabilities sum to one, the column sums of each within-OD block are all equal to one. So the all-ones vector $\mb{1}_{OD}$ is a left eigenvector with eigenvalue one. For the same eigenvector, Lemma~\ref{lem:onesS} shows $\mathcal{K}(\mb{h}^k)$ has an eigenvalue zero. Consequently, $\mathcal{L}'(\mb{h})$ has eigenvalue one, and $\mathcal{F}'(\mb{h}) = \mathcal{L}'(\mb{h}) - \mb{I}$ is singular. Hence, it would be impossible to invert $\mathcal{H}'_{\mb{h}}(\mb{p}^k) + \mathcal{K}(\mb{h}^k) - \mb{I}$ to obtain the step $\boldsymbol{\hat{\delta}}^k$ as the system of equations in~\eqref{eq:4_fullnewton} has either infinitely many solutions or no solutions. If there are infinite solutions, we are interested in solutions that satisfy feasibility.

We resolve this singularity by dropping $\mathcal{H}'_{\mb{h}}$ from the Newton system and solving
\begin{equation}
    (\mb{I} - \mathcal{K}(\mb{h}^k))\,\boldsymbol{\delta}^k = \mathcal{F}(\mb{h}^k)\label{eq:4_reduced}
\end{equation}
instead. We refer to the set of equations in~\eqref{eq:4_reduced} as the \textit{reduced Newton system}. We now establish two results that justify this reduction: in Theorem~\ref{thm:4_reduced_solves_full} we show that any solution of the reduced system also solves the original, full Newton system. We then show in Theorem~\ref{thm:4_demand_unique} that the reduced system has a unique solution which is the unique demand-preserving solution of the full Newton system.

\begin{thm}\label{thm:4_reduced_solves_full}
Every solution $\boldsymbol{\delta}^k$ of the reduced Newton system~\eqref{eq:4_reduced} also solves the full Newton system~\eqref{eq:4_fullnewton}.
\end{thm}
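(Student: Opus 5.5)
The plan is to show that any solution $\boldsymbol{\delta}^k$ of the reduced Newton system~\eqref{eq:4_reduced} is annihilated by $\mathcal{H}'_{\mb{h}}(\mb{p}^k)$. Once $\mathcal{H}'_{\mb{h}}(\mb{p}^k)\,\boldsymbol{\delta}^k = \mb{0}$ is known, the left-hand side of~\eqref{eq:4_fullnewton} collapses to $(\mathcal{K}(\mb{h}^k) - \mb{I})\boldsymbol{\delta}^k$. By~\eqref{eq:4_reduced} this equals $-\mathcal{F}(\mb{h}^k)$, which is exactly the full Newton system.

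\textbf{Step 1 (reduce to a per-OD sum).} By the block structure used in the proof of Lemma~\ref{lem:Hh_zero}, the matrix $\mathcal{H}'_{\mb{h}}(\mb{p})$ is block-diagonal in the OD partition, and row $i$ of each diagonal block is constant with entries $p_i$. Hence, for any vector $\mb{x}$,
\[
(\mathcal{H}'_{\mb{h}}(\mb{p})\,\mb{x})_i = p_i\, \mb{1}_{OD(i)}^T [\mb{x}]_{OD(i)} .
\]
It therefore suffices to show that $\mb{1}_{OD}^T[\boldsymbol{\delta}^k]_{OD} = 0$ for every OD pair, i.e.\ that the reduced step is demand-preserving.

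\textbf{Step 2 (multiply the reduced system by $\mb{1}_{OD}^T$).} Rewrite~\eqref{eq:4_reduced} as $\boldsymbol{\delta}^k = \mathcal{F}(\mb{h}^k) + \mathcal{K}(\mb{h}^k)\boldsymbol{\delta}^k$ and take the OD-$r$ block. Because $\mb{S}$ is block-diagonal, $[\mathcal{K}\boldsymbol{\delta}^k]_{OD_r} = -[\mb{S}]_{OD_r}\,[\mb{J}\boldsymbol{\delta}^k]_{OD_r}$. The blockwise identity~\eqref{eq:odDS} from the proof of Lemma~\ref{lem:onesS} gives $\mb{1}_{OD_r}^T[\mb{S}]_{OD_r} = \mb{0}^T$, so the $\mathcal{K}$ term vanishes after multiplying by $\mb{1}_{OD_r}^T$. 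This leaves
\[
\mb{1}_{OD_r}^T[\boldsymbol{\delta}^k]_{OD_r} = \mb{1}_{OD_r}^T[\mathcal{F}(\mb{h}^k)]_{OD_r}.
\]

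\textbf{Step 3 (the residual has zero OD sums).} By the generalized form~\eqref{eq:dem_h}, we have $\sum_{i\in\Set{P}_{OD}} \mathcal{L}(\mb{h})_i = \big(\sum_{i'} h_{i'}\big)\sum_i p_i = \sum_{i'} h_{i'}$, since the probabilities in each OD pair sum to one. Hence $\mb{1}_{OD}^T[\mathcal{F}(\mb{h}^k)]_{OD} = 0$, so by Step 2 every OD sum of $\boldsymbol{\delta}^k$ vanishes. This identity holds even if $\mb{h}^k$ is not demand-feasible, so no feasibility assumption on the iterate is needed. Combined with Step 1, $\mathcal{H}'_{\mb{h}}(\mb{p}^k)\boldsymbol{\delta}^k = \mb{0}$, and the concluding substitution described at the start completes the argument.

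The only delicate point is Step 2. Lemma~\ref{lem:onesS} is stated with the global all-ones vector, but here we need the per-OD version $\mb{1}_{OD}^T[\mathcal{K}]_{OD,\cdot}=\mb{0}^T$. I would obtain it directly from~\eqref{eq:odDS} together with the block-diagonality of $\mb{S}$, since $\mb{1}_{OD}^T$ applied to a block row of $\mathcal{K} = -\mb{S}\mb{J}$ only sees $[\mb{S}]_{OD}$.
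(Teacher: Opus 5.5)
Your proposal is correct and follows the paper's proof essentially step for step. Both arguments reduce the claim to $\mathcal{H}'_{\mb{h}}(\mb{p}^k)\boldsymbol{\delta}^k = \mb{0}$, obtain zero OD sums of $\boldsymbol{\delta}^k$ by summing the reduced system over each OD pair (using $\mb{1}_{OD}^T[\mb{S}]_{OD} = \mb{0}^T$ and the zero OD sums of $\mathcal{F}(\mb{h}^k)$), and finish with the constant-row structure of $\mathcal{H}'_{\mb{h}}$; the one small difference is that you derive the zero OD sums of $\mathcal{F}$ from the generalized form~\eqref{eq:dem_h}, so your version does not need $\mb{h}^k$ to be demand-feasible, whereas the paper appeals to both $\mathcal{L}(\mb{h})$ and $\mb{h}$ satisfying the demand constraints.
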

\begin{proof}
It suffices to show that $\mathcal{H}'_{\mb{h}}(\mb{p}^k)\,\boldsymbol{\delta}^k = \mb{0}$ for every solution to the reduced Newton system. First, we observe that $\sum_{i \in \Set{P}_{OD}} \mathcal{K}_{ij} = 0$ for every $j$ and every OD pair. This follows directly from $\mb{1}_{OD}^T [\mb{S}]_{OD} = \mb{0}^T$ (Equation~\eqref{eq:odDS} in Lemma~\ref{lem:onesS}) together with the block-diagonal structure of $\mb{S}$. Second, $\mathcal{F}(\mb{h})$ sums to zero within each OD pair, since both $\mathcal{L}(\mb{h})$ and $\mb{h}$ satisfy the demand constraints:
\begin{equation}
    \sum_{i \in \Set{P}_{OD}} \mathcal{F}(\mb{h})_i = \sum_{i \in \Set{P}_{OD}} \mathcal{L}(\mb{h})_i - \sum_{i \in \Set{P}_{OD}} h_i = d_{OD} - d_{OD} = 0\,.
\end{equation}
Summing both sides of $(\mb{I} - \mathcal{K})\,\boldsymbol{\delta}^k = \mathcal{F}$ over paths $i \in \Set{P}_{OD}$ yields
\begin{equation}
    \sum_{i \in \Set{P}_{OD}} \delta^k_i - \sum_j \left(\sum_{i \in \Set{P}_{OD}} \mathcal{K}_{ij}\right) \delta^k_j = \sum_{i \in \Set{P}_{OD}} \mathcal{F}_i\,.
\end{equation}
The second term on the left vanishes because $\sum_{i \in \Set{P}_{OD}} \mathcal{K}_{ij} = 0$ for every $j$, and the right-hand side vanishes because $\mathcal{F}(\mb{h})$ sums to zero within each OD pair.
Therefore $\sum_{i \in \Set{P}_{OD}} \delta^k_i = 0$ for every OD pair in any solution to the reduced Newton system.  

Furthermore, $\mathcal{H}'_{\mb{h}}(\mb{p}^k)\,\boldsymbol{\delta}^k = \mb{0}$ because each row $i$ of the within-OD block $\mathcal{H}'_{\mb{h}}(\mb{p}^k)$ consists of the constant $p_i$ in every column, so the $i$-th component of the product is $p_i \sum_{j \in \Set{P}_{OD}} \delta_j^k = 0$, and because the cross-OD blocks of $\mathcal{H}'_{\mb{h}}(\mb{p}^k)$ are zero.
Therefore $\boldsymbol{\delta}^k$ also satisfies the original system of equations in~\eqref{eq:4_fullnewton}.
\end{proof}

Theorem~\ref{thm:4_reduced_solves_full} shows that any solution of the reduced Newton system would also solve the full Newton system, which would imply that the (singular) full system has infinitely many solutions.  If that is the case, we must isolate a solution which is meaningful.  In addition to the fixed-point condition $\mb{h} = \mathcal{L}(\mb{h})$, the SUE solution must also satisfy the demand constraints. The next theorem shows that such a solution exists; indeed, that the reduced Newton system is in fact nonsingular, and its unique solution is demand-feasible.

\begin{thm}\label{thm:4_demand_unique}
The reduced Newton system~\eqref{eq:4_reduced} has a unique solution $\boldsymbol{\delta}^k$, and this solution is the only solution in the solution set of the full Newton system~\eqref{eq:4_fullnewton} that preserves demand, that is $\sum_{i \in \Set{P}_{OD}} \delta_i^k = 0$ for every OD pair.
\end{thm}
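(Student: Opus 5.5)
The plan has two parts: first show that $\mb{I} - \mathcal{K}(\mb{h}^k)$ is nonsingular, then show that the demand-preserving solutions of the full Newton system are exactly the solutions of the reduced one.

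\textbf{Nonsingularity.} By Theorem~\ref{thm:spectrum}, every eigenvalue $\lambda$ of $\mathcal{K}(\mb{h}^k)$ is real and satisfies $\lambda \le 0$. Hence every eigenvalue of $\mb{I} - \mathcal{K}(\mb{h}^k)$ equals $1 - \lambda \ge 1$, so none is zero. The matrix is therefore invertible, and the reduced Newton system~\eqref{eq:4_reduced} has exactly one solution $\boldsymbol{\delta}^k = (\mb{I} - \mathcal{K}(\mb{h}^k))^{-1}\mathcal{F}(\mb{h}^k)$. This step relies on the monotone, separable cost assumption through Theorem~\ref{thm:spectrum}. The key fact is that $\mathcal{K} = -\mb{S}\mb{J}$ has the same nonzero eigenvalues as the symmetric negative semidefinite matrix $-\mb{S}^{1/2}\mb{J}\mb{S}^{1/2}$, so no eigenvalue equal to one can occur.

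\textbf{This solution is demand-preserving and solves the full system.} Theorem~\ref{thm:4_reduced_solves_full} gives both facts directly. Its proof shows that any solution of the reduced system satisfies $\sum_{i \in \Set{P}_{OD}} \delta^k_i = 0$ for every OD pair. It also shows that such a solution solves the full Newton system~\eqref{eq:4_fullnewton}.

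\textbf{Uniqueness among demand-preserving solutions of the full system.} Let $\tilde{\boldsymbol{\delta}}$ be any solution of~\eqref{eq:4_fullnewton} with zero sum within every OD block. Then $\mathcal{H}'_{\mb{h}}(\mb{p}^k)\tilde{\boldsymbol{\delta}} = \mb{0}$, by the same row-constant argument used in Lemma~\ref{lem:Hh_zero} and Theorem~\ref{thm:4_reduced_solves_full}: row $i$ of the product equals $p_i \sum_{i' \in \Set{P}_{OD(i)}} \tilde{\delta}_{i'} = 0$. Substituting into~\eqref{eq:4_fullnewton} leaves $(\mathcal{K}(\mb{h}^k) - \mb{I})\tilde{\boldsymbol{\delta}} = -\mathcal{F}(\mb{h}^k)$, so $\tilde{\boldsymbol{\delta}}$ solves the reduced system. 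By nonsingularity, $\tilde{\boldsymbol{\delta}} = \boldsymbol{\delta}^k$. This proves both claims of the theorem.

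The only delicate point is the nonsingularity step, since the rest is bookkeeping with the zero-sum property. A diagonalizability argument is not needed there: invertibility requires only that $1$ is not an eigenvalue of $\mathcal{K}$, and the real, non-positive spectrum from Theorem~\ref{thm:spectrum} already guarantees that.
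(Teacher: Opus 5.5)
Your proof is correct and follows essentially the same route as the paper. You get invertibility of $\mb{I} - \mathcal{K}(\mb{h}^k)$ from the real, non-positive spectrum in Theorem~\ref{thm:spectrum}, demand preservation from Theorem~\ref{thm:4_reduced_solves_full}, and uniqueness from the fact that $\mathcal{H}'_{\mb{h}}(\mb{p}^k)$ annihilates zero-OD-sum vectors. The only cosmetic difference is that you apply this annihilation directly to $\tilde{\boldsymbol{\delta}}$ to show it solves the reduced system, whereas the paper applies it to the difference $\mb{v} = \boldsymbol{\delta}^* - \boldsymbol{\delta}^k$ and argues by contradiction.
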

\begin{proof}
We first establish existence. We showed in Theorem~\ref{thm:spectrum} that all eigenvalues of $\mathcal{K}(\mb{h})$ are real and non-positive. Therefore, all eigenvalues of $\mb{I} - \mathcal{K}(\mb{h})$ are at least one, and the matrix is invertible. This guarantees a unique $\boldsymbol{\delta}^k$ from the reduced system, and as we now show, this $\boldsymbol{\delta}^k$ is the only solution of the full Newton system that preserves demand.

By Theorem~\ref{thm:4_reduced_solves_full}, $\boldsymbol{\delta}^k$ is at least one demand-preserving solution since $\sum_{i \in \Set{P}_{OD}} \delta^k_i = 0$ for every OD pair. It remains to show that $\boldsymbol{\delta}^k$ is the \emph{only} demand-preserving solution of the full system. Let $\boldsymbol{\delta}^* \neq \boldsymbol{\delta}^k$ be any solution of the full Newton system satisfying demand, and define the difference $\mb{v} := \boldsymbol{\delta}^* - \boldsymbol{\delta}^k$. Since both $\boldsymbol{\delta}^*$ and $\boldsymbol{\delta}^k$ satisfy the full Newton system, $\mb{v}$ lies in the null space of $\mathcal{F}'(\mb{h}^k)$, that is,
\begin{equation}
    \bigl(\mathcal{H}'_{\mb{h}}(\mb{p}^k) + \mathcal{K}(\mb{h}^k) - \mb{I}\bigr)\,\mb{v} = \mb{0}. \label{eq:proof_null}
\end{equation}

Looking at $\mathcal{H}'_{\mb{h}}(\mb{p}^k)\,\mb{v}$, we note that the cross-OD blocks of $\mathcal{H}'_{\mb{h}}(\mb{p}^k)$ are zero. So, the action of $\mathcal{H}'_{\mb{h}}(\mb{p}^k)$ on $\mb{v}$ decomposes into within-OD blocks, each acting on the corresponding $[\mb{v}]_{OD}$. Each row $i$ of the within-OD block of $\mathcal{H}'_{\mb{h}}$ has the form $p_i \mb{1}_{OD}^T$, so within each OD pair,
\begin{equation}
    [\mathcal{H}'_{\mb{h}}(\mb{p}^k)\,\mb{v}]_{OD} = [\mb{p}]_{OD}\,(\mb{1}_{OD}^T [\mb{v}]_{OD}) = [\mb{p}]_{OD} \cdot 0 = \mb{0}.
\end{equation}
The last step holds because both $\boldsymbol{\delta}^*$ and $\boldsymbol{\delta}^k$ preserve demand, so $\mb{v} = \boldsymbol{\delta}^* - \boldsymbol{\delta}^k$ has zero OD sums, that is, $\mb{1}_{OD}^T [\mb{v}]_{OD} = 0$ for every OD pair. Since this holds for every OD pair, $\mathcal{H}'_{\mb{h}}(\mb{p}^k)\,\mb{v} = \mb{0}$ globally. Substituting into equation~\eqref{eq:proof_null} eliminates the $\mathcal{H}'_{\mb{h}}$ term and leaves
\begin{equation*}
    (\mb{I} - \mathcal{K}(\mb{h}^k))\,\mb{v} = \mb{0}.
\end{equation*}
Since we assumed $\boldsymbol{\delta}^* \neq \boldsymbol{\delta}^k$, or equivalently, $\mb{v} \neq \mb{0}$, the matrix $\mb{I} - \mathcal{K}(\mb{h}^k)$ would have to be singular. However, we showed earlier that $\mb{I} - \mathcal{K}(\mb{h}^k)$ is invertible, yielding a contradiction. Hence $\mb{v} = \mb{0}$, which means $\boldsymbol{\delta}^* = \boldsymbol{\delta}^k$.
\end{proof}

\subsection{Convergence analysis}\label{subsec:4_convergence}

The Newton-like update takes the form
\begin{equation}\label{eq:4_newton_update}
    \mb{h}^{k+1} = \mb{h}^k + \boldsymbol{\delta}^k\,,
\end{equation}
where $\boldsymbol{\delta}^k$ solves the reduced Newton system~\eqref{eq:4_reduced} at $\mb{h}^k$. Although we have shown the Newton step $\boldsymbol{\delta}^k$ preserves demand feasibility in Theorem~\ref{thm:4_demand_unique}, it does not guarantee non-negativity. However, since logit probabilities are strictly positive, there always exists a neighborhood around the SUE where the Newton step is non-negative. To prove this rigorously, we assume that the Newton step is used inside a globally convergent algorithm such as \cite{du2021faster} or even MSA. The logit model assigns strictly positive probabilities to all paths, so at equilibrium, $\hat{h}_i = d_{OD}\, p_i > 0$ for every path $i$.

Let $\zeta = \min_{i \in \Set{P}} \hat{h}_i > 0$ be the minimum equilibrium path flow, and consider a neighborhood of $\hat{\mb{h}}$ of radius $\zeta / 2$. Within this neighborhood, every path flow satisfies $h_i^k > \zeta / 2 > 0$. Since $\mathcal{F}(\hat{\mb{h}}) = \mb{0}$ and $(\mb{I} - \mathcal{K})^{-1}$ is bounded (from the spectral analysis in Theorem~\ref{thm:spectrum}), we have $\boldsymbol{\delta}^k = (\mb{I} - \mathcal{K})^{-1} \mathcal{F}(\mb{h}^k) \to \mb{0}$ as $\mb{h}^k \to \hat{\mb{h}}$. In particular, for iterates sufficiently close to $\hat{\mb{h}}$, we have $\|\boldsymbol{\delta}^k\|_\infty < \zeta / 2$, so the next iterate $\mb{h}^k + \boldsymbol{\delta}^k$ remains strictly positive. Since path flows are strictly positive and demand is preserved, no projection is needed and the Newton step is feasible.

While the Newton step is feasible within this neighborhood, reaching the neighborhood from an arbitrary feasible starting solution requires a globally convergent algorithm. We therefore use the Newton step only in conjunction with such an algorithm. The natural questions then are: 1) when to switch to a Newton step, and 2) whether we can keep using the Newton step once we have switched. We show we can accept the Newton step and continue using it if (a) the path flows after taking the Newton step are feasible, and (b) the sufficient decrease condition
\begin{equation}\label{eq:4_armijo}
    \left\|\mathcal{F}\!\left(\mb{h}^k + \boldsymbol{\delta}^k\right)\right\| \leq (1 - \nu_1)\left\|\mathcal{F}(\mb{h}^k)\right\|
\end{equation}
is satisfied, with any $\nu_1 \in (0, 1)$. \cite{kelley2003solving} suggests $\nu_1 = 10^{-4}$, which we adopt. Otherwise, we switch back to the globally convergent algorithm. We now show that this entire procedure is globally convergent and achieves local quadratic convergence.

By standard unconstrained Newton's method convergence theory \citep{kelley2003solving}, there exists a neighborhood around the SUE in which the Newton step leads to quadratic convergence, provided the Jacobian is nonsingular and Lipschitz continuous. Nonsingularity on the reduced system follows from the spectral analysis in Theorem~\ref{thm:spectrum}, and Lipschitz continuity of the Jacobian follows from the fact that it is a composition of smooth functions on the bounded feasible set of path flows. We also showed that there exists a neighborhood around the SUE where feasibility is guaranteed when the Newton step is taken. Quadratic convergence holds in the smaller of these two neighborhoods, which we refer to as the ``convergence basin.''

Away from equilibrium, we rely on the globally convergent algorithm to drive the iterates into the convergence basin. However, there is no simple way to decide if the iterates are already in this basin. Hence, we take the Newton step whenever it is feasible and the sufficient decrease condition in Equation~\eqref{eq:4_armijo} holds. Accepting the Newton step once does not guarantee that subsequent iterates remain in the basin. We may therefore need to revert to the globally convergent algorithm even after a Newton step has been accepted. The iterates are nonetheless eventually guaranteed to enter the basin through the globally convergent algorithm, at which point quadratic convergence is assured. Taking a Newton step outside the basin does no harm, since by construction we accept it only when feasibility is maintained and the sufficient decrease condition holds.

\subsection{Inexact Newton iteration}\label{subsec:4_computation}

Although the Newton-like step described in Section~\ref{subsec:4_update} achieves local quadratic convergence, each iteration requires solving a linear system of size $n \times n$, which is computationally expensive. In particular, an exact implementation that inverts $\mb{I} - \mathcal{K}(\mb{h})$ at every iteration incurs a cost on the order of $n^3$. Moreover, explicitly forming $\mathcal{K}(\mb{h}) \in \mathbb{R}^{n \times n}$ is impractical for large-scale networks where the number of paths can exceed $10^6$. At such scales, even storing the matrix in computer memory becomes prohibitive, let alone solving the associated linear system. To address this issue, we adopt an inexact Newton approach. At each iteration, we approximate the action of the inverse of $\mb{I} - \mathcal{K}(\mb{h})$ within a Krylov subspace, without ever explicitly forming $\mathcal{K}(\mb{h})$. This approximation is progressively refined as the iterations proceed. We show that the resulting update preserves demand and retains local quadratic convergence.

From Section~\ref{subsec:3_evals}, we observe that $\mathcal{K}(\mb{h})$ can be factorized as $-\mb{S}\mb{J}$, where $\mb{S} = -\mathcal{H}'_{\mb{p}}(\mb{h})\,\mathcal{P}'(\mb{c})$ is block-diagonal, and $\mb{J} = \mathcal{C}'(\mb{h})$ factors through the link-path incidence matrix $\mb{D}$ and the diagonal matrix of marginal link costs $\mathcal{T}^{\prime}(\mb{a})$ as $\mb{D}^T \mathcal{T}^{\prime}(\mb{a})\,\mb{D}$. As a result, the matrix $\mb{I} - \mathcal{K}(\mb{h})$ never needs to be formed explicitly. Instead, its product with any vector $\mb{v}$ can be computed via
\begin{equation}\label{eq:4_matvec}
    (\mb{I} - \mathcal{K}(\mb{h}))\,\mb{v} = \mb{v} + \mb{S}\,(\mb{D}^T\,\mathcal{T}^{\prime}(\mb{a})\,\mb{D}\,\mb{v})\,.
\end{equation}
Evaluating \eqref{eq:4_matvec} from right to left involves only sparse matrix-vector multiplications (with $\mb{D}$ and $\mb{D}^T$), block-diagonal multiplications (with $\mb{S}$), and element-wise operations (with $\mathcal{T}^{\prime}(\mb{a})$), so the computational cost scales with the number of nonzeros rather than with $n^2$.

With this matrix-free representation, we solve the reduced Newton system using the generalized minimal residual method (GMRES) \citep{saad1986gmres}. GMRES is an iterative Krylov subspace method for non-symmetric linear systems that requires only matrix-vector products, making it well suited to our setting. By the Cayley-Hamilton theorem,  $(\mb{I} - \mathcal{K}(\mb{h}^k))^{-1}$ can be written as a polynomial in $\mb{I} - \mathcal{K}(\mb{h}^k)$. Consequently, $(\mb{I} - \mathcal{K}(\mb{h}^k))^{-1}\,\mathcal{F}(\mb{h}^k)$ can be expressed as a linear combination of
\begin{align}
\mathcal{F}(\mb{h}^k),\ 
(\mb{I} - \mathcal{K}(\mb{h}^k))\,\mathcal{F}(\mb{h}^k),\ 
(\mb{I} - \mathcal{K}(\mb{h}^k))^2\,\mathcal{F}(\mb{h}^k),\ \ldots,\ 
(\mb{I} - \mathcal{K}(\mb{h}^k))^{n-1}\,\mathcal{F}(\mb{h}^k). \label{eq:krylov}
\end{align}
The subspace spanned by the vectors in \eqref{eq:krylov} forms the Krylov subspace generated by $\mathcal{F}(\mb{h}^k)$ and $\mb{I} - \mathcal{K}(\mb{h}^k)$. At each GMRES iteration, starting from one dimension, the Krylov subspace is expanded by one dimension, and the method selects the best linear combination in this subspace that minimizes the residual norm of the linear system. If the subspace reaches its maximal dimension, the exact solution to the reduced Newton system is recovered.

In our case, however, we terminate GMRES once a prescribed relative tolerance $\eta_k$ is met. This yields an inexact Newton step $\boldsymbol{\delta}^k$ satisfying
\begin{equation}\label{eq:forcing}
    \frac{\left\|(\mb{I} - \mathcal{K}(\mb{h}^k))\,\boldsymbol{\delta}^k - \mathcal{F}(\mb{h}^k)\right\|}{\left\|\mathcal{F}(\mb{h}^k)\right\|} \leq \eta_k.
\end{equation}
This raises two natural questions: whether the step still preserves demand, and whether local quadratic convergence is retained. We answer both in the affirmative in Proposition~\ref{lem:feasibility} and Proposition~\ref{lem:adaptive_tolerance}, respectively.

\begin{prp}\label{lem:feasibility}
At any iteration $k$ of a globally convergent traffic assignment solver, GMRES initialized at the zero vector produces an iterate $\boldsymbol{\delta}^k$ that preserves demand, that is, $\sum_{j \in \Set{P}_{OD}} \delta_j^k = 0$ for every OD pair.
\end{prp}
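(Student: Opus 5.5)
The plan is to show that the Krylov subspace generated by $\mathcal{F}(\mb{h}^k)$ and $\mb{I} - \mathcal{K}(\mb{h}^k)$ lies entirely inside the demand-preserving subspace
\begin{equation*}
\mathbb{V} := \{\mb{v} \in \mathbb{R}^n : \mb{1}_{OD}^T [\mb{v}]_{OD} = 0 \text{ for every OD pair}\}.
\end{equation*}
Since GMRES started at $\boldsymbol{\delta}^{(0)} = \mb{0}$ returns iterates in $\boldsymbol{\delta}^{(0)} + \mathcal{K}_m$, where $\mathcal{K}_m$ is the $m$-dimensional Krylov subspace spanned by the first $m$ vectors in \eqref{eq:krylov}, every GMRES iterate would then lie in $\mathbb{V}$. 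This holds in particular for the iterate at which the stopping criterion \eqref{eq:forcing} is met, regardless of the tolerance $\eta_k$. The choice of zero initial guess is what makes the affine space $\boldsymbol{\delta}^{(0)} + \mathcal{K}_m$ a linear subspace of $\mathbb{V}$.

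The key steps are two invariance facts.

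First, $\mathcal{F}(\mb{h}^k) \in \mathbb{V}$. This is exactly the computation in the proof of Theorem~\ref{thm:4_reduced_solves_full}: both $\mathcal{L}(\mb{h}^k)$ and $\mb{h}^k$ sum to $d_{OD}$ over each OD pair when $\mb{h}^k$ is feasible. The hypothesis ``at any iteration of a globally convergent traffic assignment solver'' is used precisely to guarantee that $\mb{h}^k$ is feasible.

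Second, $\mathbb{V}$ is invariant under $\mb{I} - \mathcal{K}(\mb{h}^k)$. For $\mb{v} \in \mathbb{V}$, write $\mathcal{K} = -\mb{S}\mb{J}$. By Lemma~\ref{lem:onesS} and the block-diagonal structure of $\mb{S}$, we have $\mb{1}_{OD}^T [\mb{S}\mb{x}]_{OD} = \mb{1}_{OD}^T [\mb{S}]_{OD} [\mb{x}]_{OD} = 0$ for any $\mb{x}$. Hence $\mathcal{K}\mb{v} \in \mathbb{V}$ for every $\mb{v}$, not only for $\mb{v} \in \mathbb{V}$, so $(\mb{I} - \mathcal{K})\mb{v} = \mb{v} - \mathcal{K}\mb{v} \in \mathbb{V}$.

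By induction, $(\mb{I} - \mathcal{K})^j \mathcal{F}(\mb{h}^k) \in \mathbb{V}$ for all $j \ge 0$. Thus every Krylov subspace $\mathcal{K}_m$ is contained in $\mathbb{V}$.

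To conclude, I will recall the defining property of GMRES: its $m$-th iterate is $\arg\min_{\mb{y} \in \boldsymbol{\delta}^{(0)} + \mathcal{K}_m} \|(\mb{I}-\mathcal{K})\mb{y} - \mathcal{F}\|$. With $\boldsymbol{\delta}^{(0)} = \mb{0}$, this iterate is a linear combination of Krylov basis vectors, so it lies in $\mathbb{V}$. This gives $\sum_{j \in \Set{P}_{OD}} \delta^k_j = 0$ for every OD pair.

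The main obstacle is pinning down this GMRES fact cleanly, since in practice GMRES builds an Arnoldi basis rather than using the raw Krylov vectors. I would handle it by noting that the Arnoldi vectors are obtained from the Krylov vectors by Gram--Schmidt orthogonalization, i.e., as linear combinations of them, so they span the same subspace $\mathcal{K}_m \subseteq \mathbb{V}$. Any restarted variant of GMRES is covered as well: each restart begins from a point in $\mathbb{V}$, and its residual $\mathcal{F} - (\mb{I}-\mathcal{K})\mb{y}$ again lies in $\mathbb{V}$ by the two invariance facts above. Floating-point round-off is outside the scope of the argument; it is in exact arithmetic.
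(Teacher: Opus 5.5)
Your proposal is correct and follows essentially the same route as the paper. Both arguments show by induction that every vector spanning the Krylov subspace generated by $\mathcal{F}(\mb{h}^k)$ and $\mb{I}-\mathcal{K}(\mb{h}^k)$ has zero OD sums: the base case comes from feasibility, and the inductive step from $\mb{1}_{OD}^T[\mb{S}]_{OD}=\mb{0}^T$. They then note that GMRES started at $\mb{0}$ stays in this subspace, and that the Arnoldi basis spans the same subspace. Your extra remarks on restarted GMRES, and on $\mathcal{K}\mb{v}$ having zero OD sums for every $\mb{v}$, are valid refinements but are not needed.
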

\begin{proof}
At iteration $k$ of traffic assignment, GMRES initialized at $\mb{0}$ produces the iterate $\boldsymbol{\delta}^k$ that minimizes $\|\mathcal{F}(\mb{h}^k) - (\mb{I} - \mathcal{K}(\mb{h}^k))\,\boldsymbol{\delta}^k\|$ over the Krylov subspace
\begin{equation}\label{eq:krylov1}
    \mathbb{K}_b := \mathrm{span}\bigl\{\mathcal{F}(\mb{h}^k),\ (\mb{I} - \mathcal{K}(\mb{h}^k))\,\mathcal{F}(\mb{h}^k),\ \ldots,\ (\mb{I} - \mathcal{K}(\mb{h}^k))^{b-1}\,\mathcal{F}(\mb{h}^k)\bigr\},
\end{equation}
where $b$ is the smallest dimension at which GMRES attains the relative tolerance $\eta_k$. Since $\boldsymbol{\delta}^k \in \mathbb{K}_b$, it suffices to show that every basis vector in~\eqref{eq:krylov1} has zero OD sums.

We proceed by induction. For the base case, $\mathcal{F}(\mb{h}^k)$ has zero OD sums because it is the difference of two feasible path flows. For the inductive step, suppose $\mb{v}$ has zero OD sums, that is, $\mb{1}_{OD}^T [\mb{v}]_{OD} = 0$ for every OD pair. From Equation~\eqref{eq:odDS} in the proof of Lemma~\ref{lem:onesS}, $\mb{1}_{OD}^T [\mb{S}]_{OD} = \mb{0}^T$ for each block, so
\begin{equation*}
    \mb{1}_{OD}^T [(\mb{I} - \mathcal{K}(\mb{h}^k))\,\mb{v}]_{OD} = \mb{1}_{OD}^T [\mb{v}]_{OD} + \mb{1}_{OD}^T [\mb{S}]_{OD} [\mb{J}\,\mb{v}]_{OD} = 0 + \mb{0}^T [\mb{J}\,\mb{v}]_{OD} = 0.
\end{equation*}
Hence $(\mb{I} - \mathcal{K}(\mb{h}^k))\,\mb{v}$ also has zero OD sums, and by induction every spanning vector in~\eqref{eq:krylov1} does too. The result follows.
\end{proof}

In practice, GMRES does not work directly with the spanning sequence defining $\mathbb{K}_b$. Instead, it uses the Arnoldi process to build an orthonormal basis for the same subspace, which yields a much better estimate of the inverse in fewer dimensions. Since the orthonormal basis spans the same subspace, the conclusion $\boldsymbol{\delta}^k \in \mathbb{K}_b$ is independent of the spanning vectors used.

\begin{prp}\label{lem:adaptive_tolerance}
Setting the GMRES relative tolerance as
\begin{equation}\label{eq:adaptive_tol}
    \eta_k = \min\!\left(\eta_{\text{tol}},\ \nu_2 \left\|\mathcal{F}(\mb{h}^k)\right\|\right),
\end{equation}
with $\eta_{\text{tol}} < 1$ and $\nu_2 > 0$, ensures that the inexact Newton method converges locally quadratically.
\end{prp}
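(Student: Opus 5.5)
The plan is to reduce the statement to the classical inexact Newton theorem of Dembo--Eisenstat--Steihaug, as presented in \cite{kelley2003solving}. That theorem says the following. Suppose $\mb{x}^*$ is a root of a map $\mathcal{G}$ whose Jacobian is Lipschitz near $\mb{x}^*$ and nonsingular at $\mb{x}^*$. Suppose each step satisfies $\|\mathcal{G}'(\mb{x}^k)\boldsymbol{\delta}^k+\mathcal{G}(\mb{x}^k)\|\le\eta_k\|\mathcal{G}(\mb{x}^k)\|$ with $\eta_k=O(\|\mathcal{G}(\mb{x}^k)\|)$. Then the iterates converge quadratically for $\mb{x}^0$ close enough to $\mb{x}^*$. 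The difficulty is that the natural map $\mathcal{F}$ has a singular Jacobian, as shown in Section~\ref{subsec:4_update}, so the theorem cannot be applied to $\mathcal{F}$ on $\mathbb{R}^n$ directly. Instead I will apply it on the affine manifold of demand-feasible flows, where by Proposition~\ref{lem:feasibility} all iterates live, using $\mb{I}-\mathcal{K}$ in the role of the Jacobian.

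\textbf{Step 1: local error equation.} I would expand $\mathcal{F}(\mb{h}^{k+1})$ around $\mb{h}^k$ along the step. Since $\boldsymbol{\delta}^k$ has zero OD sums (Proposition~\ref{lem:feasibility}), Lemma~\ref{lem:Hh_zero} and the argument of Theorem~\ref{thm:4_reduced_solves_full} give $\mathcal{H}'_{\mb{h}}\boldsymbol{\delta}^k=\mb{0}$. Hence $\mathcal{F}'(\mb{h}^k)\boldsymbol{\delta}^k=-(\mb{I}-\mathcal{K}(\mb{h}^k))\boldsymbol{\delta}^k$. Lipschitz continuity of $\mathcal{L}'$ on the bounded feasible set then gives
\[
\|\mathcal{F}(\mb{h}^{k+1})\|\le \|\mathcal{F}(\mb{h}^k)-(\mb{I}-\mathcal{K}(\mb{h}^k))\boldsymbol{\delta}^k\| + \tfrac{\gamma}{2}\|\boldsymbol{\delta}^k\|^2
\le \eta_k\|\mathcal{F}(\mb{h}^k)\| + \tfrac{\gamma}{2}\|\boldsymbol{\delta}^k\|^2 .
\]
With $\eta_k\le\nu_2\|\mathcal{F}(\mb{h}^k)\|$, the first term is at most $\nu_2\|\mathcal{F}(\mb{h}^k)\|^2$.

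\textbf{Step 2: uniform bounds.} By Theorem~\ref{thm:spectrum}, the eigenvalues of $\mb{I}-\mathcal{K}$ are real and at least $1$. However, $\mb{I}-\mathcal{K}$ is not symmetric, so eigenvalues alone do not bound the inverse norm. I would use the similarity structure instead. The matrix $\mb{I}-\mb{S}^{1/2}\cdots$ does not directly conjugate, so it is cleaner to rely on continuity. The map $\mb{h}\mapsto(\mb{I}-\mathcal{K}(\mb{h}))^{-1}$ is continuous, and $\mb{I}-\mathcal{K}(\mb{h})$ is invertible everywhere, so its inverse norm is bounded by some $M$ on a compact neighborhood of $\hat{\mb{h}}$. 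Combining this with the residual bound gives $\|\boldsymbol{\delta}^k\|\le M(1+\eta_{\text{tol}})\|\mathcal{F}(\mb{h}^k)\|$, so $\|\mathcal{F}(\mb{h}^{k+1})\|\le C\|\mathcal{F}(\mb{h}^k)\|^2$.

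\textbf{Step 3: from residuals to errors.} I need $\|\mathcal{F}(\mb{h})\|$ to be comparable to $\|\mb{h}-\hat{\mb{h}}\|$ for feasible $\mb{h}$ near $\hat{\mb{h}}$. For the upper bound, Lipschitz continuity gives $\|\mathcal{F}(\mb{h})\|\le L\|\mb{e}\|$. For the lower bound, write $\mathcal{F}(\mb{h})=-(\mb{I}-\mathcal{K}(\hat{\mb{h}}))\mb{e}+O(\|\mb{e}\|^2)$, again using $\mathcal{H}'_{\mb{h}}\mb{e}=\mb{0}$ for the feasible difference $\mb{e}=\mb{h}-\hat{\mb{h}}$. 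This yields $\|\mathcal{F}(\mb{h})\|\ge \|\mb{e}\|/(2M)$ for $\mb{e}$ small.

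\textbf{Step 4: conclusion.} Combining Steps 1--3 gives $\|\mb{e}^{k+1}\|\le C'\|\mb{e}^k\|^2$. An induction then keeps the iterates in the ball and in the positivity neighborhood of radius $\zeta/2$ from Section~\ref{subsec:4_convergence}, so no projection is needed. The main obstacle I expect is Step 2 together with the lower bound in Step 3. Theorem~\ref{thm:spectrum} does not give a norm bound on the inverse of the non-normal matrix $\mb{I}-\mathcal{K}$, so these steps must rely on invertibility at $\hat{\mb{h}}$ plus continuity, or on a weighted norm induced by $\mb{S}$, and both bounds must be uniform over the neighborhood.
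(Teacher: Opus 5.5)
Your proposal is correct, but it is more careful than the paper's proof. The paper appeals directly to \citet{dembo1982inexact}. It takes Lipschitz continuity of $\mathcal{F}'$ from Section~\ref{subsec:4_convergence} and notes that~\eqref{eq:forcing} with $\eta_k\le\nu_2\|\mathcal{F}(\mb{h}^k)\|$ makes the linear residual $O(\|\mathcal{F}(\mb{h}^k)\|^2)$. It does not reconcile this with the singularity of $\mathcal{F}'(\hat{\mb{h}})$ established in Section~\ref{subsec:4_update}. That theorem's nonsingularity hypothesis fails for $\mathcal{F}$ on $\mathbb{R}^n$, and the invertibility of $\mb{I}-\mathcal{K}$ is tacitly used in its place.

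You justify that substitution. You restrict to the demand-feasible affine set and use Proposition~\ref{lem:feasibility} to get $\mathcal{H}'_{\mb{h}}\boldsymbol{\delta}^k=\mb{0}$. Then $\mathcal{F}'(\mb{h}^k)\boldsymbol{\delta}^k=-(\mb{I}-\mathcal{K}(\mb{h}^k))\boldsymbol{\delta}^k$, so the reduced residual equals the full Newton residual up to sign. You then reprove the local estimate by hand: a uniform inverse bound, the quadratic residual recursion, and residual--error equivalence. Your version has its hypotheses actually verified and tracks positivity of the iterates. Your recursion $\|\mathcal{F}(\mb{h}^{k+1})\|\le C\|\mathcal{F}(\mb{h}^k)\|^2$ also shows the acceptance test~\eqref{eq:4_armijo} passes automatically once $C\|\mathcal{F}(\mb{h}^k)\|\le 1-\nu_1$. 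The paper's version buys only brevity.

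The obstacle you flag in Step 2 has an explicit answer, so no continuity argument is needed. Writing $\mb{B}=\mb{S}^{1/2}\mb{J}\mb{S}^{1/2}$, one checks directly that
\[
(\mb{I}-\mathcal{K})^{-1}=(\mb{I}+\mb{S}\mb{J})^{-1}=\mb{I}-\mb{S}^{1/2}(\mb{I}+\mb{B})^{-1}\mb{S}^{1/2}\mb{J},
\]
and $\|(\mb{I}+\mb{B})^{-1}\|\le 1$ since $\mb{B}$ is symmetric PSD. Hence
$\|(\mb{I}-\mathcal{K}(\mb{h}))^{-1}\|\le 1+\|\mb{S}\|\,\|\mb{J}\|\le 1+\theta(\max_{OD} d_{OD})\|\mathcal{C}'(\mb{h})\|$
by Lemma~\ref{lem:Snorm}, uniformly over feasible $\mb{h}$. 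This also lets you delete the unfinished sentence about conjugating by $\mb{S}^{1/2}$.
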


\begin{proof}
By \citet{dembo1982inexact}, once the iterates enter a neighborhood of the solution, the inexact Newton method converges at least quadratically whenever $\mathcal{F}'$ is Lipschitz continuous and the residual of the linear system scales with $\|\mathcal{F}(\mb{h}^k)\|^2$. Lipschitz continuity of $\mathcal{F}'$ was already established in Section~\ref{subsec:4_convergence}. For the residual scaling, we observe that the condition~\eqref{eq:forcing} bounds the residual by $\eta_k \|\mathcal{F}(\mb{h}^k)\|$, so it suffices to show that $\eta_k$ itself scales with $\|\mathcal{F}(\mb{h}^k)\|$ in the asymptotic regime. The choice in~\eqref{eq:adaptive_tol} satisfies this: as $\|\mathcal{F}(\mb{h}^k)\| \to 0$, the second argument of the minimum dominates and $\eta_k = \nu_2 \left\|\mathcal{F}(\mb{h}^k)\right\|$. Substituting back, the residual is bounded by $\nu_2 \|\mathcal{F}(\mb{h}^k)\|^2$, which gives the required quadratic scaling.
\end{proof}

We summarize the full Newton-based procedure in Algorithm~\ref{alg:newton_step} and refer to this Newton step procedure as \texttt{Newton}. The procedure depends on three hyperparameters: the line-search parameter $\nu_1$, the initial GMRES tolerance $\eta_{\text{tol}}$, and the multiplier $\nu_2$ that controls when the residual-dependent tolerance regime activates. In our implementation, we set $\nu_1 = 10^{-4}$ following the standard choice in \cite{kelley2003solving}, $\eta_{\text{tol}} = 10^{-2}$ as a conventional starting tolerance, and $\nu_2 = 10^3$. The algorithm is largely insensitive to these choices. Any small positive value of $\nu_1$ works, and any positive value of $\nu_2$ yields quadratic convergence once the iterates enter the asymptotic regime; larger $\nu_2$ simply delays the tightening. The only practical guidance is to avoid choosing $\eta_{\text{tol}}$ or $\nu_2$ too small, since tightening the GMRES tolerance early is wasteful as the Newton step only delivers quadratic convergence close to equilibrium, where feasibility is also guaranteed.
\begin{algorithm}
\caption{Newton step with GMRES (\texttt{Newton})}
\label{alg:newton_step}
\begin{algorithmic}[1]
\State \textbf{Input:} Current iterate $\mb{h}^k$
\State \textbf{Hyperparameters:} $\eta_{\text{tol}} = 10^{-2}$, $\nu_1 = 10^{-4}$, $\nu_2 = 10^3$
\State $\mathcal{F}(\mb{h}^k) \gets \mathcal{L}(\mb{h}^k) - \mb{h}^k$
\State $\eta_k \gets \min(\eta_{\text{tol}},\ \nu_2 \|\mathcal{F}(\mb{h}^k)\|)$
\State Solve $(\mb{I} - \mathcal{K}(\mb{h}^k))\,\boldsymbol{\delta}^k = \mathcal{F}(\mb{h}^k)$ iteratively using GMRES with tolerance $\eta_k$
\State $\mb{h}_{\text{trial}} \gets \mb{h}^k + \boldsymbol{\delta}^k$
\If{$\mb{h}_{\text{trial}}$ is infeasible \textbf{or} $\left\|\mathcal{F}(\mb{h}_{\text{trial}})\right\| > (1 - \nu_1)\left\|\mathcal{F}(\mb{h}^k)\right\|$}
\State \textbf{Return:} ``step rejected'' indicating the need to revert to a globally convergent algorithm.
\EndIf
\State $\mb{h}^{k+1} \gets \mb{h}_{\text{trial}}$
\State \textbf{Return:} Next iterate $\mb{h}^{k+1}$
\end{algorithmic}
\end{algorithm}

In an efficient implementation of Algorithm~\ref{alg:newton_step}, the only matrix needed to be stored in computer memory is the link-path incidence matrix $\mb{D} \in \mathbb{R}^{|\Set{L}| \times |\Set{P}|}$, which is used in computation of $\mathcal{L}$. Because each path traverses only a small subset of the network's links, $\mb{D}$ is  sparse and can be stored in a compressed sparse row format, which records only the nonzero entries and their indices. All other quantities, including path flows, probabilities, costs, and marginal link costs, can be stored as one-dimensional arrays, with a separate index array marking the boundaries between OD pairs.  This keeps the memory footprint proportional to the number of nonzero entries in $\mb{D}$, making the method feasible for networks with millions of paths on standard hardware.


\section{Computational results}\label{sec:results}

We present several computational results to complement our theoretical derivations. These experiments serve two primary purposes. First, in Subsection~\ref{subsec:res_MSA}, we test the convergence rate of MSA with a small constant step-size $s$ near equilibrium and verify that the rate is exactly $1-s$. Second, in Subsection~\ref{subsec:res_newton}, we benchmark our proposed methods, \texttt{MSA-ACS} and \texttt{Newton}, against MSA with harmonic step-sizes and the recent method of \cite{du2021faster}.

Because these subsections investigate different quantities, the experiments are conducted on networks of varying sizes. Verifying the theoretical results in Subsection~\ref{subsec:res_MSA} requires detailed knowledge of $\mathcal{K}$, including its eigenvalues. Since $\mathcal{K}$ has dimensions $n \times n$, where $n$ is the number of paths, these tests are restricted to smaller networks. In contrast, the benchmarking experiments in Subsection~\ref{subsec:res_newton} leverage the matrix-free approach of Subsection~\ref{subsec:4_computation}. This avoids forming $\mathcal{K}$ explicitly and allows us to scale to larger networks.

All experiments use network data and trip tables from \cite{transportationNetworks}. We classify the tested networks into three groups: small-scale, medium-scale, and large-scale. The small-scale networks include Sioux Falls, Berlin-Mitte-Center (BMC), Eastern-Massachusetts (EMA), and Anaheim. The medium-scale networks include Winnipeg Asymmetric, Berlin Center, and Chicago Sketch. The large-scale networks include Chicago Regional and Philadelphia. In all experiments, the initial solution was taken as the path flow obtained from logit network loading under free-flow conditions and RGAP, as defined in Equation~\eqref{eq:rgaap}, was used as the convergence criterion. All algorithms were implemented in Python 3.12.5 and run on a Dell Precision 3680 workstation with an Intel Core i9-14900K processor and 128~GB of RAM. The source code of all our algorithms is available at \url{https://github.com/spartalab/logit-SUE}.

The number of paths chosen for each OD pair is shown in Table~\ref{tab:4_res}.
For small and medium-scale networks, the path set was generated using Yen's $k$-shortest-paths algorithm. For larger networks, Yen's algorithm proved computationally slow, so we instead used a penalty-based approach. For each origin, we first compute a single-source shortest path tree to obtain the shortest path to every destination. We then repeat the following procedure up to $k-1$ times. For every destination, we identify the most recently added path, select each of its links independently with probability $0.5$, and increase their cost by $50\%$. We then recompute the shortest path tree on the penalized graph and add any newly discovered unique paths. The process terminates early if no new unique paths are found for any destination. This procedure is repeated for every origin.

We use two metrics to evaluate the generated path sets. The first is the coefficient of variation of path costs per OD pair, averaged across all OD pairs, which measures whether the paths in the set are competitive with one another. The second is the mean edge Jaccard index across all OD pairs, which measures whether the paths are sufficiently distinct or differ only by a few links. Table~\ref{tab:4_res} summarizes the key attributes of all networks, along with the subsections in which they appear.

\begin{table}
    \centering
    \small
    \begin{tabular}{lrrrrrrccccc}
        \toprule
        \textbf{Network} & \textbf{OD} & \textbf{Links} & \textbf{Nodes} & $\mb{k}$ & \textbf{Paths} & \textbf{Proc.} & $\overline{\text{CV}}$ & $\overline{J}$ & \textbf{S1} & \textbf{S2} \\
        \midrule
        \multicolumn{11}{l}{\textbf{Small-scale}} \\
        Sioux Falls               &       528 &    76 &    24 & 20 &        10\,560 & Yen     & 0.210 & 0.164 & \checkmark & \checkmark \\
        BMC             &     1\,260 &   871 &   398 & 20 &        25\,188 & Yen     & 0.116 & 0.428 & \checkmark & \checkmark \\
        EMA              &     1\,113 &   258 &    74 & 20 &        21\,824 & Yen     & 0.142 & 0.292 & \checkmark & \checkmark \\
        Anaheim          &     1\,406 &   914 &   416 & 20 &        28\,120 & Yen     & 0.064 & 0.455 & \checkmark & \checkmark \\
        \midrule
        \multicolumn{11}{l}{\textbf{Medium-scale}} \\
        Winnipeg Asymmetric         &     4\,345 & 2\,535 &   1\,057 & 20 &        86\,900 & Yen     & 0.067 & 0.385 &            & \checkmark \\
        Chicago Sketch   &    93\,135 & 2\,950 &   933 & 20 &  1\,862\,700 & Yen     & 0.048 & 0.434 &            & \checkmark \\
        Berlin Center    &    49\,688 & 28\,376 & 12\,981 & 20 &     993\,760 & Yen     & 0.054 & 0.573 &            & \checkmark \\
        \midrule
        \multicolumn{11}{l}{\textbf{Large-scale}} \\
        Philadelphia     & 1\,149\,795 & 40\,003 & 13\,389 & 5  &  5\,609\,985 & Penalty & 0.120 & 0.130 &            & \checkmark \\
        Chicago Regional & 2\,296\,227 & 39\,018 & 12\,982 & 5  & 11\,190\,758 & Penalty & 0.205 & 0.155 &            & \checkmark \\
        \bottomrule
    \end{tabular}
    \caption{Networks used in the study and properties of the generated path sets. 
    OD: number of OD pairs with positive demand; 
    Links/Nodes: network size; 
    ${k}$: target number of paths per OD pair; 
    Paths: total number of paths in the generated set (summed over OD pairs); 
    Proc.: path-set generation procedure (Yen's $k$-shortest-paths algorithm or link-penalty method); 
    $\overline{\text{CV}}$: mean coefficient of variation of free-flow path costs across OD pairs; 
    $\overline{J}$: mean pairwise edge Jaccard index across OD pairs; 
    S1/S2: network is included in the experiments of Section~\ref{subsec:res_MSA} / Section~\ref{subsec:res_newton}. 
    Networks are grouped by size into small, medium, and large scale instances.}
    \label{tab:4_res}
\end{table}

\subsection{Analysis of MSA with constant step-size}\label{subsec:res_MSA}

We tested the convergence rate of MSA in logit-based SUE on four networks: Sioux Falls, BMC, EMA, and Anaheim. We first compare MSA with the \texttt{ACS} step-size rule against the harmonic step-size sequence $\{1/k\}$. We then compute the empirical convergence rate of MSA with \texttt{ACS} step-sizes and analyze the relationship between the maximum step-size admitting linear convergence and the eigenvalues of $\mathcal{K}$.

Table~\ref{tab:4_runtime} compares the number of MSA iterations required to reach different gap levels under the two step-size rules on the tested networks with $\theta = 0.5$. Here, Rule~1 refers to the harmonic step-size, and Rule~2 refers to the \texttt{ACS} step-size rule in Algorithm~\ref{alg:4_stepsize} with $I_s = 10$ and $\epsilon = 0.01$. The results show that the \texttt{ACS} step-size rule leads to significantly faster convergence. Moreover, under Rule~2, the number of iterations required to reduce the gap by one order of magnitude remains roughly constant, which is characteristic of linear convergence. Figure~\ref{fig:convergence_small} shows the evolution of RGAP in log scale over iterations for three networks under both rules for $\theta = 1.0$.
\begin{table}
    \centering
    \begin{tabular}{r  c c c c c c c c}
        \toprule
        \textbf{Gap} & \multicolumn{2}{c}{\textbf{Sioux Falls}} & \multicolumn{2}{c}{\textbf{BMC}} & \multicolumn{2}{c}{\textbf{EMA}} & \multicolumn{2}{c}{\textbf{Anaheim}} \\
        \cmidrule(lr){2-3} \cmidrule(lr){4-5} \cmidrule(lr){6-7} \cmidrule(lr){8-9}
        RGAP & Rule 1 & Rule 2 & Rule 1 & Rule 2 & Rule 1 & Rule 2 & Rule 1 & Rule 2 \\
        \midrule
        $10^{-1}$ & 23 & 19 & 3 & 3 & 1 & 1 & 1 & 1 \\
        $10^{-2}$ & 531 & 50 & 4 & 4 & 4 & 4 & 4 & 4 \\
        $10^{-3}$ & $>$1000 & 77 & 14 & 14 & 9 & 9 & 11 & 11 \\
        $10^{-4}$ & $>$1000 & 102 & 131 & 36 & 47 & 26 & 70 & 30 \\
        $10^{-5}$ & $>$1000 & 126 & $>$1000 & 59 & 327 & 44 & 630 & 51 \\
        $10^{-6}$ & $>$1000 & 149 & $>$1000 & 82 & $>$1000 & 65 & $>$1000 & 72 \\
        $10^{-7}$ & $>$1000 & 172 & $>$1000 & 105 & $>$1000 & 86 & $>$1000 & 94 \\
        $10^{-8}$ & $>$1000 & 195 & $>$1000 & 127 & $>$1000 & 107 & $>$1000 & 116 \\
        $10^{-9}$ & $>$1000 & 218 & $>$1000 & 150 & $>$1000 & 129 & $>$1000 & 138 \\
        $10^{-10}$ & $>$1000 & 241 & $>$1000 & 172 & $>$1000 & 151 & $>$1000 & 160 \\
        \bottomrule
    \end{tabular}
    \caption{Number of iterations of MSA required to reach desired RGAP level using Rule 1 and Rule 2 for step-size selection across different networks ($\theta = 0.5$). Rule 1 sets step-size as the reciprocal of iteration, while Rule 2 sets step-size as per Algorithm~\ref{alg:4_stepsize}.}
    \label{tab:4_runtime}
\end{table}

\begin{figure}
    \centering
    \begin{subfigure}[b]{0.32\textwidth}
        \centering
        \includegraphics[width=\textwidth]{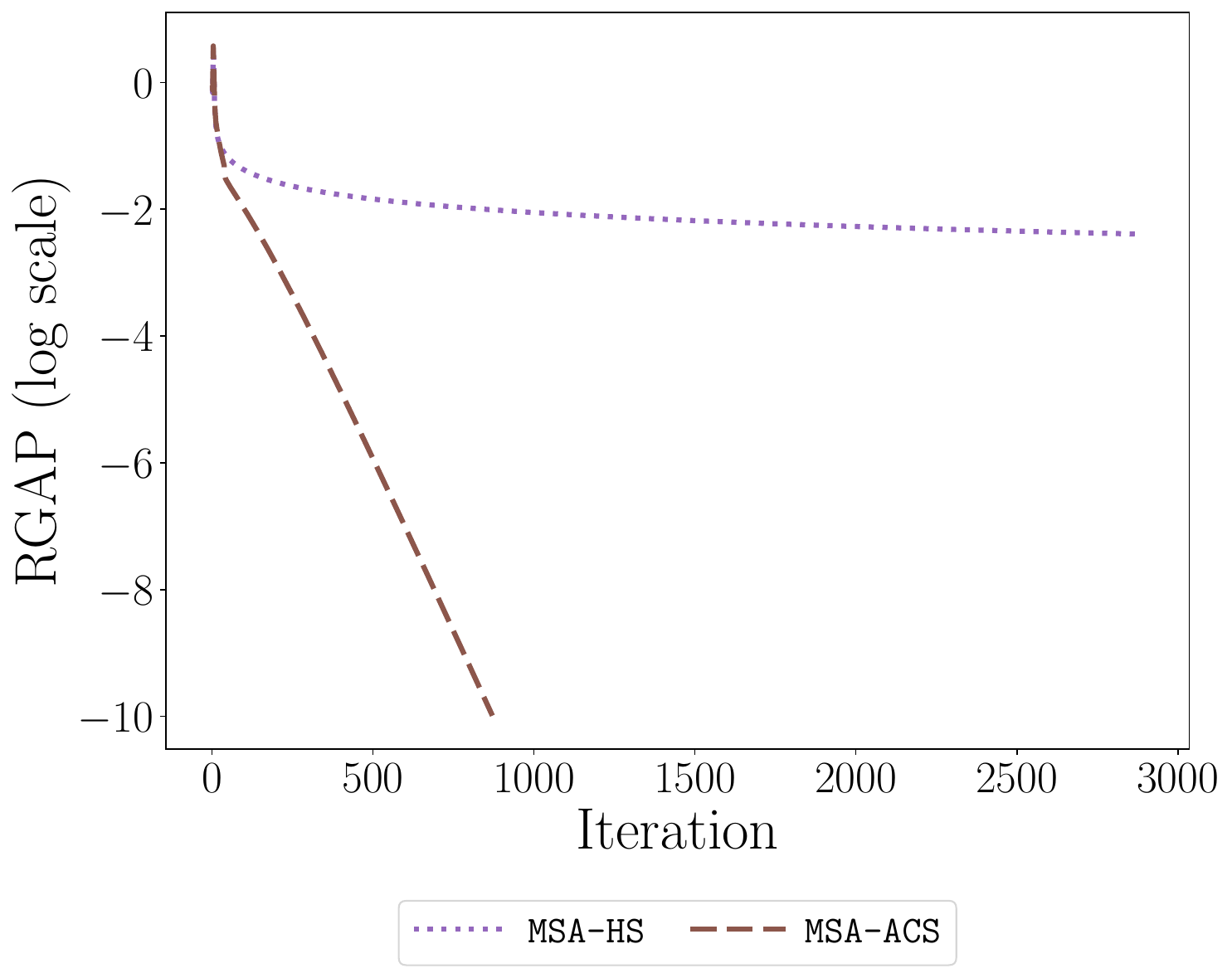}
        \caption{Sioux Falls}
        \label{fig:conv_sioux_falls}
    \end{subfigure}
    \hfill
    \begin{subfigure}[b]{0.32\textwidth}
        \centering
        \includegraphics[width=\textwidth]{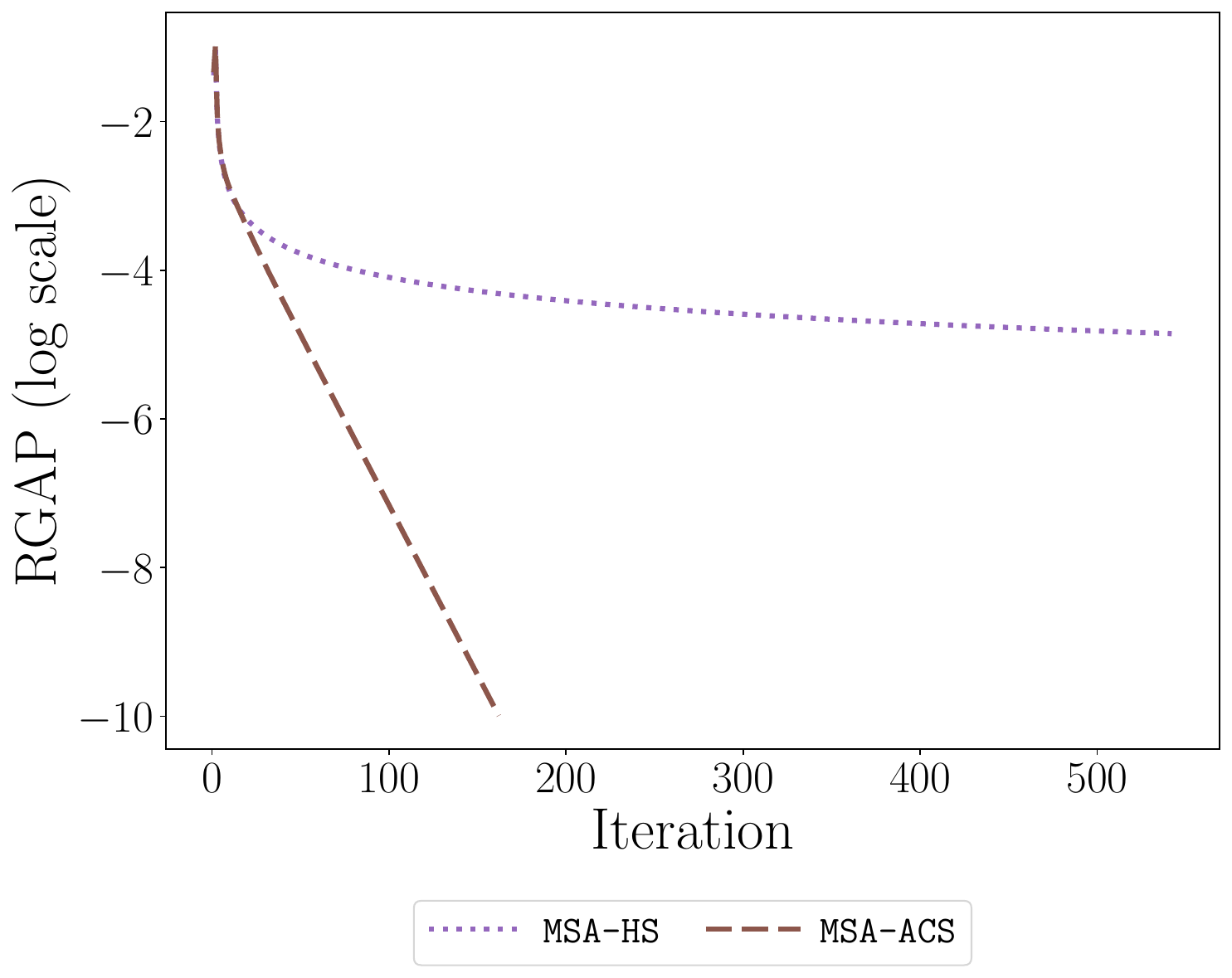}
        \caption{Anaheim}
        \label{fig:conv_Berlin-Mitte-Center}
    \end{subfigure}
    \hfill
    \begin{subfigure}[b]{0.32\textwidth}
        \centering
\includegraphics[width=\textwidth]{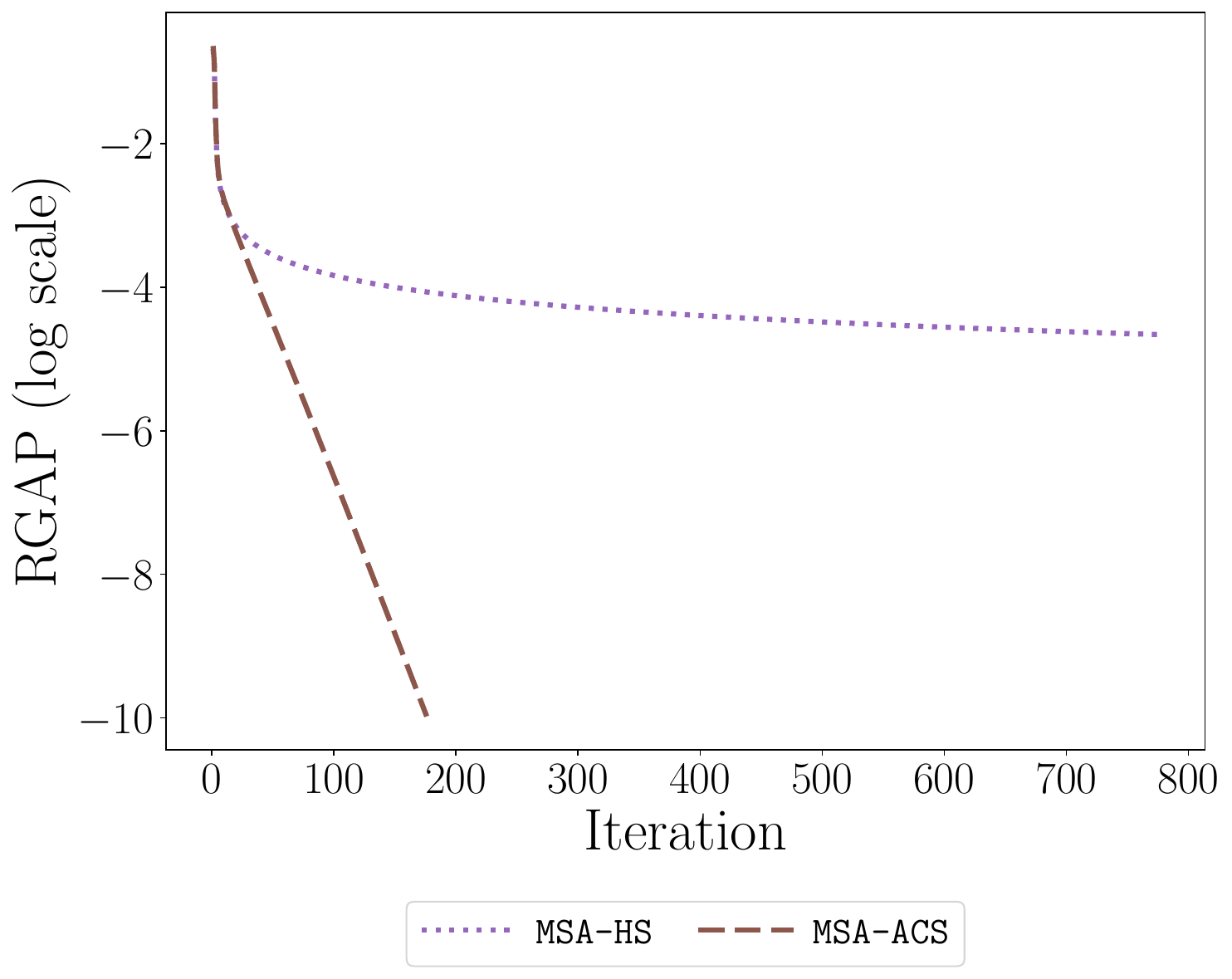}
        \caption{BMC}
        \label{fig:conv_ema}
    \end{subfigure}
    \caption{Convergence of RGAP (log scale) versus iteration for MSA with harmonic step-sizes and \texttt{MSA-ACS} on three small-scale networks ($\theta = 1.0$, $I_s = 10$).}
    \label{fig:convergence_small}
\end{figure}

We next examine the convergence rate of MSA with the \texttt{ACS} step-size. By Theorem~\ref{thm:msa_convergence}, the convergence rate near equilibrium should be approximately $1 - \hat{s}$, where $\hat{s}$ is the final constant step-size selected by Algorithm~\ref{alg:4_stepsize}. For sufficiently large $I_s$, such that the reset condition in Algorithm~\ref{alg:4_stepsize} is never triggered, we expect $\hat{s} = 1/I_s$. We test the convergence rate of MSA for $I_s \in \{5, 10, 20, 30\}$ and $\theta \in \{0.5, 1.0, 1.5\}$ by computing the ratio
\begin{align}
    \text{Rate}^{k} = \frac{\|\mb{h}^{k} - \hat{\mb{h}}\|}{\|\mb{h}^{k-1} - \hat{\mb{h}}\|} \label{eq:conv}
\end{align}
at each iteration $k$. The equilibrium solution $\hat{\mb{h}}$ in Equation~\eqref{eq:conv} is defined as the path flow iterate satisfying $\text{RGAP} \leq 10^{-10}$. As iterations progress, $\text{Rate}^{k}$ stabilizes, indicating linear convergence of $\mb{h}^k$. Table~\ref{tab:msa_boundgap_convergence} summarizes the observed convergence rates for all tested networks. For each combination of $I_s$ and $\theta$, the table reports three quantities: (1) $1 - 1/I_s$, the expected rate assuming the reset condition in Algorithm~\ref{alg:4_stepsize} is never triggered, (2) $1 - \hat{s}$, the expected rate based on the final constant step-size selected by Algorithm~\ref{alg:4_stepsize}, and (3) the observed rate, computed as the average of $\text{Rate}^k$ over the final $25$ iterations before RGAP reaches $10^{-9}$. Figure~\ref{fig:convergence_rate_sf} illustrates this procedure for the Anaheim network across three values of $\theta$.

\begin{figure}
    \centering
    \begin{subfigure}{0.32\textwidth}
        \centering\includegraphics[width=\linewidth]{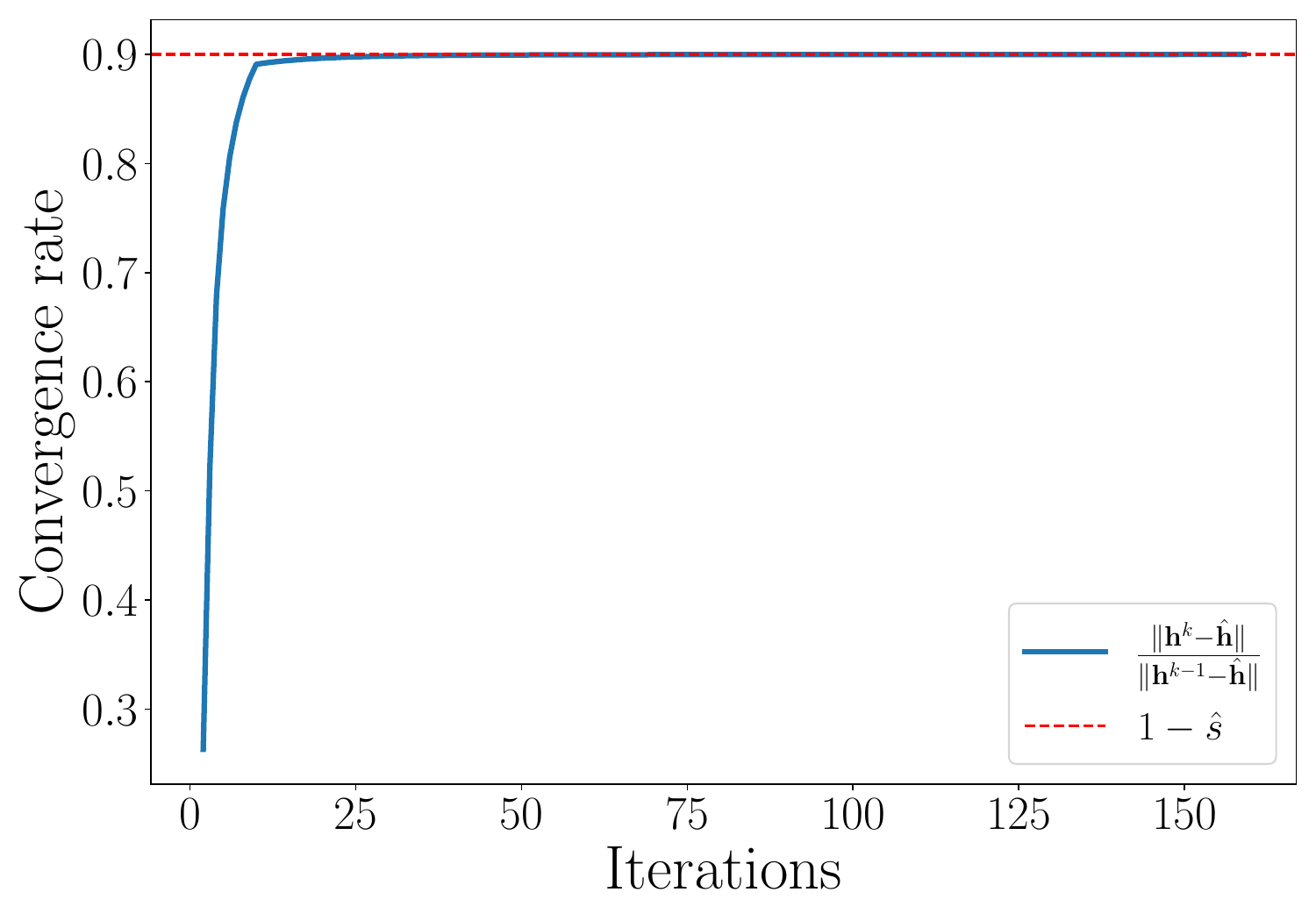}
        \caption{$\theta = 0.5$}
    \end{subfigure}\hfill
    \begin{subfigure}{0.32\textwidth}
        \centering\includegraphics[width=\linewidth]{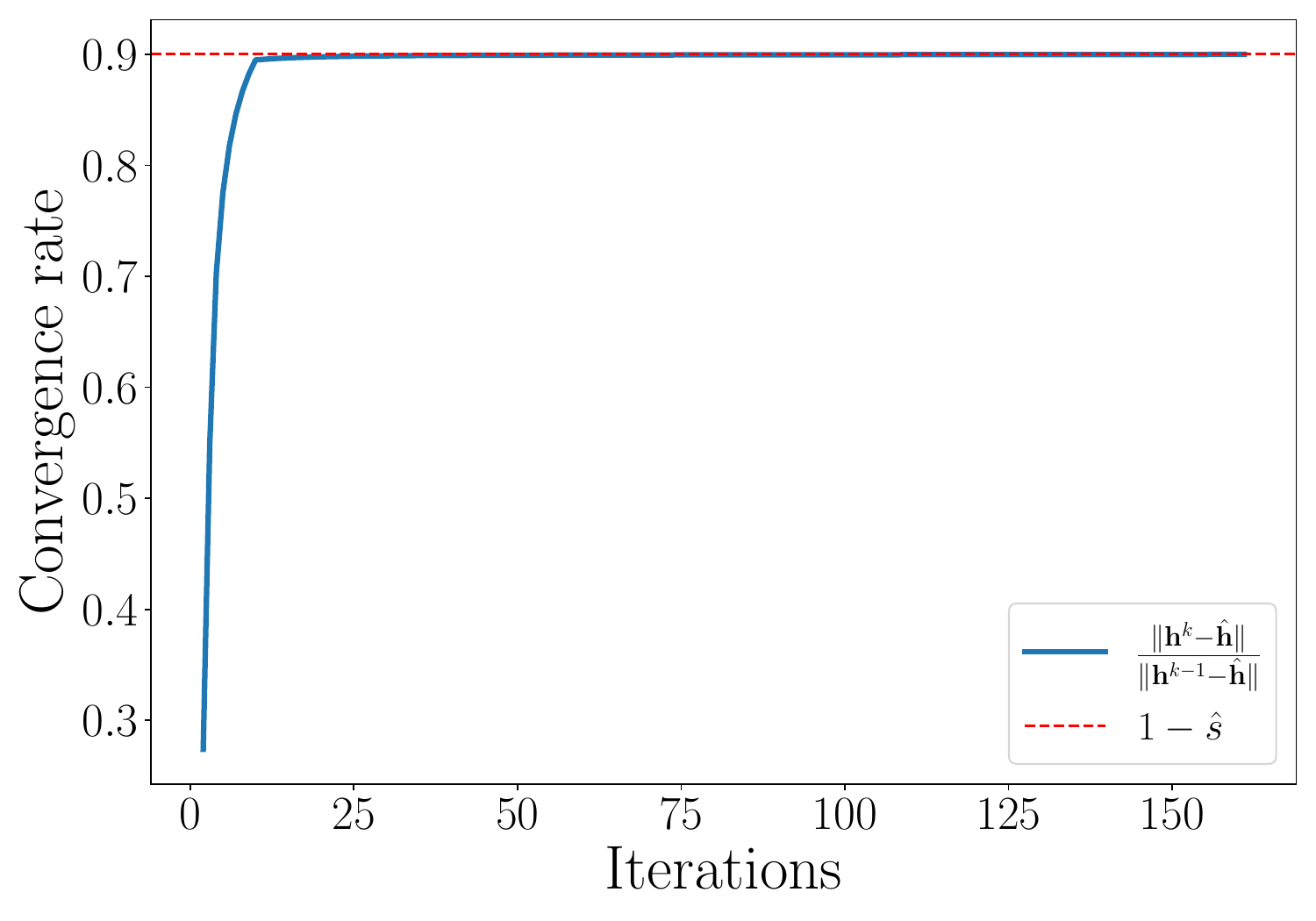}
        \caption{$\theta = 1.0$}
    \end{subfigure}\hfill
    \begin{subfigure}{0.32\textwidth}
        \centering\includegraphics[width=\linewidth]{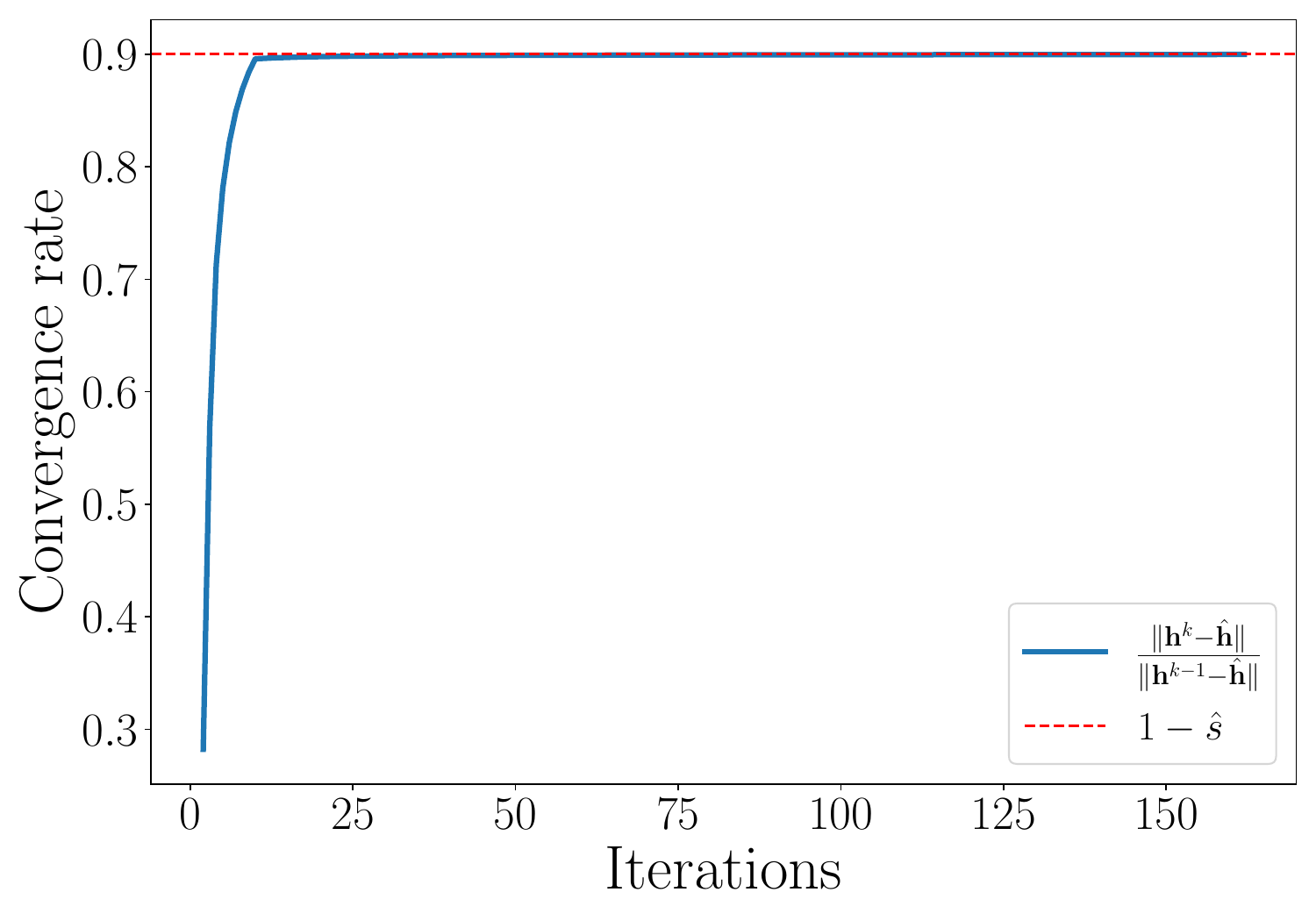}
        \caption{$\theta = 1.5$}
    \end{subfigure}
    \caption{Convergence rate of MSA for the Anaheim network across varying $\theta$ ($I_s = 10)$.}
    \label{fig:convergence_rate_sf}
\end{figure}

\begin{table}
\centering
\footnotesize
\begin{tabular}{@{}cl ccc ccc ccc ccc@{}}
\toprule
\multirow{2}{*}{$\theta$} & \multirow{2}{*}{Network} & \multicolumn{3}{c}{$I_s = 5$} & \multicolumn{3}{c}{$I_s = 10$} & \multicolumn{3}{c}{$I_s = 20$} & \multicolumn{3}{c}{$I_s = 30$} \\ \cmidrule(lr){3-5} \cmidrule(lr){6-8} \cmidrule(lr){9-11} \cmidrule(l){12-14}
 &  & $1-1/I_s$ & $1-\hat{s}$ & Obs. & $1-1/I_s$ & $1-\hat{s}$ & Obs. & $1-1/I_s$ & $1-\hat{s}$ & Obs. & $1-1/I_s$ & $1-\hat{s}$ & Obs. \\
\midrule
\multirow{4}{*}{0.5} & Sioux Falls & \textbf{0.80} & \textbf{0.95} & \textbf{0.95} & 0.90 & 0.90 & 0.90 & 0.95 & 0.95 & 0.95 & 0.97 & 0.97 & 0.97 \\
 & BMC & 0.80 & 0.80 & 0.80 & 0.90 & 0.90 & 0.90 & 0.95 & 0.95 & 0.95 & 0.97 & 0.97 & 0.97 \\
 & EMA & 0.80 & 0.80 & 0.80 & 0.90 & 0.90 & 0.90 & 0.95 & 0.95 & 0.95 & 0.97 & 0.97 & 0.97 \\
 & Anaheim & 0.80 & 0.80 & 0.80 & 0.90 & 0.90 & 0.90 & 0.95 & 0.95 & 0.95 & 0.97 & 0.97 & 0.97 \\
\midrule
\multirow{4}{*}{1.0} & Sioux Falls & \textbf{0.80} & \textbf{0.95} & \textbf{0.95} & \textbf{0.90} & \textbf{0.97} & \textbf{0.97} & 0.95 & 0.95 & 0.95 & 0.97 & 0.97 & 0.97 \\
 & BMC & 0.80 & 0.80 & 0.80 & 0.90 & 0.90 & 0.90 & 0.95 & 0.95 & 0.95 & 0.97 & 0.97 & 0.97 \\
 & EMA & 0.80 & 0.80 & 0.80 & 0.90 & 0.90 & 0.90 & 0.95 & 0.95 & 0.95 & 0.97 & 0.97 & 0.97 \\
 & Anaheim & 0.80 & 0.80 & 0.80 & 0.90 & 0.90 & 0.90 & 0.95 & 0.95 & 0.95 & 0.97 & 0.97 & 0.97 \\
\midrule
\multirow{4}{*}{1.5} & Sioux Falls & \textbf{0.80} & \textbf{0.95} & \textbf{0.95} & \textbf{0.90} & \textbf{0.97} & \textbf{0.97} & 0.95 & 0.95 & 0.95 & 0.97 & 0.97 & 0.97 \\
 & BMC & 0.80 & 0.80 & 0.80 & 0.90 & 0.90 & 0.90 & 0.95 & 0.95 & 0.95 & 0.97 & 0.97 & 0.97 \\
 & EMA & 0.80 & 0.80 & 0.80 & 0.90 & 0.90 & 0.90 & 0.95 & 0.95 & 0.95 & 0.97 & 0.97 & 0.97 \\
 & Anaheim & 0.80 & 0.80 & 0.80 & 0.90 & 0.90 & 0.90 & 0.95 & 0.95 & 0.95 & 0.97 & 0.97 & 0.97 \\
\bottomrule
\end{tabular}
\caption{Observed and expected convergence rates of the MSA algorithm across different networks and $\theta$ values for various initial step-size parameters ($I_s$). Three metrics are reported for each configuration: (1) $1 - 1/I_s$: the theoretically expected convergence rate assuming the initial constant step-size of $1/I_s$ remains unchanged; (2) $1-\hat{s}$: the theoretically expected rate based on the final stable constant step-size $\hat{s}$; (3) Obs.: the observed rate averaged over the final 25 iterations until RGAP $\leq 10^{-9}$.\label{tab:msa_boundgap_convergence}}
\end{table}

Several observations follow from Table~\ref{tab:msa_boundgap_convergence}. First, the observed convergence rate is consistent across networks and matches $1-\hat{s}$, where $\hat{s}$ is the final constant step-size used by the algorithm. Second, the step-size was rarely reduced from its initial value of $1/I_s$, with the only exceptions occurring in a few cases for the Sioux Falls network (shown in bold). For $I_s = 20$ in particular, all networks converged at exactly rate $1-1/I_s$. These results provide empirical support for the theoretical findings of Section~\ref{sec:bound}.

The Sioux Falls network offers additional insight into convergence behavior of MSA with the \texttt{ACS} step-size. When $I_s = 5$ (initial step-size $0.2$), the reset condition of Algorithm~\ref{alg:4_stepsize} was triggered for all $\theta$ values, meaning none maintained $\hat{s} = 1/I_s$. For $I_s = 10$, only $\theta = 0.5$ maintained $\hat{s} = 1/I_s$, while $\theta = 1.0$ and $\theta = 1.5$ required reduction. Only at $I_s = 20$ did all three $\theta$ values maintain $\hat{s} = 1/I_s$. The need to reduce $\hat{s}$ for larger $\theta$ suggests that the maximum admissible constant step-size decreases as $\theta$ increases. This is consistent with Theorem~\ref{thm:msa_convergence}, where the upper bound on the admissible step-size is inversely proportional to $\theta$. A further explanation comes from the behavior of the logit model itself. As $\theta$ increases, path choice probabilities concentrate on fewer paths, making the target path flows vary less smoothly with respect to current flows and weakening the linear approximation of the logit mapping near equilibrium. In the limiting case $\theta \to \infty$, target path flows approach all-or-nothing assignments, which are discontinuous in the current path flows.

To understand why this behavior appears only for the Sioux Falls network, we examine the admissible range of constant step-sizes that guarantee linear convergence using Inequality~\eqref{eq:step_size_b} from Theorem~\ref{thm:msa_convergence}. Table~\ref{tab:eigenvalues} reports the smallest and largest eigenvalues of $\mathcal{K}(\hat{\mb{h}})$ for all tested networks at $\theta = 0.5$. As expected, the largest eigenvalue is zero in all cases. Using the smallest eigenvalue, we compute $s_g = 2/(2-\lambda_{\min})$, the largest step-size guaranteeing linear convergence near equilibrium. For Sioux Falls at $\theta = 0.5$, we compute $s_g = 0.14$. This explains the boldfaced entries in Table~\ref{tab:msa_boundgap_convergence}. Setting $I_s = 5$ gives $1/I_s = 0.2 > s_g$, which fails to achieve rate $1-1/I_s$, whereas $I_s = 10$ gives $1/I_s = 0.1 < s_g$, recovering exact linear convergence. For all other networks, $s_g$ was large enough that no reduction was ever required.

Since $s_g$ depends on the eigenvalues of $\mathcal{K}(\hat{\mb{h}})$ at equilibrium, it is not known before solving traffic assignment. Inequality~\eqref{eq:step_size_c} provides a conservative step-size that is guaranteed to lie below $s_g$, and this conservative value decreases with $\theta$, $\|\mathcal{C}'(\mb{\hat{h}})\|$, and $\max d_{OD}$. Since $\mathcal{C}'(\mb{h})$ factors as $\mb{D}^T \mathcal{T}'(\mb{a})\, \mb{D}$, submultiplicativity of the spectral norm gives $\left\|\mathcal{C}'(\mb{h})\right\| \le \|\mb{D}\|^2 \|\mathcal{T}'(\mb{a})\|$, so the bound on $s_g$ decreases as $\|\mb{D}\|$ and $\|\mathcal{T}'(\mb{a})\|$ grow. A (loose) upper bound on $\|\mathcal{T}'(\mb{a})\|$ at equilibrium can be obtained by evaluating $\mathcal{T}'$ at $\mb{a}_{\max}$, the link flow vector in which every link flow equals the total demand in the network, provided each $\tau_l$ is convex, which BPR functions are. The factors $\|\mb{D}\|$, $\|\mathcal{T}'(\mb{a}_{\max})\|$, and $\max d_{OD}$ are reported in Table~\ref{tab:eigenvalues} for all tested networks at $\theta = 0.5$. Sioux Falls, for instance, has the largest product $\max d_{OD} (\|\mb{D}\|^2 \|\mathcal{T}'(\mb{a}_{\max})\|)$ forcing the upper bound on $s_g$ to be small. One could initialize MSA with this conservative value, but doing so sacrifices fast initial progress of MSA. The bound in \eqref{eq:step_size_c} is also loose, and hence the actual $|\lambda_{\min}|$ is substantially smaller than the upper bound, making the bound on $s_g$ far more pessimistic than necessary. This is precisely why an adaptive constant step-size like Algorithm~\ref{alg:4_stepsize} becomes necessary.


\begin{table}
\centering
\begin{tabular}{l ccccccc}
\toprule
Network & $\max d_{OD}$ & $\|\mb{D}\|$ & $\|\mathcal{T}'(\mb{a}_{\max})\|$ & $\tilde{s}_g$ & $\lambda_{\max}$  & $\lambda_{\min}$  & $s_g$ \\
\midrule
Sioux Falls & $4400.0$  & $82.4$  & $432.5$ & $3.1 \times 10^{-10}$ & $0.0$ & $-12.63$ & $0.14$ \\
BMC         & $97.7$    & $195.6$ & $887.7$ & $1.2 \times 10^{-9}$  & $0.0$ & $-2.80$  & $0.42$ \\
EMA         & $957.7$   & $111.6$ & $118.1$ & $2.8 \times 10^{-9}$  & $0.0$ & $-1.27$  & $0.61$ \\
Anaheim     & $2106.7$  & $191.0$ & $32.8$  & $1.6 \times 10^{-9}$  & $0.0$ & $-1.26$  & $0.61$ \\
\bottomrule
\end{tabular}
\caption{Factors influencing $s_g$ and eigenvalues of $\mathcal{K}(\mb{\hat{h}})$ at $\theta = 0.5$. The vector $\mb{a}_{\max}$ denotes the link flow vector in which every link flow equals $\sum_{OD} d_{OD}$,  $\|\mathcal{T}'(\mb{a}_{\max})\|$ is the spectral norm of the marginal link cost matrix evaluated at this vector. The column $\tilde{s}_g$ reports the conservative step-size obtained from Inequality~\eqref{eq:step_size_c}, computed using $\|\mathcal{C}'(\mb{\hat{h}})\| \,\,\leq \|\mb{D}\|^2 \|\mathcal{T}'(\mb{a}_{\max})\|$. ${\lambda_{\max}}$ and ${\lambda_{\min}}$ report the maximum and minimum eigenvalues of $\mathcal{K}(\mb{\hat{h}})$ at equilibrium, respectively, and $s_g$ represents the maximum step-size for linear convergence near equilibrium computed from $\lambda_{\min}$ via Inequality~\eqref{eq:step_size_b}. The gap between $s_g$ and $\tilde{s}_g$ illustrates how loose the bound in~\eqref{eq:step_size_c} is in practice.\label{tab:eigenvalues}}
\end{table}

\subsection{Analysis of computational performance}\label{subsec:res_newton}

We present computational performance for the traffic assignment algorithms derived in this paper: \texttt{Newton} and \texttt{MSA-ACS}. Since \texttt{MSA-ACS} was shown to be globally convergent, the step-size $s_k$ at iteration $k$ is obtained directly using Algorithm~\ref{alg:4_stepsize}, and this step-size is then applied within the MSA algorithm (Algorithm~\ref{alg:2_msa}). The \texttt{Newton} method, on the other hand, is only locally convergent. To ensure global convergence, we pair it with the globally convergent BB step-size rule of \cite{du2021faster}, adaptively shifting to the \texttt{Newton} method whenever possible. For any feasible path flow $\mb{h}^k$ close to equilibrium at iteration $k$, the next iterate $\mb{h}^{k+1}$ is then obtained by applying Algorithm~\ref{alg:newton_step}. We compare our two proposed methods against the standard MSA harmonic step-size sequence (\texttt{MSA-HS}) and the standalone BB step-size rule of \cite{du2021faster}.

The reasons for choosing the BB step-sizes from \cite{du2021faster} as a benchmark and as the globally convergent algorithm for the \texttt{Newton} method are twofold. First, these step-sizes incorporate partial second-order information while maintaining per-iteration times similar to first-order methods, and the authors have shown that they outperform popular rules such as Armijo and self-regulated averaging in terms of computational runtimes. Second, the BB step-sizes can be used directly within Algorithm~\ref{alg:2_msa} as an alternative step-size rule, which allows for a fair comparison of computation times since the underlying algorithmic framework remains identical across all tests. The two step-sizes proposed in \cite{du2021faster}, $s_k^{BB1}$ and $s_k^{BB2}$, are shown in Equations~\eqref{eq:BB1} and~\eqref{eq:BB2}. When these are used within MSA, we denote the resulting methods as \texttt{BB1} and \texttt{BB2}, respectively.
\begin{align}
    s_k^{BB1} &= \frac{(\mb{h}^k - \mb{h}^{k-1})^{T}\bigl[(\mb{h}^k - \mb{h}^{k-1}) - (\mathcal{L}(\mb{h}^k) - \mathcal{L}(\mb{h}^{k-1}))\bigr]}{\bigl\|(\mb{h}^k - \mb{h}^{k-1}) - (\mathcal{L}(\mb{h}^k) - \mathcal{L}(\mb{h}^{k-1}))\bigr\|^{2}} \label{eq:BB1}\\
    s_k^{BB2} &= \frac{\|\mb{h}^k - \mb{h}^{k-1}\|^{2}}{(\mb{h}^k - \mb{h}^{k-1})^{T}\bigl[(\mb{h}^k - \mb{h}^{k-1}) - (\mathcal{L}(\mb{h}^k) - \mathcal{L}(\mb{h}^{k-1}))\bigr]} \label{eq:BB2}
\end{align}

Although \cite{du2021faster} showed BB step-sizes to outperform other line search approaches, BB steps reduce the objective function non-monotonically, which is known to induce oscillations near the equilibrium \citep{dai2002r}. This issue is further exacerbated when the Jacobian of the logit mapping is ill-conditioned \citep{an2025regularized}. As shown in Theorem~\ref{thm:spectrum}, this ill-conditioning increases as demand and $\theta$ increase in the network. The extent of these oscillation issues near equilibrium was likely not observed in \cite{du2021faster} because their test networks (Winnipeg Asymmetric and Chicago Sketch) were medium-sized, with demand ranging from only 0.6 to 1.4 times the base demand. In reality, the trip tables for many networks in \cite{transportationNetworks} are dated and contain much lower demand values than current conditions. For instance, \cite{transportationNetworks} notes that the original data for the Chicago Sketch network provides low levels of congestion that are not realistic for the Chicago region, and recommends doubling the original trip table for algorithm testing. Furthermore, near equilibrium, the denominators in Equations~\eqref{eq:BB1} and \eqref{eq:BB2} can evaluate to exactly zero in floating point precision, causing numerical failures. To address these limitations, we pair \texttt{BB1} and \texttt{BB2} with a fallback to \texttt{ACS} step-sizes (Algorithm~\ref{alg:4_stepsize}) whenever \texttt{BB1} or \texttt{BB2} numerically fail. We refer to these rules as \texttt{BB1-ACS} and \texttt{BB2-ACS}.

The \texttt{Newton} step-size proposed in this study uses full second-order information of the logit mapping, but is only locally convergent. Hence, it must be paired with a globally convergent solver. We pair it with \texttt{BB1-ACS}, and refer to this combined rule as \texttt{BB-Newton}. We choose \texttt{BB1} over \texttt{BB2} since \texttt{BB1} step-sizes are shorter and known to produce fewer oscillations~\citep{dai2002r}. Following the discussion in Section~\ref{subsec:4_convergence} on when to switch from a globally convergent algorithm to \texttt{Newton}, we apply Algorithm~\ref{alg:bb_newton} to obtain global convergence together with local quadratic convergence. In Algorithm~\ref{alg:bb_newton}, we initialize a decreasing threshold sequence $\{\tau_j\}_{j=1}^{J}$ that determines when to check if we should shift from \texttt{BB1-ACS} to the \texttt{Newton} step. Starting from \texttt{BB1-ACS}, each time the RGAP crosses a new threshold, a \texttt{Newton} step is attempted. If it fails, we revert to \texttt{BB1-ACS}. Once a \texttt{Newton} step is accepted, we continue taking \texttt{Newton} steps unless one fails. In our experiments, a \texttt{Newton} step once accepted never failed, although in pathological instances it is possible for a \texttt{Newton} step to fall back to \texttt{BB1-ACS}. Nonetheless, the convergence guarantee established in Section~\ref{subsec:4_convergence} ensures that eventually only \texttt{Newton} steps are taken in a small enough neighborhood of SUE.

The threshold sequence $\{\tau_j\}_{j=1}^{J}$ in Algorithm~\ref{alg:bb_newton} has little effect on performance as long as the thresholds are spread out. This is because only one \texttt{Newton} step is attempted between consecutive thresholds until one is accepted. As long as the sequence does not check for a \texttt{Newton} step too frequently and allows gap reduction between successive checks, the overall behavior remains the same. We chose the threshold sequence as $\{10^{-3}, 10^{-4}, 10^{-5}, \ldots, 10^{-10}\}$.

\begin{algorithm}
\caption{\texttt{BB-Newton}}
\label{alg:bb_newton}
\begin{algorithmic}[1]
\State \textbf{Input:} Initial feasible path flow $\mb{h}^0$
\State \textbf{Hyperparameters:} Threshold sequence
\State Initialize $\texttt{newtonMode} \gets \textsc{False}$
\For{$k = 0,1,2,\dots$}
    \If{$\texttt{newtonMode}$ is True \textbf{or} RGAP has crossed a new threshold}
        \State Attempt $\mb{h}^{k+1} \gets \texttt{Newton}(\mb{h}^k)$ via Algorithm~\ref{alg:newton_step}
        \State Set \texttt{newtonMode} as True if Newton step accepted, else False.
    \EndIf
    \If{$\texttt{newtonMode}$ is False} 
        \State Compute $s_k$ using \texttt{BB1-ACS}.
        \State $\mb{h}^{k+1} \gets \mb{h}^k + s_k\bigl(\mathcal{L}(\mb{h}^k) - \mb{h}^k\bigr)$
    \EndIf
    \State If RGAP is sufficiently small, \textbf{terminate}.
\EndFor
\State \textbf{Return:} $\mb{h}^k$
\end{algorithmic}
\end{algorithm}

\begin{table}
\centering
\small
\begin{tabular}{p{1.5cm} p{6.5cm} p{2.2cm} l}
\toprule
\textbf{Method} & \textbf{Description} & \textbf{Asymptotic Convergence} & \textbf{Proposed by} \\
\midrule
\texttt{MSA-HS} &
Harmonic step-size sequence: $s_k = 1/k$ in iteration $k$ in Algorithm~\ref{alg:2_msa} &
Sublinear &
\cite{sheffi1982algorithm} \\
\texttt{MSA-ACS} &
Adaptive constant step scheme: uses step-size from Algorithm~\ref{alg:4_stepsize} in Algorithm~\ref{alg:2_msa} &
Linear &
This paper \\
\texttt{BB1} &
Barzilai-Borwein type 1: Use $s_k^{BB1}$ clipped to $[0, 1]$ for numerical stability in Algorithm~\ref{alg:2_msa}. &
Linear &
\cite{du2021faster} \\
\texttt{BB2} &
Barzilai-Borwein type 2: Use $s_k^{BB2}$ clipped to $[0, 1]$ for numerical stability in Algorithm~\ref{alg:2_msa}. &
Linear &
\cite{du2021faster} \\
\texttt{BB1-ACS} &
Uses BB1 when well-defined; reverts to the \texttt{ACS} (Algorithm~\ref{alg:4_stepsize}) when BB1 is numerically undefined. &
Linear &
This paper + \cite{du2021faster} \\
\texttt{BB2-ACS} &
Uses BB2 when well-defined; reverts to the \texttt{ACS} (Algorithm~\ref{alg:4_stepsize}) when BB2 is numerically undefined. &
Linear &
This paper + \cite{du2021faster} \\
\texttt{BB-Newton} & Uses Algorithm~\ref{alg:bb_newton} to adaptively shift between \texttt{BB1-ACS} and \texttt{Newton} &
Quadratic &
This paper + \cite{du2021faster}\\
\bottomrule
\end{tabular}
\caption{Description of step-size determination rules used in computational experiments.}
\label{tab:steprules}
\end{table}

In summary, we test a total of seven methods: \texttt{MSA-HS}, \texttt{MSA-ACS}, \texttt{BB-Newton}, \texttt{BB1}, \texttt{BB2}, \texttt{BB1-ACS}, and \texttt{BB2-ACS}. We summarize each of these rules in Table~\ref{tab:steprules}. We tested the computational performance of all seven step-size rules on the networks listed in Table~\ref{tab:4_res} under two scenarios: the base demand, and a scenario with demand values multiplied by a factor of two. For each case, the traffic assignment was run until reaching an RGAP of $10^{-10}$, and a $\theta$ value of $1.0$ was used for all experiments based on parameters from recent studies \citep{wang2025enhancing}.

Different runtime limits were imposed based on network scale: a two-second limit for small-scale networks, one minute for medium-scale networks, and 15 minutes for large-scale networks. The only exception was the Chicago Regional network with doubled demand, where no step-size rule reached the $10^{-10}$ threshold in RGAP within 15 minutes. In this case, the computational budget was extended to 30 minutes. The time budget was measured starting from the first iteration, so the time required to compute path sets, the link-path incidence matrix, and the initial logit loading on free-flow travel times was excluded from the timing.

The convergence behavior for three of the tested networks is illustrated in Figures~\ref{fig:conv_winnipeg_1x} through \ref{fig:conv_chicago_regional_2x}. The plots for \texttt{BB1} and \texttt{BB2} are omitted, as their convergence behavior is identical to that of \texttt{BB1-ACS} and \texttt{BB2-ACS} whenever they remain numerically stable. When they do stall, it is due to a division by zero in machine precision, and their curves simply match the \texttt{ACS} variants up to the point of failure. Several observations can be drawn from these results. First, \texttt{MSA-HS} exhibits sublinear convergence, while \texttt{MSA-ACS} shows linear convergence even far from equilibrium. However, even linear convergence at a slow rate requires a large number of iterations to reach the target gap. Second, the benefit of utilizing full second-order information in \texttt{BB-Newton} is evident, as it converges in significantly fewer iterations than the other step-size rules. Both phases of \texttt{BB-Newton} are shown in the plots. Once the shift to \texttt{Newton} step-sizes occurs, the algorithm reaches $10^{-10}$ in just $4$--$5$ iterations, indicating fast superlinear convergence. Finally, the \texttt{BB1-ACS} and \texttt{BB2-ACS} step-sizes struggle particularly when demand is high. This behavior is expected, since increasing the demand makes the problem more ill-conditioned, which in turn makes the use of full second-order information more valuable.

\begin{figure}
    \centering
    \begin{subfigure}[b]{0.48\textwidth}
        \centering
        \includegraphics[width=\textwidth]{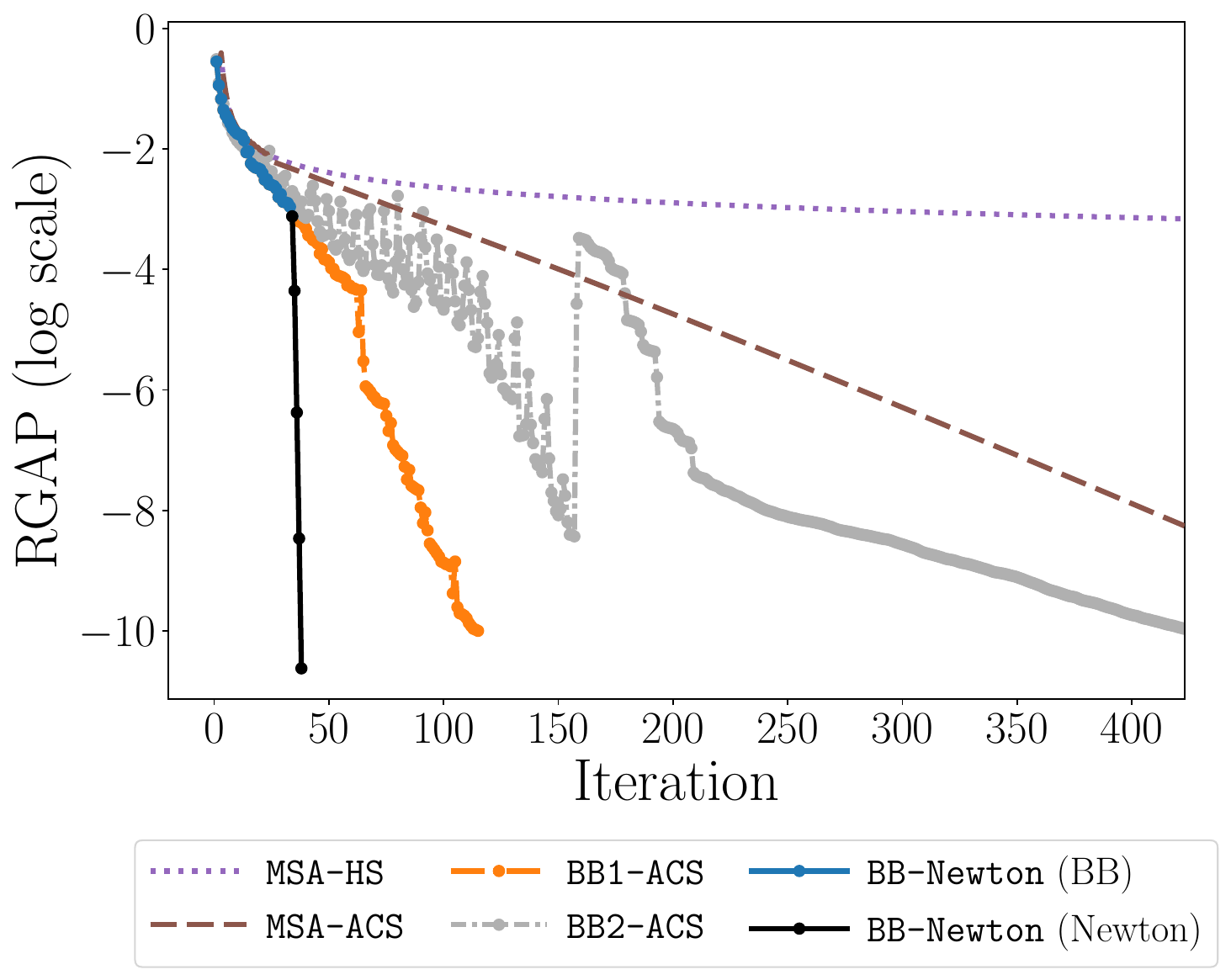}
        \caption{Winnipeg Asymmetric, 1x demand}
        \label{fig:conv_winnipeg_1x}
    \end{subfigure}
    \hfill
    \begin{subfigure}[b]{0.48\textwidth}
        \centering
        \includegraphics[width=\textwidth]{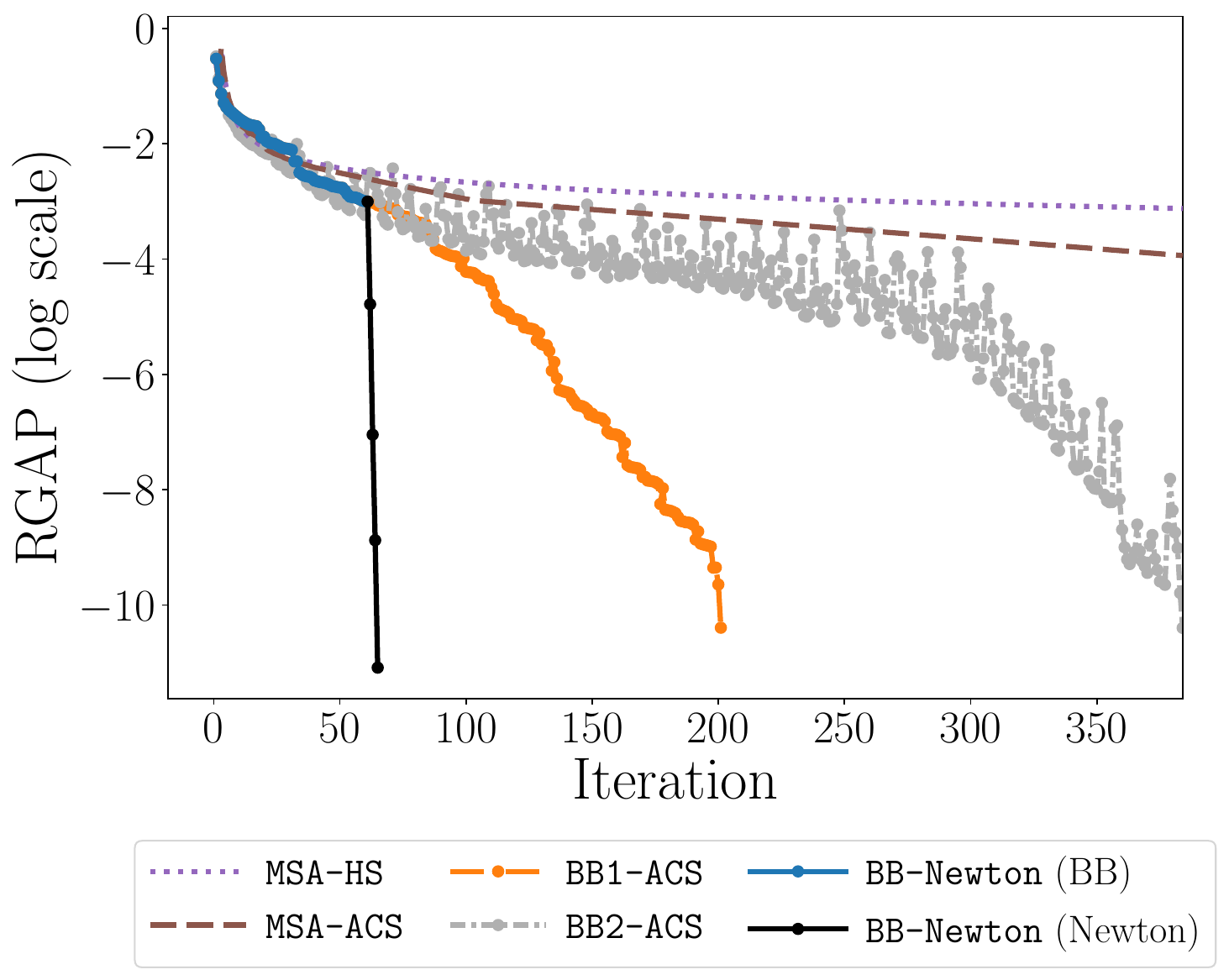}
        \caption{Winnipeg Asymmetric, 2x demand}
        \label{fig:conv_winnipeg_2x}
    \end{subfigure}

    \medskip

    \begin{subfigure}[b]{0.48\textwidth}
        \centering
        \includegraphics[width=\textwidth]{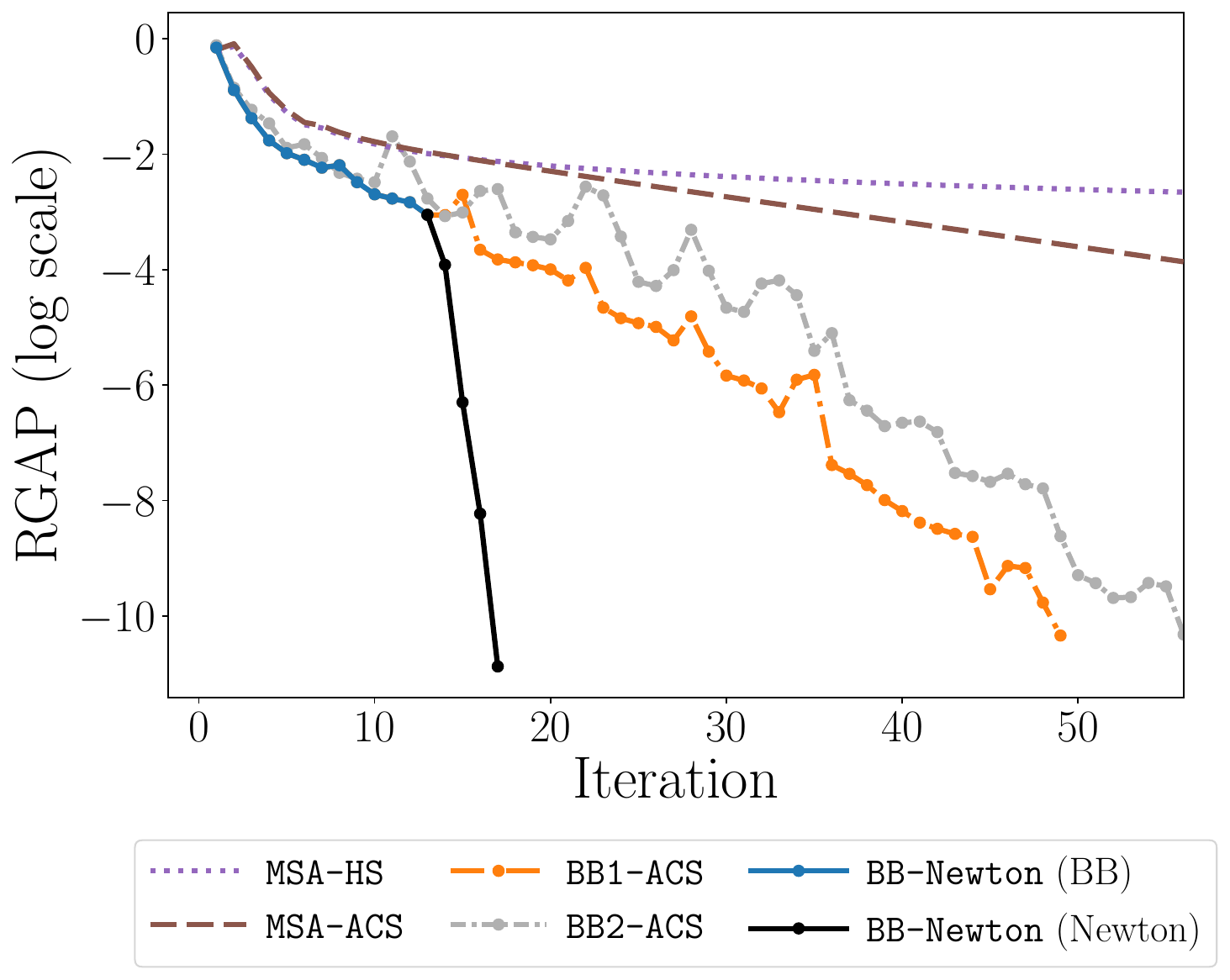}
        \caption{Chicago Sketch, 1x demand}
        \label{fig:conv_chicago_sketch_1x}
    \end{subfigure}
    \hfill
    \begin{subfigure}[b]{0.48\textwidth}
        \centering
        \includegraphics[width=\textwidth]{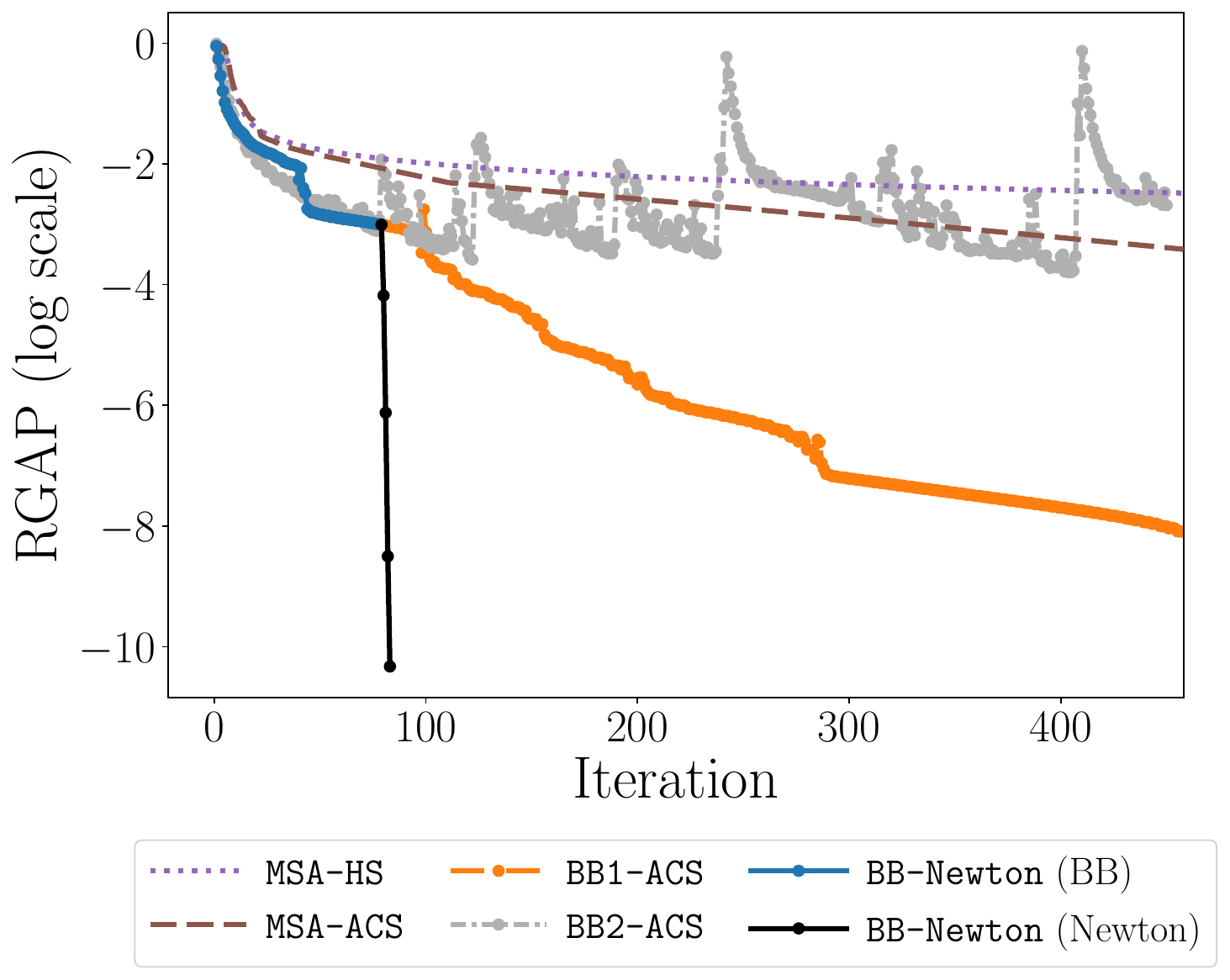}
        \caption{Chicago Sketch, 2x demand}
        \label{fig:conv_chicago_sketch_2x}
    \end{subfigure}

    \medskip

    \begin{subfigure}[b]{0.48\textwidth}
        \centering
        \includegraphics[width=\textwidth]{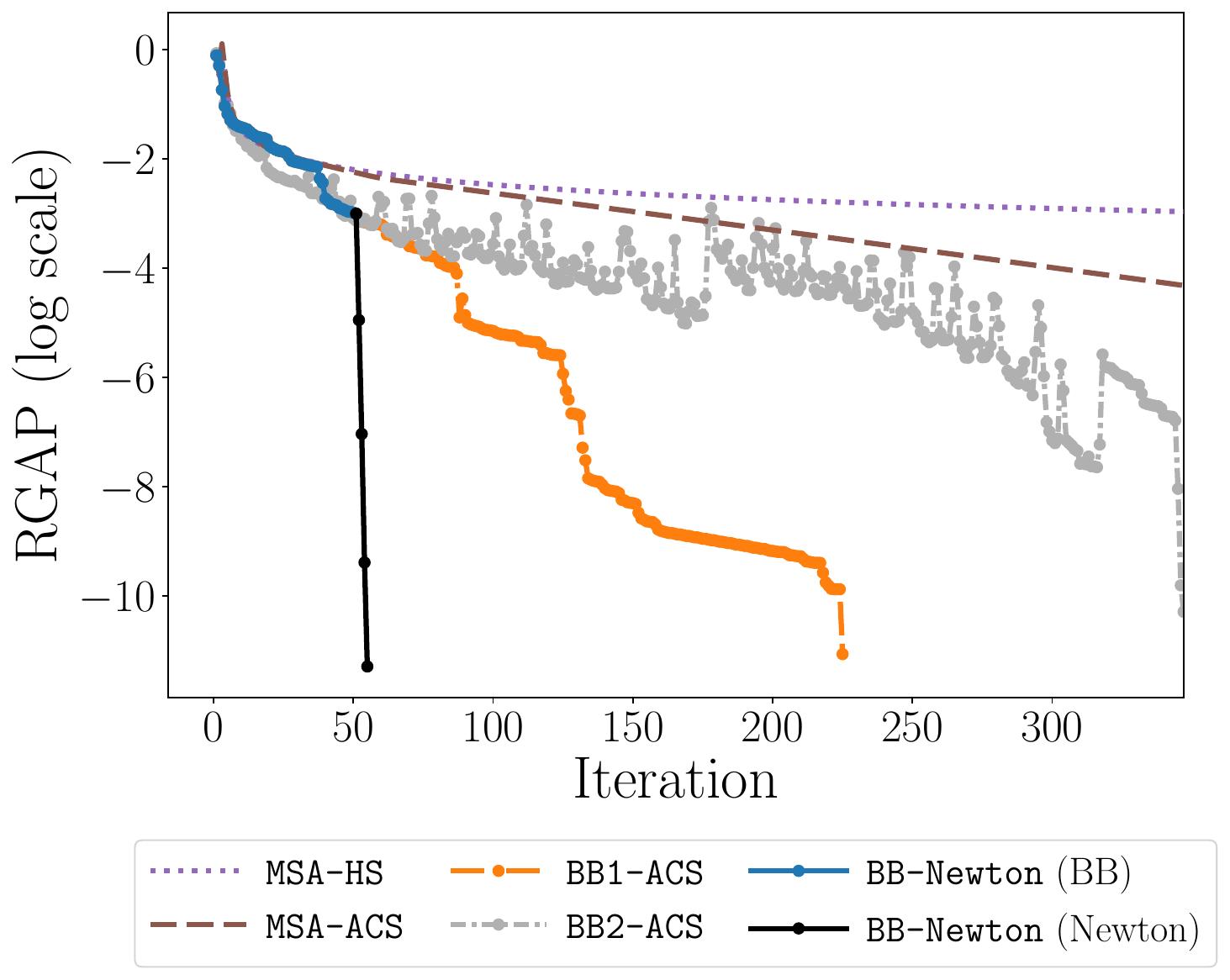}
        \caption{Chicago Regional, 1x demand}
        \label{fig:conv_chicago_regional_1x}
    \end{subfigure}
    \hfill
    \begin{subfigure}[b]{0.48\textwidth}
        \centering
        \includegraphics[width=\textwidth]{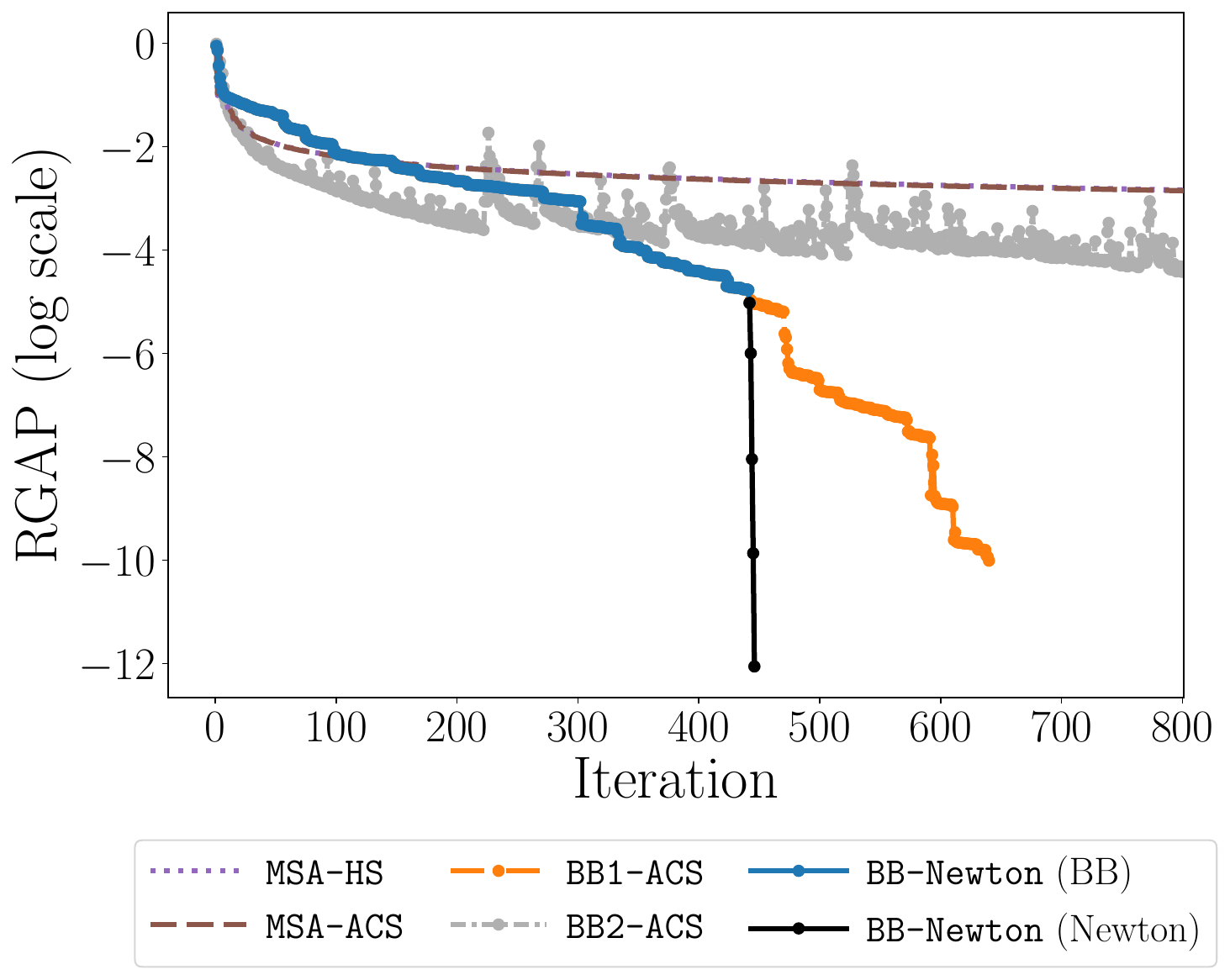}
        \caption{Chicago Regional, 2x demand}
        \label{fig:conv_chicago_regional_2x}
    \end{subfigure}
    \caption{Convergence of RGAP (log scale) versus iteration for \texttt{BB-Newton, MSA-HS, MSA-ACS, BB1-ACS}, and \texttt{BB2-ACS} on Winnipeg Asymmetric, Chicago Sketch, and Chicago Regional networks at base (1$\times$) and doubled (2$\times$) demand levels.}
    \label{fig:convergence_grid}
\end{figure}

Although the \texttt{BB-Newton} step provides fast convergence, the quadratic convergence only takes effect close to equilibrium. To verify the order of convergence empirically, we define the estimated order of convergence \citep{GRAUSANCHEZ2010472} $\hat{O}$ as:
\begin{equation}
    \hat{O} = \frac{1}{|\mathbb{I}|} \sum_{k \in \mathbb{I}} \frac{\ln(\text{RGAP}_k / \text{RGAP}_{k-1})}{\ln(\text{RGAP}_{k-1} / \text{RGAP}_{k-2})},
    \label{eq:empirical_order}
\end{equation}
where $\text{RGAP}_k$ is the relative gap at iteration $k$ and $\mathbb{I}$ is the set of iterations for which a Newton step was taken.

An empirical order of $\hat{O} = 2$ indicates quadratic convergence, $1 < \hat{O} < 2$ indicates superlinear convergence, and $\hat{O} = 1$ denotes linear convergence. Table~\ref{tab:convergence_summary_newton} summarizes the empirical order of convergence across all tested networks. For the majority of cases, the observed order exceeds one, signifying superlinear convergence. The only exception is the Berlin Center network with demand doubled, which exhibits an order of $0.8$. We note that the \texttt{Newton} step-size rule guarantees quadratic convergence only when the iterates are sufficiently close to the SUE. Although the condition used to switch to a Newton step is necessary for this behavior, it is not sufficient to ensure that the iterates have entered the region where local quadratic convergence takes effect. This explains the observed superlinear convergence.

\begin{table}
\centering
\footnotesize
\begin{tabular}{@{}lcccccccc@{}}
\toprule
\multirow{2}{*}{Network} & \multicolumn{4}{c}{Demand $1\times$} & \multicolumn{4}{c}{Demand $2\times$} \\
\cmidrule(lr){2-5} \cmidrule(l){6-9}
 & Iters & Newton iters & Newton start gap & $\hat{O}$ & Iters & Newton iters & Newton start gap & $\hat{O}$ \\
\midrule
Sioux Falls & 38 & 5 & $10^{-3}$ & 1.26 & 182 & 5 & $10^{-3}$ & 1.43 \\
BMC & 16 & 5 & $10^{-4}$ & 1.51 & 80 & 5 & $10^{-5}$ & 6.08 \\
EMA & 8 & 4 & $10^{-3}$ & 1.11 & 18 & 5 & $10^{-3}$ & 1.23 \\
Anaheim & 8 & 4 & $10^{-4}$ & 1.21 & 19 & 5 & $10^{-3}$ & 1.24 \\ \midrule
Chicago Sketch & 17 & 5 & $10^{-3}$ & 1.64 & 83 & 5 & $10^{-3}$ & 1.22 \\
Berlin Center & 30 & 5 & $10^{-3}$ & 1.18 & 313 & 4 & $10^{-4}$ & 0.78 \\
Winnipeg Asymmetric & 38 & 5 & $10^{-3}$ & 1.23 & 65 & 5 & $10^{-3}$ & 1.10 \\ \midrule
Philadelphia & 32 & 5 & $10^{-3}$ & 1.26 & 347 & 4 & $10^{-7}$ & 1.98 \\
Chicago Regional & 55 & 4 & $10^{-3}$ & 1.10 & 446 & 5 & $10^{-5}$ & 1.40 \\
\bottomrule
\end{tabular}
\caption{Order of convergence for \texttt{Newton} method across demand multipliers. For each demand level we report: iterations to reach RGAP $10^{-10}$, number of Newton-accepted iterations, the RGAP at which the first Newton step was accepted, and the empirical order of convergence $\hat{O}$.}
\label{tab:convergence_summary_newton}
\end{table}

Finally, we present the computational times required to reach an RGAP of $10^{-10}$ for each of the seven step-size rules. Figure~\ref{fig:runtime_grid} plots computational runtime against the gap level reached for three networks, under both base demand and demand scaled by a factor of two, across five step-size rules. As in Figure~\ref{fig:convergence_grid}, \texttt{BB1} and \texttt{BB2} are omitted in favor of \texttt{BB1-ACS} and \texttt{BB2-ACS}. The results for all step-sizes are summarized for small, medium, and large scale networks in Tables~\ref{tab:convergence_small}, \ref{tab:convergence_medium}, and \ref{tab:convergence_large}, respectively. For each tested scenario, the tables report the computational time to reach an RGAP of $10^{-10}$ and the RGAP achieved at the final iteration. If a step-size rule did not converge within the budget, this is denoted by ``--'' alongside the RGAP in the final iteration. Instances where the run stopped due to numerical issues are marked with an asterisk. The computational budget allocated for each set of experiments is specified in the table captions.

\begin{figure}
    \centering
    \begin{subfigure}[b]{0.45\textwidth}
        \centering
        \includegraphics[width=\textwidth]{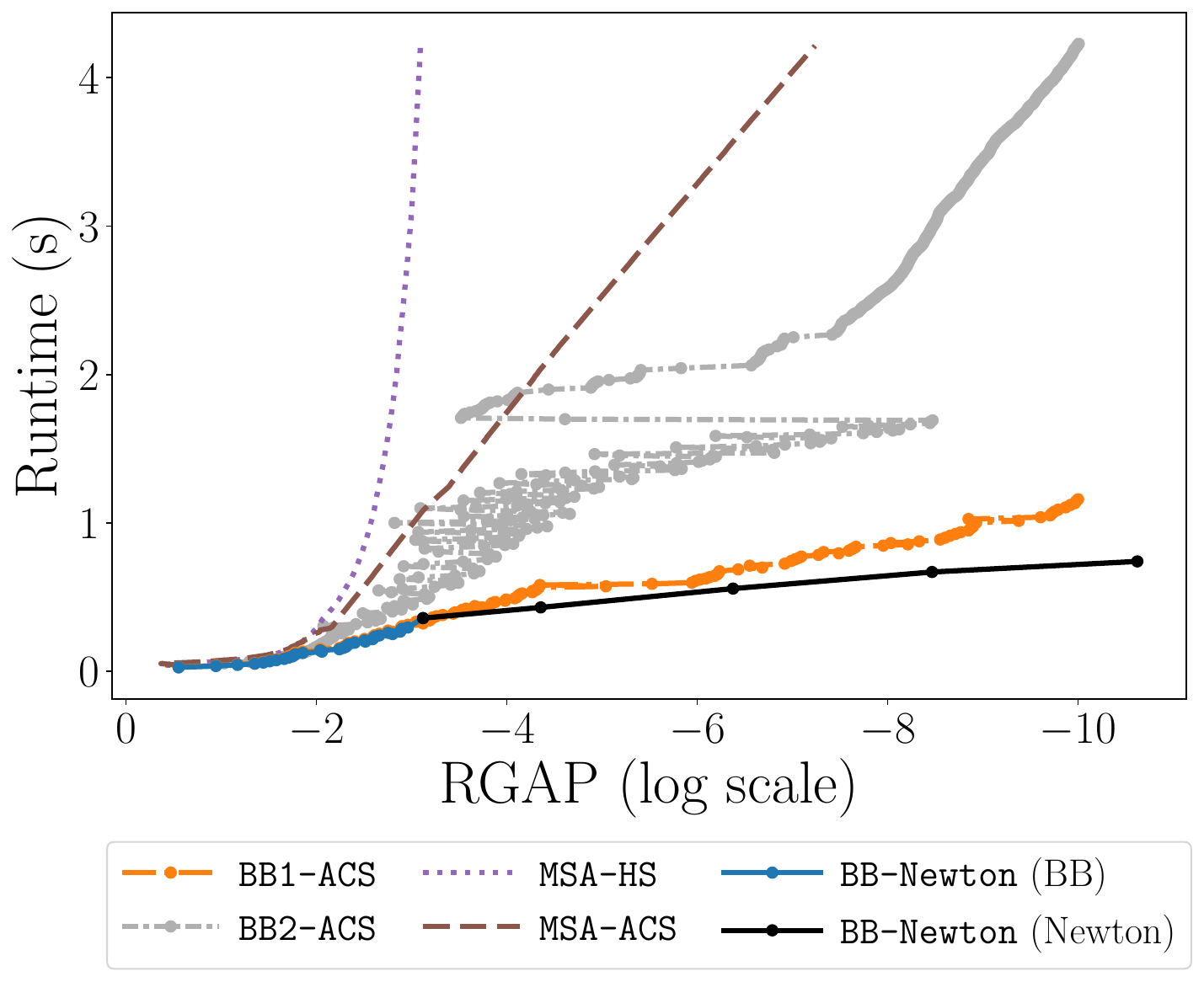}
        \caption{Winnipeg Asymmetric, 1x demand}
        \label{fig:runtime_grid_1}
    \end{subfigure}
    \hfill
    \begin{subfigure}[b]{0.45\textwidth}
        \centering
        \includegraphics[width=\textwidth]{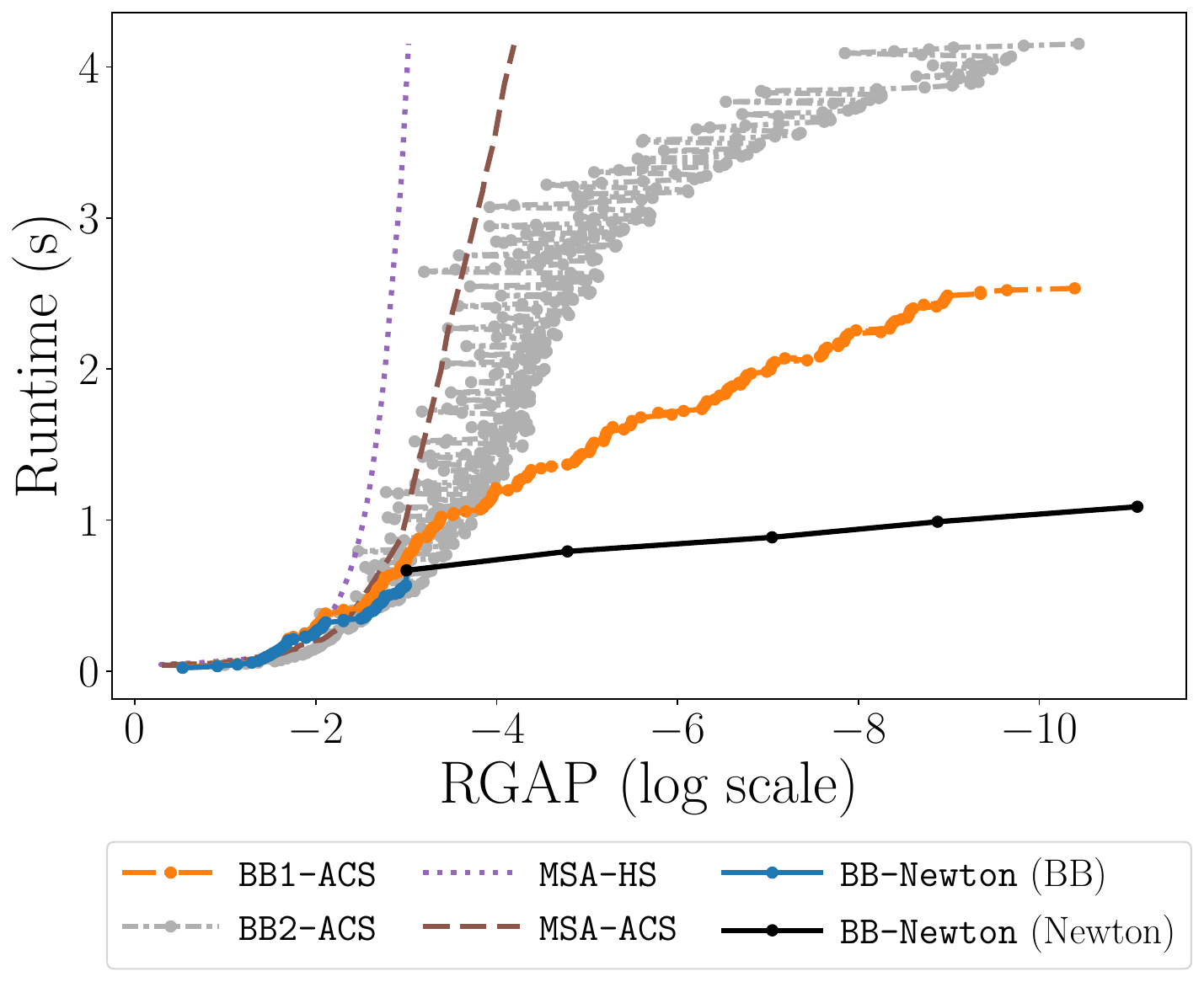}
        \caption{Winnipeg Asymmetric, 2x demand}
        \label{fig:runtime_grid_4}
    \end{subfigure}

    \medskip

    \begin{subfigure}[b]{0.45\textwidth}
        \centering
        \includegraphics[width=\textwidth]{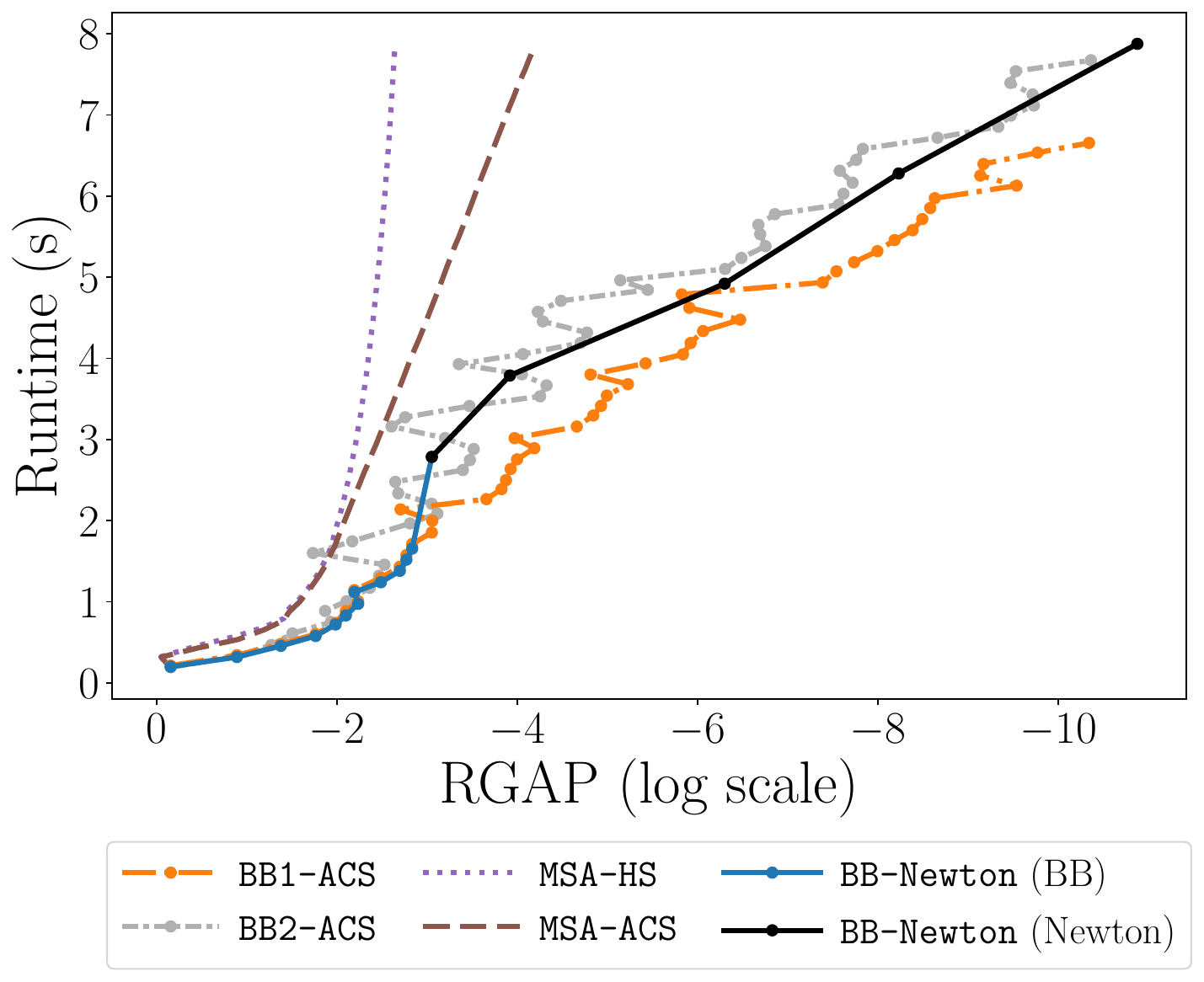}
        \caption{Chicago Sketch, 1x demand}
        \label{fig:runtime_grid_2}
    \end{subfigure}
    \hfill
    \begin{subfigure}[b]{0.45\textwidth}
        \centering
        \includegraphics[width=\textwidth]{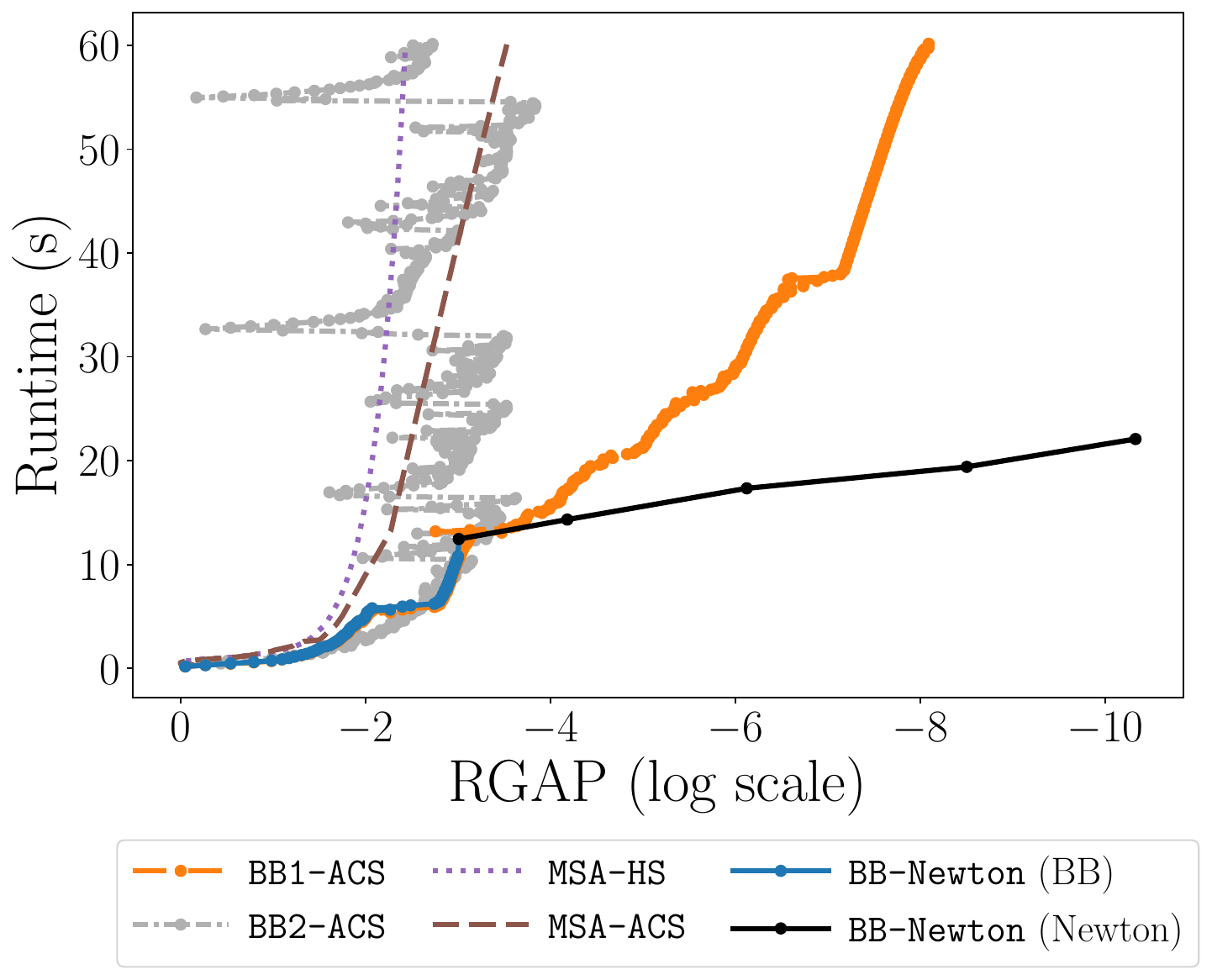}
        \caption{Chicago Sketch, 2x demand}
        \label{fig:runtime_grid_5}
    \end{subfigure}

    \medskip

    \begin{subfigure}[b]{0.45\textwidth}
        \centering
        \includegraphics[width=\textwidth]{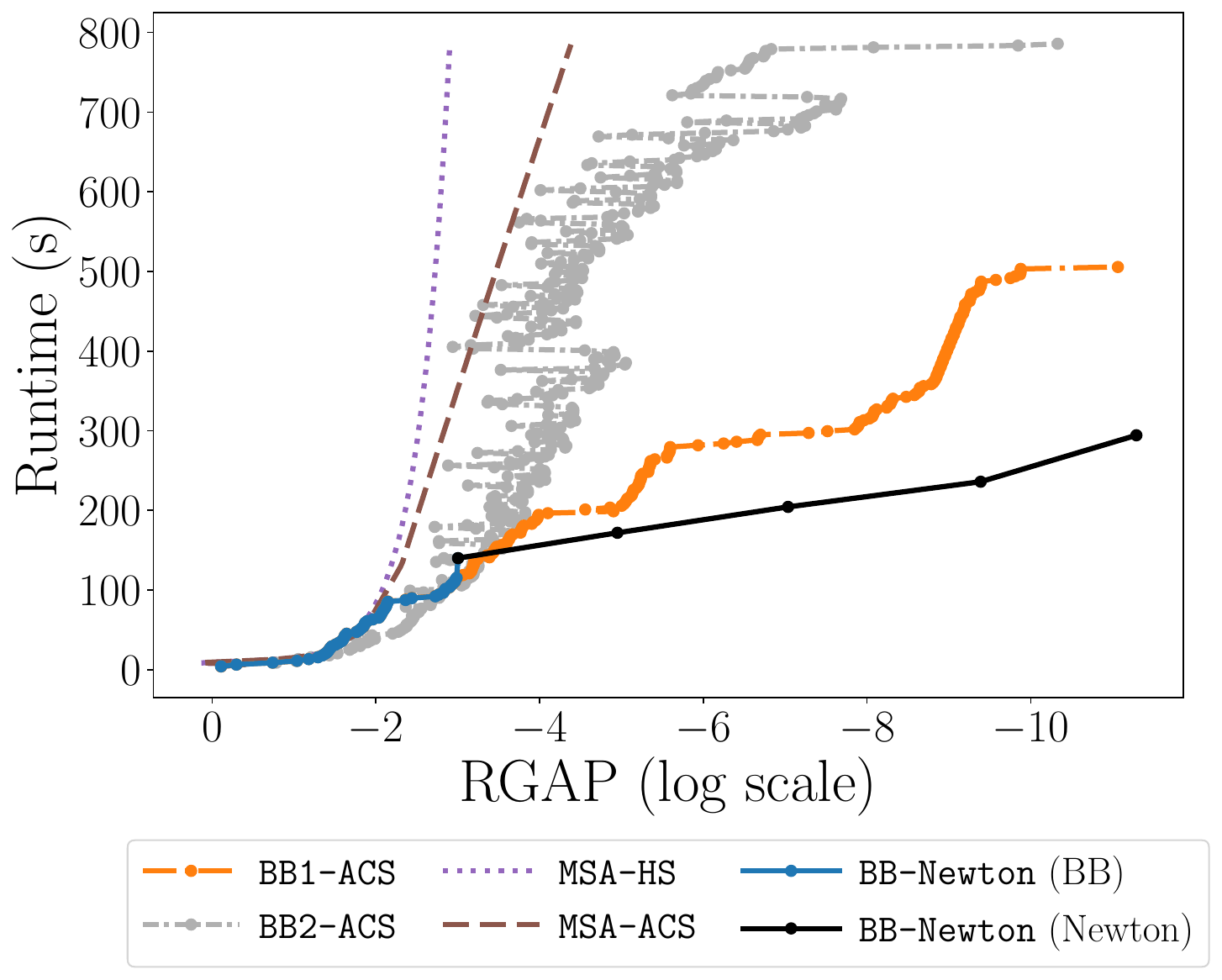}
        \caption{Chicago Regional, 1x demand}
        \label{fig:runtime_grid_3}
    \end{subfigure}
    \hfill
    \begin{subfigure}[b]{0.45\textwidth}
        \centering
        \includegraphics[width=\textwidth]{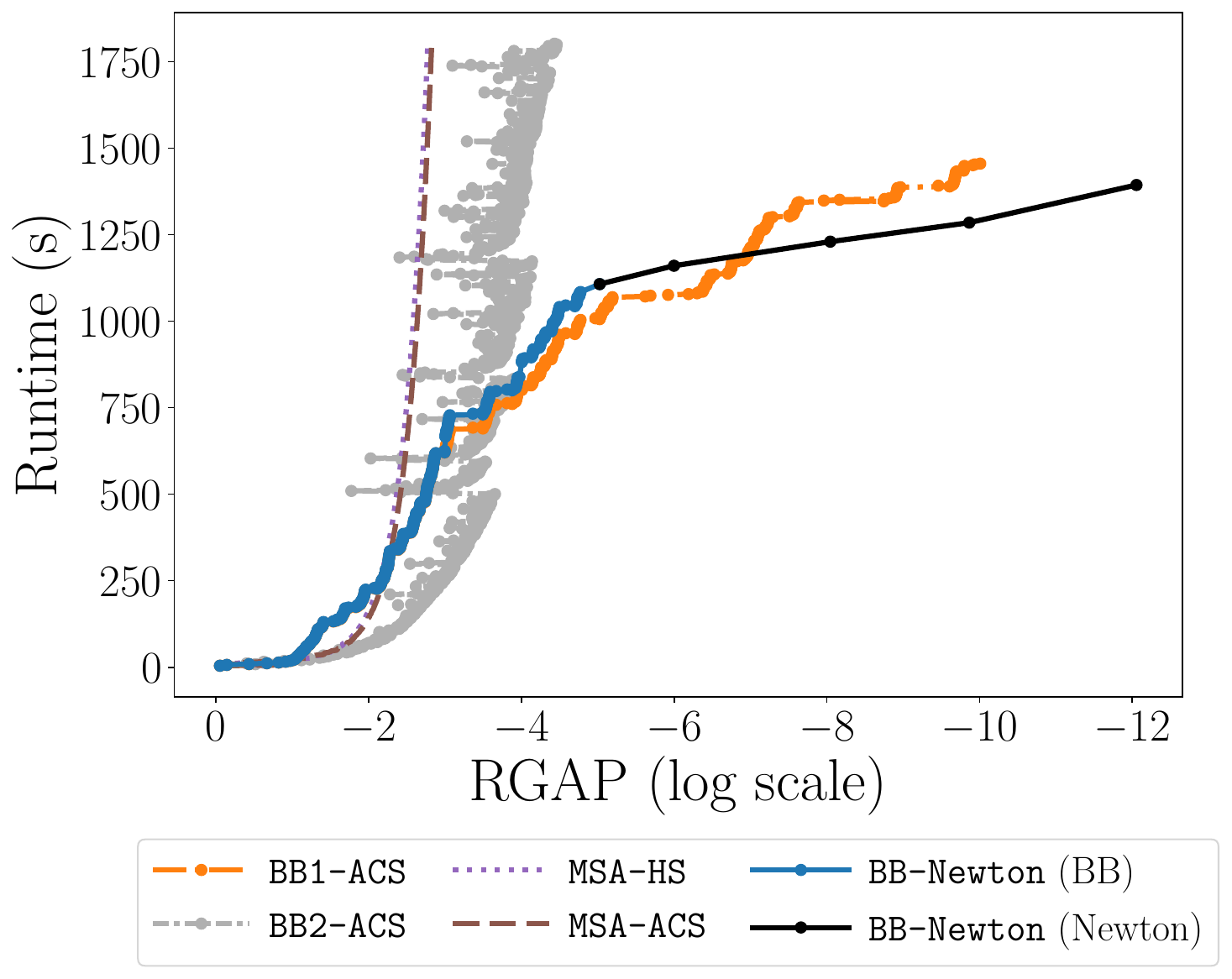}
        \caption{Chicago Regional, 2x demand}
        \label{fig:runtime_grid_6}
    \end{subfigure}
    \caption{Runtimes (in seconds) to reach a RGAP level (log scale) \texttt{BB-Newton, MSA-HS, MSA-ACS, BB1-ACS}, and \texttt{BB2-ACS} on Winnipeg Asymmetric, Chicago Sketch, and Chicago Regional networks at base (1$\times$) and doubled (2$\times$) demand levels. The $y$-axis is truncated at the last iteration of the non-MSA methods.}
    \label{fig:runtime_grid}
\end{figure}

For small-scale networks (Table \ref{tab:convergence_small}), \texttt{BB-Newton}, \texttt{BB1}, and \texttt{BB2} require similar run times of approximately 0.1 to 0.2 seconds under base demand. When the demand is doubled, the run times remain comparable, but \texttt{BB1} and \texttt{BB2} occasionally suffer from numerical issues and stall, as indicated by the asterisk marks for the Sioux Falls and BMC networks. The \texttt{BB1-ACS} and \texttt{BB2-ACS} rules resolve these numerical issues and provide similar computational performance. Even in these small networks, \texttt{MSA-HS} never reaches an RGAP of $10^{-10}$ within two seconds. In contrast, \texttt{MSA-ACS} performs remarkably well across all networks, providing run times similar to those of \texttt{BB-Newton} or the BB step-sizes.

For medium-scale networks (Table \ref{tab:convergence_medium}), a similar trend emerges. \texttt{BB-Newton} consistently converges within the one-minute budget and is either the fastest or competitive with other methods. Both \texttt{MSA-HS} and \texttt{MSA-ACS} struggle to reach the $10^{-10}$ RGAP threshold. The standard BB-based rules suffer from occasional numerical instability, especially when demand is high. The BB step-sizes with ACS fallbacks, \texttt{BB1-ACS} and \texttt{BB2-ACS}, converge reliably and match the performance of \texttt{BB-Newton} under base demand. When demand is doubled, however, both \texttt{BB1-ACS} and \texttt{BB2-ACS} fail to converge in most cases, leaving \texttt{BB-Newton} as the only rule that successfully converges within the one-minute budget.

Finally, we examine the large-scale networks in Table \ref{tab:convergence_large}. For the Philadelphia network, numerical issues occurs with \texttt{BB2} even under base demand. Using \texttt{BB2-ACS} resolves these issues and provides performance similar to \texttt{BB-Newton}. Both \texttt{BB1} and \texttt{BB1-ACS} converge in times slightly worse than \texttt{BB-Newton}. Under doubled demand, \texttt{BB-Newton}, \texttt{BB1}, and \texttt{BB1-ACS} converge within the 15-minute budget, while all other rules fail to reach the target gap. In the Chicago Regional network, \texttt{BB-Newton} performs significantly better than all other rules under base demand. When demand is doubled, no step-size rule converges within the 15-minute budget. With a 30-minute budget, only \texttt{BB-Newton} and \texttt{BB1-ACS} reach an RGAP of $10^{-10}$. Across the large networks, \texttt{BB-Newton} provides the fastest convergence to an RGAP of $10^{-10}$ in three out of four scenarios.

\begin{table}
\centering
\begin{tabular}{llrrrrrrr}
\toprule
\textbf{Network} & \textbf{Demand} & \texttt{BB-Newton} & \texttt{BB1} & \texttt{BB2} & \texttt{MSA-HS} & \texttt{MSA-ACS} & \texttt{BB1-ACS} & \texttt{BB2-ACS} \\\midrule
\multirow{4}{*}{Sioux Falls} & \multirow{2}{*}{1.0x} & 0.1 & 0.1 & 0.2 & -- & 0.6 & 0.4 & 0.2 \\
 &  & ($10^{-11}$) & ($10^{-11}$) & ($10^{-11}$) & ($10^{-3}$) & ($10^{-11}$) & ($10^{-11}$) & ($10^{-11}$) \\
\cmidrule(lr){2-9}
 & \multirow{2}{*}{2.0x} & 0.7 & \multirow{2}{*}{$\ast$} & \multirow{2}{*}{$\ast$} & -- & -- & 1.4 & 1.9 \\
 &  & ($10^{-11}$) &  &  & ($10^{-3}$) & ($10^{-6}$) & ($10^{-11}$) & ($10^{-11}$) \\
\midrule
\multirow{4}{*}{BMC} & \multirow{2}{*}{1.0x} & 0.1 & 0.2 & 0.1 & -- & 0.5 & 0.1 & 0.1 \\
 &  & ($10^{-14}$) & ($10^{-11}$) & ($10^{-11}$) & ($10^{-5}$) & ($10^{-11}$) & ($10^{-11}$) & ($10^{-11}$) \\
\cmidrule(lr){2-9}
 & \multirow{2}{*}{2.0x} & 0.4 & 1.0 & \multirow{2}{*}{$\ast$} & -- & -- & 0.9 & 0.9 \\
 &  & ($10^{-13}$) & ($10^{-11}$) &  & ($10^{-4}$) & ($10^{-7}$) & ($10^{-11}$) & ($10^{-11}$) \\
\midrule
\multirow{4}{*}{EMA} & \multirow{2}{*}{1.0x} & 0.0 & 0.0 & 0.1 & -- & 0.2 & 0.0 & 0.0 \\
 &  & ($10^{-11}$) & ($10^{-11}$) & ($10^{-11}$) & ($10^{-6}$) & ($10^{-11}$) & ($10^{-11}$) & ($10^{-11}$) \\
\cmidrule(lr){2-9}
 & \multirow{2}{*}{2.0x} & 0.1 & 0.1 & 0.1 & -- & 0.5 & 0.1 & 0.1 \\
 &  & ($10^{-13}$) & ($10^{-11}$) & ($10^{-11}$) & ($10^{-5}$) & ($10^{-11}$) & ($10^{-11}$) & ($10^{-11}$) \\
\midrule
\multirow{4}{*}{Anaheim} & \multirow{2}{*}{1.0x} & 0.1 & 0.1 & 0.1 & -- & 0.5 & 0.1 & 0.1 \\
 &  & ($10^{-11}$) & ($10^{-11}$) & ($10^{-11}$) & ($10^{-5}$) & ($10^{-11}$) & ($10^{-11}$) & ($10^{-11}$) \\
\cmidrule(lr){2-9}
 & \multirow{2}{*}{2.0x} & 0.2 & 0.2 & 0.2 & -- & 0.6 & 0.2 & 0.2 \\
 &  & ($10^{-11}$) & ($10^{-11}$) & ($10^{-11}$) & ($10^{-5}$) & ($10^{-11}$) & ($10^{-11}$) & ($10^{-11}$) \\
\bottomrule
\end{tabular}
\caption{Small networks (time limit 2 seconds): CPU time (in seconds) required to reach a relative gap of $10^{-10}$ under different step-size rules. For each scenario, the upper row reports the time and the lower row, in parentheses, reports the final gap reached at termination. A dash (--) in the time row indicates that the $10^{-10}$ gap was not reached within the time budget. An asterisk ($\ast$) indicates that a numerical error in step-size calculation caused the run to stop. Demand values of 1x and 2x correspond to the base and doubled OD demand, respectively.}
\label{tab:convergence_small}
\end{table}

\begin{table}
\centering
\begin{tabular}{llrrrrrrr}
\toprule
\textbf{Network} & \textbf{Demand}  & \texttt{BB-Newton} & \texttt{BB1} & \texttt{BB2} & \texttt{MSA-HS} & \texttt{MSA-ACS} & \texttt{BB1-ACS} & \texttt{BB2-ACS} \\\midrule
\multirow{4}{*}{Chicago Sketch} & \multirow{2}{*}{1.0x} & 7.9 & 6.6 & 7.6 & -- & 23.7 & 6.7 & 7.7 \\
 &  & ($10^{-11}$) & ($10^{-11}$) & ($10^{-11}$) & ($10^{-4}$) & ($10^{-11}$) & ($10^{-11}$) & ($10^{-11}$) \\
\cmidrule(lr){2-9}
 & \multirow{2}{*}{2.0x} & 22.1 & \multirow{2}{*}{$\ast$} & \multirow{2}{*}{$\ast$} & -- & -- & -- & -- \\
 &  & ($10^{-11}$) &  &  & ($10^{-3}$) & ($10^{-4}$) & ($10^{-9}$) & ($10^{-3}$) \\
\midrule
\multirow{4}{*}{Berlin Center} & \multirow{2}{*}{1.0x} & 9.4 & 10.1 & 9.2 & -- & -- & 10.3 & 9.2 \\
 &  & ($10^{-14}$) & ($10^{-11}$) & ($10^{-11}$) & ($10^{-6}$) & ($10^{-10}$) & ($10^{-11}$) & ($10^{-11}$) \\
\cmidrule(lr){2-9}
 & \multirow{2}{*}{2.0x} & 57.7 & -- & -- & -- & -- & -- & -- \\
 &  & ($10^{-12}$) & ($10^{-5}$) & ($10^{-7}$) & ($10^{-5}$) & ($10^{-6}$) & ($10^{-7}$) & ($10^{-7}$) \\
\midrule
\multirow{4}{*}{\makecell{Winnipeg \\ Asymmetric}} & \multirow{2}{*}{1.0x} & 0.7 & 1.1 & 3.9 & -- & 6.2 & 1.2 & 4.2 \\
 &  & ($10^{-11}$) & ($10^{-11}$) & ($10^{-11}$) & ($10^{-5}$) & ($10^{-11}$) & ($10^{-11}$) & ($10^{-11}$) \\
\cmidrule(lr){2-9}
 & \multirow{2}{*}{2.0x} & 1.1 & 2.4 & 3.8 & -- & 18.0 & 2.5 & 4.2 \\
 &  & ($10^{-12}$) & ($10^{-11}$) & ($10^{-11}$) & ($10^{-4}$) & ($10^{-11}$) & ($10^{-11}$) & ($10^{-11}$) \\
 \bottomrule
\end{tabular}
\caption{Medium networks (time limit 1 minute): CPU time (in seconds) required to reach a relative gap of $10^{-10}$ under different step-size rules. For each scenario, the upper row reports the time and the lower row, in parentheses, reports the final gap reached at termination. A dash (--) in the time row indicates that the $10^{-10}$ gap was not reached within the time budget. An asterisk ($\ast$) indicates that a numerical error in step-size calculation caused the run to stop. Demand values of 1x and 2x correspond to the base and doubled OD demand, respectively.}
\label{tab:convergence_medium}
\end{table}

\begin{table}
\centering
\begin{tabular}{llrrrrrrr}
\toprule
\textbf{Network} & \textbf{Demand} & \texttt{BB-Newton} & \texttt{BB1} & \texttt{BB2} & \texttt{MSA-HS} & \texttt{MSA-ACS} & \texttt{BB1-ACS} & \texttt{BB2-ACS} \\\midrule
\multirow{4}{*}{Philadelphia} & \multirow{2}{*}{1.0x} & 95.6 & 109.2 & \multirow{2}{*}{$\ast$} & -- & 775.7 & 105.5 & 126.2 \\
 &  & ($10^{-12}$) & ($10^{-11}$) &  & ($10^{-4}$) & ($10^{-11}$) & ($10^{-11}$) & ($10^{-11}$) \\
\cmidrule(lr){2-9}
 & \multirow{2}{*}{2.0x} & 500.1 & 481.7 & -- & -- & -- & 488.3 & -- \\
 &  & ($10^{-12}$) & ($10^{-11}$) & ($10^{-4}$) & ($10^{-3}$) & ($10^{-3}$) & ($10^{-11}$) & ($10^{-4}$) \\
\midrule
\multirow{4}{*}{Chicago Regional} & \multirow{2}{*}{1.0x} & 294.3 & \multirow{2}{*}{$\ast$} & 782.5 & -- & -- & 505.5 & 785.8 \\
 &  & ($10^{-12}$) &  & ($10^{-11}$) & ($10^{-3}$) & ($10^{-5}$) & ($10^{-12}$) & ($10^{-11}$) \\
\cmidrule(lr){2-9}
 & \multirow{2}{*}{2.0x} & 1393.5 & \multirow{2}{*}{$\ast$} & -- & -- & -- & 1455.2 & -- \\
 &  & ($10^{-13}$) &  & ($10^{-5}$) & ($10^{-3}$) & ($10^{-3}$) & ($10^{-11}$) & ($10^{-5}$) \\
\bottomrule
\end{tabular}
\caption{Large networks (time limit 15 minute): CPU time (in seconds) required to reach a relative gap of $10^{-10}$ under different step-size rules. For each scenario, the upper row reports the time and the lower row, in parentheses, reports the final gap reached at termination. A dash (--) in the time row indicates that the $10^{-10}$ gap was not reached within the time budget. An asterisk ($\ast$) indicates that a numerical error in step-size calculation caused the run to stop. Demand values of 1x and 2x correspond to the base and doubled OD demand, respectively. (No step-size rule reached $10^{-10}$ in 15 minutes in Chicago Regional network with 2x demand and hence a maximum of 30 minutes was provided.)}
\label{tab:convergence_large}
\end{table}

In summary, the following trends are observed regarding the computational performance of the seven tested methods:
\begin{samepage}
\begin{itemize}
    \item \texttt{BB-Newton} is the only step-size rule that consistently converged in every tested scenario within the respective computational budget. In most cases, it was either competitive or the fastest-performing rule.
    \item In small networks, all step-sizes except \texttt{MSA-HS} perform equally well. Notably, \texttt{MSA-ACS} performs exceptionally well, on par with the other rules. This is particularly interesting given that \texttt{MSA-ACS} utilizes only first-order information.
    \item In many cases, \texttt{BB1} and \texttt{BB2} suffer from numerical issues, especially when demand is doubled. However, when paired with the ACS fallback, the BB-based step-sizes often demonstrate run times similar to \texttt{BB-Newton} under base demand.
   \item In most large networks or under high demand, \texttt{BB-Newton} provides significantly faster convergence than other rules. 
\end{itemize}
\end{samepage}

\section{Conclusion}\label{sec:conc}
In this paper, we performed a spectral analysis of the Jacobian of the target function in logit-based SUE.  The main findings from this analysis were that MSA converges linearly with a rate $1 - s$ for small step-sizes $s$, in a neighborhood of equilibrium, and a bound on the most negative eigenvalue of $\mathcal{K}(\mb{h})$, which controls the convergence rate of MSA.  Motivated by these findings, we proposed a practical step-size selection rule (Algorithm~\ref{alg:4_stepsize}) that achieves asymptotic linear convergence. The rule can be used as a drop-in replacement in any MSA-based solver and performs significantly better than the standard MSA with step-sizes given by the harmonic series $\{1/k : k \geq 1\}$.

We further extended the spectral analysis of the Jacobian of the logit mapping to develop a quadratically convergent step-size for logit-based SUE. In particular, we show that using the Jacobian of the logit mapping to define step-sizes keeps the computations tractable, as we only deal with matrix vector products instead of building and inverting a full matrix. We show using Krylov subspace methods, such as GMRES with an appropriate tolerance sequence, can provide fast, superlinear convergence. We further showed that our step-sizes always map feasible path flow vectors to feasible vectors near equilibrium, obviating the need for manifold projection.

We benchmarked our results against the BB step-sizes from \cite{du2021faster}, and showed that when we start with BB step-sizes and then switch to the Newton step-size (\texttt{BB-Newton}), performance is similar in computational runtime when demand is low and networks are of moderate size; and faster  when demand is high or networks are of large scale. We test seven different methods across networks of different sizes under different demand levels. In our experiments, \texttt{BB-Newton} was the only step-size rule that converged in all tested experiments in the given time budget.

Several directions for future research follow naturally from this work. First, we restricted ourselves to acyclic path sets in our analysis. However, most of our analysis carries through for any finite path set. Some bounds would need to be adjusted, since the link-path incidence matrix is no longer binary when cyclic paths are allowed, but equivalent versions of our theorems should hold. Furthermore, the analysis of the constant step-size in MSA could be extended to a link-based formulation. Some initial experiments with link-based MSA show promising results, but this was not tested in greater detail and poses an excellent direction for future research. The constant step-size rule in MSA was applied here purely based on gap reduction. Other ``reset conditions'', such as ones that adapt to local curvature, could improve MSA performance further.

Second, broadening the analysis to non-logit SUE would be valuable. Much of our analysis depended on the Jacobian of the logit mapping, and determining the extent to which the conclusions apply to other distributions of the error term would be very valuable.
Third, one can investigate whether the Newton method can be globalized. We paired Newton with a globally convergent BB-based rule, but a direct globalization of the Newton step itself could improve the rate of convergence. This could include Newton's method variants for constrained optimization, where instead of taking the full Newton step, a partial step that ensures sufficient decrease in the objective function is used. However, such a step does not preserve demand feasibility, making this direction non-trivial. Finally, the Jacobian properties developed in this paper can be used beyond algorithm design. One direct application is deriving an error bound for any feasible path flow relative to the SUE solution. \cite{bagchi2025error} provides initial ideas in this direction with an OD-level approximation. Using the full Jacobian described here, their approach can be extended to obtain an exact error bound. 

\section*{Acknowledgments}
This work was partially funded through University Transportation Center National Center for Understanding Future Travel Behavior and Demand.

\section*{Declaration of generative AI and AI-assisted technologies in the writing process}
During the preparation of this work, the authors used Claude-Opus 4.7 in order to improve writing in select parts of the paper. After using this tool/service, the authors reviewed and edited the content as needed and take full responsibility for the content of the published article.

\section*{Author Contributions}

The authors confirm their contribution to the paper as follows:\\
Debojjal Bagchi: Conceptualization, Methodology, Formal Analysis, Validation, Writing (original draft), Writing (review \& editing).\\
Stephen D. Boyles: Methodology, Validation, Writing (review \& editing), Supervision.\\
All authors reviewed the results and approved the final version of the manuscript.

\bibliography{refs} 

\appendix

\section{Worked example on the Braess network}\label{app:braess}

To provide intuition for the results in Section~\ref{sec:3_Jacobian} and Section~\ref{sec:newton}, we work through a numerical example on the Braess network shown in Figure~\ref{fig:braess_network}. The network has one OD pair $(\texttt{O}, \texttt{D})$ with demand $d_{\texttt{OD}} = 6$ and three paths: $\texttt{O} \to \texttt{A} \to \texttt{D}$, $\texttt{O} \to \texttt{B} \to \texttt{D}$, and $\texttt{O} \to \texttt{A} \to \texttt{B} \to \texttt{D}$, with path flows $h_1$, $h_2$, and $h_3$, respectively. We set the dispersion parameter $\theta = 1.0$. The path costs are
\begin{equation*}
    c_1(\mb{h}) = (h_1 + h_3) + 5,\quad c_2(\mb{h}) = 5 + (h_2 + h_3),\quad c_3(\mb{h}) = (h_1 + h_3) + (h_2 + h_3).
\end{equation*}
\begin{figure}[H]
    \centering
    \begin{tikzpicture}[
        node distance=2.5cm,
        every node/.style={circle, draw, minimum size=0.8cm, inner sep=0pt},
        every edge/.style={draw, thick, ->, >=stealth}
    ]
        \node (O) at (0, 0) {$\texttt{O}$};
        \node (A) at (3, 1.5) {$\texttt{A}$};
        \node (B) at (3, -1.5) {$\texttt{B}$};
        \node (D) at (6, 0) {$\texttt{D}$};
        \draw[->, >=stealth, thick] (O) -- (A) node[midway, above left, draw=none] {$\tau(a_{\texttt{OA}}) = a_{\texttt{OA}}$};
        \draw[->, >=stealth, thick] (O) -- (B) node[midway, below left, draw=none] {$\tau(a_{\texttt{OB}}) = 5$};
        \draw[->, >=stealth, thick] (A) -- (D) node[midway, above right, draw=none] {$\tau(a_{\texttt{AD}}) = 5$};
        \draw[->, >=stealth, thick] (B) -- (D) node[midway, below right, draw=none] {$\tau(a_{\texttt{BD}}) = a_{\texttt{BD}}$};
        \draw[->, >=stealth, thick] (A) -- (B) node[midway, right, draw=none] {$\,\tau(a_{\texttt{AB}}) = 0$};
    \end{tikzpicture}
    \caption{Braess network with one OD pair $(\texttt{O}, \texttt{D})$, three paths, and link cost functions shown on each link. Link $l$ connecting nodes $\texttt{A}$ and $\texttt{B}$ is denoted as $l_{\texttt{AB}}$.}
    \label{fig:braess_network}
\end{figure}

\subsection{Computations of the Jacobian}\label{app:braess_jacob}

The Jacobian of path costs with respect to path flows, as defined in Equation~\eqref{eq:3_codtderdef}, is
\begin{equation*}
    \mb{J} = \mathcal{C}'(\mb{h}) = \begin{pmatrix} 1 & 0 & 1 \\ 0 & 1 & 1 \\ 1 & 1 & 2 \end{pmatrix}.
\end{equation*}
We will use $\mb{J} := \mathcal{C}'(\mb{h})$ as a shorthand throughout this example, consistent with Section~\ref{subsec:3_evals}.

At iterate $\mb{h}^k = (2, 2, 2)^T$, the path costs are $\mb{c} = \mathcal{C}(\mb{h}^k) = (9, 9, 8)^T$. The logit path-choice probabilities are proportional to $(e^{-9}, e^{-9}, e^{-8})$, or equivalently $(1, 1, e)$, giving
\begin{equation*}
    \mb{p} = \mathcal{P}(\mb{c}) = \frac{1}{2 + e}(1,\ 1,\ e)^T \approx (0.21,\ 0.21,\ 0.58)^T.
\end{equation*}

The matrix $\mathcal{P}'(\mb{c})$ as shown in Equation~\eqref{eq:pprime} is
\begin{equation*}
    \mathcal{P}'(\mb{c}) \approx \begin{pmatrix} -\theta p_1(1-p_1) & \theta p_1 p_2 & \theta p_1 p_3 \\ \theta p_2 p_1 & -\theta p_2(1-p_2) & \theta p_2 p_3 \\ \theta p_3 p_1 & \theta p_3 p_2 & -\theta p_3(1-p_3) \end{pmatrix} \approx \begin{pmatrix} -0.17 & 0.04 & 0.12 \\ 0.04 & -0.17 & 0.12 \\ 0.12 & 0.12 & -0.24 \end{pmatrix}.
\end{equation*}

The matrix $\mathcal{H}'_{\mb{h}}(\mb{p}^k)$ has rows of constant value $p_i$:
\begin{equation*}
    \mathcal{H}'_{\mb{h}}(\mb{p}^k) \approx \begin{pmatrix} 0.21 & 0.21 & 0.21 \\ 0.21 & 0.21 & 0.21 \\ 0.58 & 0.58 & 0.58 \end{pmatrix}.
\end{equation*}

The matrix $\mathcal{H}'_{\mb{p}}(\mb{h}^k)$ is diagonal with entries equal to the OD demand $d_{\texttt{OD}} = 6$:
\begin{equation*}
    \mathcal{H}'_{\mb{p}}(\mb{h}^k)  = \begin{pmatrix} 6 & 0 & 0 \\ 0 & 6 & 0 \\ 0 & 0 & 6 \end{pmatrix}.
\end{equation*}

Following the decomposition in Equation~\eqref{eq:JacobianFull}, we compute each piece of $\mathcal{K}(\mb{h}^k) = \mathcal{H}'_{\mb{p}}(\mb{h}^k)\,\mathcal{P}'(\mb{c})\,\mathcal{C}'(\mb{h})$.
\begin{equation*}
    \mathcal{K}(\mb{h}^k) \approx \begin{pmatrix} 6 & 0 & 0 \\ 0 & 6 & 0 \\ 0 & 0 & 6 \end{pmatrix} \begin{pmatrix} -0.17 & 0.04 & 0.12 \\ 0.04 & -0.17 & 0.12 \\ 0.12 & 0.12 & -0.24 \end{pmatrix} \begin{pmatrix} 1 & 0 & 1 \\ 0 & 1 & 1 \\ 1 & 1 & 2 \end{pmatrix} \approx \begin{pmatrix} -0.27 & 1.00 & 0.73 \\ 1.00 & -0.27 & 0.73 \\ -0.73 & -0.73 & -1.47 \end{pmatrix}.
\end{equation*}

The matrix $\mb{S} := -\mathcal{H}'_{\mb{p}}(\mb{h}^k)\,\mathcal{P}'(\mb{c})$ defined in Section~\ref{subsec:3_evals} evaluates to
\begin{equation*}
    \mb{S} = d_{\texttt{OD}}\,\theta \bigl(\diag(\mb{p}) - \mb{p}\mb{p}^T\bigr) \approx \begin{pmatrix} 1.00 & -0.27 & -0.73 \\ -0.27 & 1.00 & -0.73 \\ -0.73 & -0.73 & 1.46 \end{pmatrix}.
\end{equation*}
We note that $\mb{1}^T \mb{S} \approx (0,\ 0,\ 0)$, illustrating Lemma~\ref{lem:onesS}. The reduced Jacobian is then
\begin{equation*}
    \mathcal{K}(\mb{h}^k) = -\mb{S}\,\mb{J} \approx -\begin{pmatrix} 1.00 & -0.27 & -0.73 \\ -0.27 & 1.00 & -0.73 \\ -0.73 & -0.73 & 1.46 \end{pmatrix} \begin{pmatrix} 1 & 0 & 1 \\ 0 & 1 & 1 \\ 1 & 1 & 2 \end{pmatrix} \approx \begin{pmatrix} -0.27 & 1.00 & 0.73 \\ 1.00 & -0.27 & 0.73 \\ -0.73 & -0.73 & -1.47 \end{pmatrix}.
\end{equation*}
We note that the decomposition of $\mathcal{K}(\mb{h})$ as $-\mb{S}\,\mb{J}$ allows it to be expressed as a product of two symmetric matrices $\mb{S}$ and $\mb{J}$, although $\mathcal{K}(\mb{h})$ need not necessarily be symmetric. Furthermore, the eigenvalues of $\mathcal{K}(\mb{h}^k)$ are approximately $\{0,\ -1.27,\ -0.73\}$, all real and non-positive, with zero as the maximum eigenvalue, illustrating Theorem~\ref{thm:spectrum}.

\subsection{The Newton step}\label{app:braess_newton}

The full Newton step requires
\begin{equation*}
    \mathcal{F}'(\mb{h}^k) = \mathcal{H}'_{\mb{h}}(\mb{p}^k) + \mathcal{K}(\mb{h}^k) - \mb{I} \approx \begin{pmatrix} -1.06 & 1.21 & 0.94 \\ 1.21 & -1.06 & 0.94 \\ -0.16 & -0.16 & -1.89 \end{pmatrix}.
\end{equation*}
The column sums are $(-0.01,\ -0.01,\ -0.01) \approx (0,\ 0,\ 0)$ (up to rounding), confirming $\mb{1}^T \mathcal{F}'(\mb{h}^k) = \mb{0}^T$ and hence the singularity of $\mathcal{F}'(\mb{h}^k)$.

Solving $\mathcal{F}'(\mb{h}^k)\,\mb{v} = \mb{0}$, subtracting row 1 from row 2 gives $2.27\,(v_1 - v_2) = 0$, so $v_1 = v_2$. Substituting into row 3 yields $-0.32\,v_1 - 1.89\,v_3 = 0$, so $v_3 \approx -0.17\,v_1$. The null space of $\mathcal{F}'(\mb{h}^k)$ is therefore the line
\begin{equation*}
    \bigl\{\, t\,(1,\ 1,\ -0.17)^T : t \in \mathbb{R} \,\bigr\}.
\end{equation*}

Inverting $\mb{I} - \mathcal{K}(\mb{h}^k)$ gives
\begin{equation*}
    (\mb{I} - \mathcal{K}(\mb{h}^k))^{-1} \approx \begin{pmatrix} 0.93 & 0.49 & 0.42 \\ 0.49 & 0.93 & 0.42 \\ -0.42 & -0.42 & 0.15 \end{pmatrix},
\end{equation*}
and the fixed point residual is
\begin{equation*}
\mathcal{F}(\mb{h}^k) = \mathcal{L}(\mb{h}^k) - \mb{h}^k = d_{\texttt{OD}}\,\mb{p} - \mb{h}^k \approx (-0.73,\ -0.73,\ 1.46)^T.
\end{equation*}
The reduced Newton step from Equation~\eqref{eq:4_reduced} is
\begin{equation*}
    \boldsymbol{\delta}^k = (\mb{I} - \mathcal{K}(\mb{h}^k))^{-1}\,\mathcal{F}(\mb{h}^k) \approx (-0.42,\ -0.42,\ 0.84)^T.
\end{equation*}
We observe that $\sum_i \delta_i^k \approx 0$, so $\boldsymbol{\delta}^k$ preserves demand, as expected from Theorem~\ref{thm:4_demand_unique}. Substituting $\boldsymbol{\delta}^k$ into the full Newton system we observe
\begin{equation*}
\mathcal{F}'(\mb{h}^k)\,\boldsymbol{\delta}^k \approx (0.73,\ 0.73,\ -1.46)^T = -\mathcal{F}(\mb{h}^k).
\end{equation*}
So, $\boldsymbol{\delta}^k$ also solves the full Newton system, illustrating Theorem~\ref{thm:4_reduced_solves_full}. The full solution set is the line
\begin{equation*}
    \boldsymbol{\hat{\delta}}^k(t) = \boldsymbol{\delta}^k + t\,(1,\ 1,\ -0.17)^T \approx (-0.42 + t,\ -0.42 + t,\ 0.84 - 0.17\,t)^T,
\end{equation*}
with OD sum
\begin{equation*}
    \sum_i \hat{\delta}_i^k(t) \approx 1.83\,t.
\end{equation*}
Demand preservation requires $\sum_i \hat{\delta}_i^k(t) = 0$, which forces $t = 0$. The only demand-preserving solution is $\boldsymbol{\delta}^k$, illustrating Theorem~\ref{thm:4_demand_unique}.

To provide further intuition for Theorem~\ref{thm:4_demand_unique}, we show that $\sum_i v_i \neq 0$ for nonzero null-space directions is not a coincidence. We recall that $\mb{v}$ lies in the null space of $\mathcal{F}'(\mb{h}^k) = \mathcal{H}'_{\mb{h}}(\mb{p}^k) + \mathcal{K}(\mb{h}^k) - \mb{I}$, that is,
\begin{equation*}
    \mathcal{H}'_{\mb{h}}(\mb{p}^k)\,\mb{v} + (\mathcal{K}(\mb{h}^k) - \mb{I})\,\mb{v} = \mb{0}.
\end{equation*}
We are interested in nonzero null-space directions $\mb{v} \neq \mb{0}$ (equivalently, $t \neq 0$), since these correspond to solutions of the full Newton system distinct from $\boldsymbol{\delta}^k$. Since $\mb{I} - \mathcal{K}(\mb{h}^k)$ is invertible, its null space contains only $\mb{0}$. So $\mb{v}$ lies in the null space of $\mathcal{F}'(\mb{h}^k)$ but \emph{not} in the null space of $\mb{I} - \mathcal{K}(\mb{h}^k)$. The contribution from $\mathcal{H}'_{\mb{h}}(\mb{p}^k)\,\mb{v}$ must therefore be nonzero to balance $(\mathcal{K}(\mb{h}^k) - \mb{I})\,\mb{v}$.

Computing $\mathcal{H}'_{\mb{h}}(\mb{p}^k)\,\mb{v}$ component-wise:
\begin{equation*}
    \mathcal{H}'_{\mb{h}}(\mb{p}^k)\,\mb{v} \approx \begin{pmatrix} 0.21 & 0.21 & 0.21 \\ 0.21 & 0.21 & 0.21 \\ 0.58 & 0.58 & 0.58 \end{pmatrix}\begin{pmatrix} v_1 \\ v_2 \\ v_3 \end{pmatrix} = (v_1 + v_2 + v_3)\begin{pmatrix} 0.21 \\ 0.21 \\ 0.58 \end{pmatrix},
\end{equation*}
which is nonzero if and only if $v_1 + v_2 + v_3 \neq 0$. This will always be the case because each row of $\mathcal{H}'_{\mb{h}}(\mb{p}^k)$ is same and strictly positive. Hence a nonzero null-space direction $\mb{v}$ of $\mathcal{F}'(\mb{h}^k)$ \emph{cannot} satisfy $v_1 + v_2 + v_3 = 0$. This is exactly what we observed: $v_1 + v_2 + v_3 \approx 1.83\,t \neq 0$ for $t \neq 0$. 

To complete the example, we take the Newton step and verify the size of the residual after one update. The new iterate is
\begin{equation*}
    \mb{h}^{k+1} = \mb{h}^k + \boldsymbol{\delta}^k = (2,\ 2,\ 2)^T + (-0.42,\ -0.42,\ 0.84)^T \approx (1.58,\ 1.58,\ 2.84)^T,
\end{equation*}
which sums to $6 = d_{\texttt{OD}}$, confirming demand preservation. Recomputing path costs and logit probabilities at $\mb{h}^{k+1}$ gives $\mb{c}(\mb{h}^{k+1}) \approx (9.42,\ 9.42,\ 8.84)^T$ and $\mb{p}^{k+1} \approx (0.264,\ 0.264,\ 0.472)^T$, so
\begin{equation*}
    \mathcal{F}(\mb{h}^{k+1}) = d_{\texttt{OD}}\,\mb{p}^{k+1} - \mb{h}^{k+1} \approx (0.005,\ 0.005,\ -0.010)^T.
\end{equation*}
This gives $\left\|\mathcal{F}(\mb{h}^{k+1})\right\| \approx 0.01$ from $\left\|\mathcal{F}(\mb{h}^k)\right\| \approx 1.79$ in one step, illustrating the rapid local convergence of the Newton step established in Section~\ref{subsec:4_convergence}.
\end{document}

%% file: include/header.tex
\usepackage{algorithm}
\usepackage{amsmath}
\usepackage{amsthm}
\usepackage{amssymb}
\usepackage{caption}
\usepackage{array}
\usepackage{bm}
\usepackage{enumerate}
\usepackage{fancyhdr}
\usepackage[pdftex]{graphicx}
\usepackage{multirow}
\usepackage{nag}
\usepackage{natbib}
\usepackage{verbatim}
\usepackage[colorlinks]{hyperref}
\usepackage{url}
\usepackage[dvipsnames]{xcolor}
\usepackage[normalem]{ulem}
\usepackage{breqn}
\usepackage{enumitem}
\usepackage{makecell}

\hypersetup{bookmarks=true, citecolor=blue, urlcolor=blue, linkcolor=blue}

\usepackage{titlecaps}
\Addlcwords{an and the}
\usepackage[utf8]{inputenc}
\usepackage{amsmath}
\usepackage{amsfonts}
\usepackage{amssymb}
\usepackage{algorithm}
\usepackage[noend]{algpseudocode}
\usepackage{multirow}
\usepackage{float}
\usepackage{array}
\usepackage{lipsum}
\usepackage[left=0.8in,right=0.8in,top=0.8in,bottom=0.8in]{geometry}
\usepackage{comment}
\usepackage{graphicx}
\usepackage{url}
\usepackage{color}
\usepackage{pgfplots}
\usepackage{mathtools}
\usepackage{caption}
\usepackage{subcaption}
\usepackage{pdflscape}
\usepackage{booktabs} 
\usepackage{enumerate}
\usepackage{authblk}
\usepackage{blkarray}
\usepackage{bm}
\usepackage{marvosym} 
\usepackage{soul}
\usepackage{textcomp, xspace}
\usepackage{lineno}
\usepackage{amssymb}  
\usepackage{dsfont} 
\usepgfplotslibrary{dateplot}
\usetikzlibrary{patterns}

\newtheorem{dfn}{Definition}

\newtheorem{lem}{Lemma}
\newtheorem{prp}{Proposition}
\newtheorem{thm}{Theorem}

\definecolor{greenxllite}{RGB}{196,215,155}
\definecolor{bluexllite}{RGB}{149,179,215}
\definecolor{redxllite}{RGB}{218,150,148}
\definecolor{greenxl}{RGB}{155, 187, 89}
\definecolor{bluexl}{RGB}{79,129,189}
\definecolor{redxl}{RGB}{192, 80, 77}
\definecolor{OliveGreen}{RGB}{70,136,52}

\providecommand{\Keywords}[1]{\\\textbf{\textit{Keywords:}} #1}

%% file: include/symbols.tex
\newcommand{\pdr}[2]{\frac{\partial #1}{\partial #2}} 

\newcommand{\Set}[1]{\mathbb{#1}}

\newcommand{\mb}[1]{\mathbf{ #1}}

\newcommand{\diag}{\operatorname{diag}}

\usepackage{xcolor}